\title{Stabilizing and Solving Unique Continuation Problems by Parameterizing Data and Learning Finite Element Solution Operators}
\date{\today}
\author{
Erik Burman,
Mats G. Larson, 
Karl Larsson,
Carl Lundholm
}
\begin{document}

\maketitle

\begin{abstract}
We consider an inverse problem involving the reconstruction of the solution to a nonlinear partial differential equation (PDE) with unknown boundary conditions. Instead of direct boundary data, we are provided with a large dataset of boundary observations for typical solutions (collective data) and a bulk measurement of a specific realization. To leverage this collective data, we first compress the boundary data using proper orthogonal decomposition (POD) in a linear expansion. Next, we identify a possible nonlinear low-dimensional structure in the expansion coefficients using an autoencoder, which provides a parametrization of the dataset in a lower-dimensional latent space. We then train an operator network to map the expansion coefficients representing the boundary data to the finite element (FE) solution of the PDE. Finally, we connect the autoencoder's decoder to the operator network which enables us to solve the inverse problem by optimizing a data-fitting term over the latent space. We analyze the underlying stabilized finite element method (FEM) in the linear setting and establish an optimal error estimate in the $H^1$-norm. The nonlinear problem is then studied numerically, demonstrating the effectiveness of our approach.
\end{abstract}

\section{Introduction}

Technological advances have led to measurement resolution and precision improvements, shifting the paradigm from data scarcity to abundance. While these data can potentially improve the reliability of computational predictions, it still needs to be determined how to consistently merge the data with physical models in the form of partial differential equations (PDE). In particular, if the PDE problem is ill-posed, as is typical for data assimilation problems, a delicate balancing problem of data accuracy and regularization strength has to be solved. If the data is inaccurate, the PDE problem requires strong regularization; however, if the data is accurate, such a strong regularization will destroy the accuracy of the approximation of the PDE. Another question is how to use different types of data. Some large data sets, consisting of historical data of events similar to the one under study, can be available.  In contrast, a small set of measurements characterizes the particular realization we want to model computationally.  In this case, the former data set measures the ``experience" of the physical phenomenon, while the latter gives information on the current event to be predicted.

This is the situation that we wish to address in the present work. The objective is to construct a computational method that combines machine learning techniques for the data handling parts and hybrid network/finite element methods (FEMs) for the approximation in physical space. First, the large data set is mapped to a lower-dimensional manifold using an autoencoder or some other technique for finding low-dimensional structures, such as singular value decomposition or manifold learning. Then, we train a network to reproduce the solution map from the lower-dimensional set to the finite element space. Finally, this reduced order model solves a nonlinear inverse problem under the a priori assumption that the solution resides in a neighborhood of the lower-dimensional manifold. 

To ensure an underpinning of the developed methods, we consider the case of a unique continuation problem for a nonlinear elliptic operator. That is, given some interior measurement (or measurements on the part of the boundary), a solution is reconstructed despite lacking boundary data on the part of the boundary. Such problems are notoriously ill-posed, and using only the event data set, it is known that the accuracy of any approximation in the whole domain cannot be guaranteed due to the poor global stability \cite{ARRV09}. Indeed, in general, stability is no better than logarithmic. This means that for perturbations of order $\epsilon$, the error must be expected to be of order $|\log(\epsilon)|^{-\alpha}$ with $\alpha \in (0,1)$. In interior subdomains stability is of H\"older type, meaning that the same perturbation gives rise to an $O(\epsilon^\alpha)$ error. Computational methods can have, at best, rates that reflect this stability of the continuous problem \cite{BNO24}. To improve on these estimates additional assumptions on the solution are needed. A convenient a priori assumption is that the missing data of the approximate solution is in a $\delta$-neighbourhood of a finite $N$-dimensional space, $\mcG$, where $\delta$ is the smallest distance from the solution to $\mcG$ in some suitable topology. In this case, it is known that the stability is Lipschitz; that is, the problem has similar stability properties to a well-posed problem, and finite element methods can be designed with optimal convergence up to the data approximation error $\delta$. For linear model problems discretized using piecewise affine finite element methods with mesh parameter $h$, one can prove the error bound \cite{BO24},
\[
\|u - u_h\|_{H^1(\Omega)} \leq C_N ( h + \delta)
\]
Here, $C_N$ is a constant that depends on the dimension $N$ of the data set $\mcG$, the geometry of the available event data, and the smoothness of the exact solution. In particular, $C_N$ typically grows exponentially in $N$.

Since the size of $N$ must be kept down, there is a clear disadvantage in using the full large dataset. Indeed, for $N$ sufficiently large, the experience data will have no effect. Instead, we wish to identify a lower-dimensional structure in the high-dimensional dataset, a lower-dimensional manifold such that the data resides in a $\delta$-neighbourhood of the manifold. For this task, one may use proper orthogonal decomposition in the linear case or neural network autoencoders in the general case.

In the linear case, the data fitting problem reduces to a linear system; however, an ill-conditioned optimization problem has to be solved in the nonlinear case, leading to repeated solutions of linearized finite element systems. To improve the efficiency of this step, we propose to train a network to encode the data to an FE map, giving fast evaluation of finite element approximations without solving the finite element system in the optimization.

The approach is analyzed in the linear case with error estimates for a stabilized FEM using the reduced order model.

\pagebreak
\paragraph{Contributions.}
\begin{itemize}
\item We prove that the inverse problem with boundary data in a finite-dimensional set $\mcG$ is stable and design a method that reconstructs the solution using the reduced order basis with the same dimension as $\mcG$. We prove optimal error bounds in the $H^1$-norm for this method, where the constant of the error bound grows exponentially with the dimension of $\mcG$.
\item In the situation where a large set of perturbed random data, $\mcG_S$, from the set $\mcG$ is available, we develop a practical method for the solution of the severely ill-posed inverse problem of unique continuation, leveraging the large dataset to improve the stability properties. In order to handle nonlinearity in the PDE operator and data efficiently we adopt machine learning algorithms. The machine learning techniques are used for the following two subproblems:
\begin{enumerate}
\item Identification of a potential latent space of $\mcG$ from $\mcG_S$ to find the smallest possible space for the inverse identification.
\item Construction of a discrete approximation of the solution map 
\begin{align}
\phi_{u} : \mcG \rightarrow H^1(\Omega)
\end{align}
that gives an approximation of the finite element solution to 
\begin{align}
\mcP(u) = 0 \quad \text{in $\Omega$}, \qquad u |_{\partial \Omega} \in \mcG
\end{align}
where $\mcP$ is the nonlinear PDE operator in question. The construction is done in a way that is a special case of the approach presented in \cite{larson2024nonlinoplearn} which in turn is a special case of an even more general approach presented in \cite{sharp_data-free_2023}.
\end{enumerate}
\item The performance of the combined finite element/machine learning approach is assessed against some academic data assimilation problems.
\end{itemize}

\paragraph{Previous Works.} 
The inverse problem we consider herein is of unique continuation type. There are many types of methods for this type of problem. In the framework we consider the earliest works considered quasi-reversibility \cite{Bour05}. The stabilized method we consider for unique continuation was first proposed in \cite{Bu13,Bu14} and \cite{BHL18}. More recent works use residual minimization in dual norm \cite{CIY22,DMS23,BHLL23}. The optimal error estimates for unique continuation with a trace in a finite-dimensional space was first considered for Dirichlet trace in \cite{BO24} and for Neumann trace in \cite{BOZ24}. The idea of combining unique continuation in finite-dimensional space with collective data was first proposed in \cite{Cor22, BJL24} using linear algebra methods for the compression and direct solution of the linear unique continuation problem. Low rank solvers for the solution of inverse problems have also been designed in \cite{RFL24} using proper orthogonal decomposition.

In recent years, significant advancements have been made in utilizing machine learning for solving PDEs \cite{MR4582511,MR4467422,bookDLnCP}. One important aspect is how to suitably and efficiently represent the learned solution \cite{MR4645137,MR4756922,PATEL2024116536,bachmayr2024}. An application that comes very natural in the context of neural networks is the derivation of reduced order models \cite{MR4619374,MR4549101}.

These developments are very useful in the context of inverse problems, where they have been utilized in both data- and model-driven inverse problems. In \cite{MR4260002} a combination of networks and traditional methods is considered to recover the diffusion coefficient in Poisson's and Burgers' equations. In general, the same is done in \cite{drad073} with the traditional method being FEM and the equations being elliptic and parabolic. Yet more examples of applying deep learning to this type of problem are given by \cite{CEN2023112427,BERG2021100008}. Anyone interested in the application of deep learning for PDE-solving has undoubtedly encountered Physics-Informed Neural Networks (PINNs) \cite{MR3881695} which are also used for inverse problems. Works not involving deep learning but still relevant are \cite{MR2684726, MR3341244} where projection-based reduced order models for inverse problems are presented. Taking the step to also include machine learning, some of the authors from the previous works give an overview of this mix in \cite{MR4298221}. Another overview of using machine learning for inverse problems is given by \cite{Arridge_2019}. In \cite{MR4832359}, an approach to reduce the error introduced by using operator learning for inverse problems is studied. As a contrast, \cite{M1338460} instead uses machine learning to reduce the error introduced by approximate forward models. Focusing instead on the other side of the computational spectrum, i.e., speed, \cite{MR4597397} presents a physics-based deep learning methodology with applications to optimal control. The work \cite{DASGUPTA2024116682} presents a modular machine learning framework for solving inverse problems in a latent space. Although using different techniques and approaches, this general description also holds for what we present here.

\paragraph{Comparison between this Work and Others.}
To the best of our knowledge, there are no previous works in the literature with a similar theoretical foundation addressing this type of data assimilation problem. Notably, the importance of the finite dimensionality of boundary data for stability, and thus the necessity to reduce the dimension of measured population data as much as possible, has only been considered in \cite{BJL24} using classical methods. Here, we apply autoencoders and operator learning to this problem for the first time. To provide context on how other approaches might perform compared to ours, we note that the stabilized method proposed here yields optimally converging approximations, contingent on the properties of the finite element (FE) space and the stability of the inverse problem. This is not the case for Tikhonov regularized approaches, where discretization is typically applied without further consideration of numerical stability. Bayesian inference methods usually share a similar shortcoming, depending on the choice of prior. We note that in our computational examples, stabilization was not necessary, indicating that the space discretization was sufficiently well-resolved. The use of PINNs in this context leads to a formulation where the strong form of the PDE is minimized. This presents complications, as boundary conditions are generally difficult to impose in network approximations, particularly on the finite-dimensional subspace. Additionally, there appears to be no way to eliminate spurious local minima in the PDE approximation when using PINNs. In our case, since we minimize a convex functional over the finite element space for all parameter values, the space discretization part does not suffer from this defect. Nonetheless, the optimization could converge to local minima when networks approximate the operator, a common shortcoming with network approximation methods.

Concerning the approach to learning the physical model, the method we use is presented in detail in \cite{larson2024nonlinoplearn}, where the focus lies on the method itself as opposed to here, where the focus is on applying it to inverse problems. In \cite{larson2024nonlinoplearn}, a comparison with other machine learning approaches is made so we refer to this work for details and only give a brief characterization here:
\begin{itemize}
\item The core concept is to learn a finite element solution operator. The output is thus an approximation of a finite element solution. An advantage of this is that the method can be combined with standard FEM for support and enhancement in both theory and practice.
\item A multilayer perceptron (MLP) is used to approximate the solution operator.
\item The finite element part enters by using a mesh and basing the loss function on an energy functional that when minimized gives the FE-solution. An alternative is to use the weak residual, which although is more general seems to be more computationally costly.
\item The input to the network is a parameterization of problem data, e.g., right-hand side functions and boundary values. The network thus learns a parameterized family of PDE-problems as opposed to only a single problem.
\item The method is by default data-free, meaning no input-output data pairs. Instead input is sampled from probability distributions. However, the method allows for the incorporation of data sets as demonstrated here.
\end {itemize}
None of these individual features is new in physics-based deep learning, but to the best of our knowledge, this specific combination of them has not been studied outside of this work and \cite{larson2024nonlinoplearn}.

Looking through the literature for other works employing deep learning for unique continuation problems, we find two different types. The first type presented in \cite{ivagnes_mlpipeline_romips_2023} is a data-driven approach for parameterizing both boundary conditions and solutions for flow problems in a cylinder. The velocity distribution is observed in a downstream cross section and the objective is to find a matching inlet profile. Although numerical PDE simulations are used to generate data, this learning approach is physics-free in the contextual sense. The second more common type uses PINNs and seeks the full solution given pointwise observations in a subdomain of the solution domain. In \cite{mishra_pinnip_2021} four standard linear problems (Poisson's equation, the heat and wave equations, and Stokes flow) are considered. For a nonlinear problem, see \cite{escapil_hanalysis_2023} where the 2D Navier--Stokes equations are studied. The work \cite{nechita_helmholtzpinn_2023} again considers a linear problem, the Helmholtz equation. A drawback of these PINNs-based works is that the neural networks only learn a \emph{single} PDE solution during training. Comparing these works with ours, we see some differences with our approach: First, it is physics-based in contrast to \cite{ivagnes_mlpipeline_romips_2023}. Second, it allows for learning an entire class of related PDE solutions as opposed to a single one as in the PINNs-based works. With these points in mind, we think that our deep learning approach to unique continuation problems provides a novelty that can further the field.

\paragraph{Outline.} In Section 2, we introduce the model problem and the finite element discretization; in Section 3, we present and prove stability and error estimates for a linear model problem; in Section 4, we develop a machine learning-based approach for solving the inverse problem; in Section 5, we present several numerical examples illustrating the performance of the method for various complexity of the given 
set of boundary data; and in Section 6 we summarize our findings and discuss future research directions.

\paragraph{Notation.}
\begin{itemize}
\item We use $\lesssim$ to mean that there is a positive constant in the inequality (typically on the right-hand side).
\item For a bounded domain or a set of mesh features $D$, we denote by $\| \cdot \|_D$ and $(\cdot, \cdot)_D$ the standard $L^2(D)$-norm and inner product, respectively. Some common instances in the text are $D = \omega, \Omega, \partial \Omega, \mcF_h$. 
\item We denote by $\| \cdot \|_{\IR^N}$ the standard absolute value for vectors in $\IR^N$. We note that it should not be confused with the $L^2$-notation in the previous point. The reason for using the notation $\| \cdot \|_{\IR^N}$ is because we use it on the expansion coefficients of functions, thus making expressions involving norms on both functions and their coefficients more general and consistent.
\item For a positive-definite bilinear form $B$ we denote the corresponding norm by $\| \cdot \|_B$, i.e., $\| v \|_B^2 := B(v, v)$. An example from the text is $B = m_h$.
\end{itemize}

\section{Inverse Problem and Finite Element Method}
\subsection{Inverse Problem}

Let $\Omega$ be a domain in $\IR^d$, $\omega \subset \Omega$ a subdomain, and consider the minimization problem
\begin{align}\label{eq:inv-prob}
\boxed{ \inf_{v \in V} \frac12 \| u_0 - v \|_{L^2(\omega)}^2 \quad \text{subject to} \quad \mcP(v) = 0 \quad \text{in $\Omega$} }
\end{align}
where $\mcP(\cdot)$ is a nonlinear second order differential operator and 
$u_0$ is an observation of the solution in the subdomain $\omega$.
Note that we do not have access to boundary conditions for the partial differential equation; we only know that $\mcP(u) = 0$ in $\Omega$, and 
thus, the problem is, in general, ill-posed.

Assume that we have access to a dataset 
\begin{align}
\mcG \subset H^{1/2} (\partial \Omega)
\end{align}
of observed Dirichlet data at the boundary. The dataset 
$\mcG$ may have different properties, but here we will assume 
that it is of the form
\begin{equation}\label{def:G}
\mcG = \Bigl\{ g \in H^{1/2}(\partial \Omega) \,|\, 
g = \sum_{i=1}^N a_i \varphi_i, \ a_i \in I_i \Bigr\}
\end{equation}
where $I_i$ are bounded intervals and $\varphi_i \in H^{1/2}(\partial \Omega)$. Below we will also consider access to a finite set $\mcG_S \subset \mcG$ of samples from $\mcG$,
\begin{equation}\label{def:GS}
\mcG_S = \{ g_i \,|\, i \in I_S\}
\end{equation} 
where $I_S$ is some index set.

Including $v|_{\partial \Omega} \in \mcG$ as a constraint leads to
\begin{align}\label{eq:inverse}
\boxed{ \inf_{v \in V} \frac12 \| u_0 - v \|_{L^2(\omega)}^2 \quad \text{subject to} \quad \mcP(v) = 0 \quad \text{in $\Omega$}, \quad v|_{\partial \Omega} \in \mcG}
\end{align}
A schematic illustration of a problem of form \eqref{eq:inverse} is given in Figure~\ref{fig:schematic-problem-setting}.

\begin{figure}
  \centering
  \includegraphics[width=0.5\linewidth]{./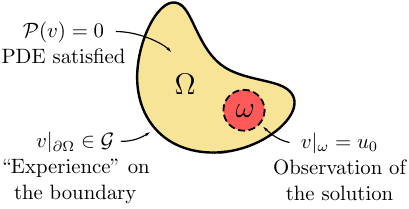}
  \caption{Schematic view of the minimization problem setup where we seek the $v\in V$ that minimizes the error in the observation of the solution, while under a PDE constraint with boundary conditions according to experience.}
  \label{fig:schematic-problem-setting}
\end{figure}

\subsection{Finite Element Method}

Let $V_h$ be a finite element space on a quasi-uniform partition $\mcT_h$ of $\Omega$ into shape regular elements with mesh 
parameter $h\in (0,h_0]$ and assume that there is an interpolation operator $\pi_h : H^1(\Omega) \rightarrow V_h$ and a constant such that for all $T \in \mcT_h$,  
\begin{equation}\label{eq:interpol}
\| v - \pi_h v \|_{H^m(T)} \lesssim h^{k-m}\| v \|_{H^k(N(T))}
\end{equation}
for $0\leq m \leq k \leq p+1$. Here $N(T)$ is the union of all elements that share a node with $T$. 

The finite element discretization of (\ref{eq:inverse}) takes the form
\begin{align}\label{eq:inverse-fem}
\boxed{ \inf_{v \in V_h} \frac12 \| u_0 - v \|_{L^2(\omega)}^2 \quad \text{subject to} \quad (\mcP(v),w)_{H^{-1}(\Omega), H^1(\Omega)} = 0  \quad  \forall w \in V_{h,0},
\quad v|_{\partial \Omega} \in \pi_h \mcG  }
\end{align}
where $ V_{h,0} = V_h \cap H^1_0(\Omega)$.

\section{Analysis for a Linear Model Problem}

In this section, we present theoretical results for a linear model problem.  We show that the finite dimensionality leads to a well-posed continuous problem, which may, however, have insufficient stability that may cause problems in the corresponding discrete problem.  We, therefore, introduce a stabilized formulation that retains the stability properties from the continuous problem, and then we prove error estimates.  

\subsection{The Continuous Problem}
Consider the linear model problem 
\begin{align}
\mcP u = 0\quad \text{in $\Omega$},\qquad
u = g \quad \text{on $\partial \Omega$}
\end{align}
where $\mcP = - \Delta$  and 
\begin{align}
g \in \mcG = \Bigl\{\sum_{n=1}^N a_n g_n \,|\, a_n \in \IR \Bigr\}
\end{align}
where the functions $\{g_n\}_{n=1}^N$ are linearly independent 
on $\partial \Omega$. Then with 
\begin{align}\label{eq:basis}
\mcP \varphi_n = 0\quad \text{in $\Omega$},\qquad
\varphi_n = g_n \quad \text{on $\partial \Omega$}
\end{align}
we may express $u$ as the linear combination
\begin{align}
u = \sum_{n=1}^N \hatu_n \varphi_n
\end{align}
where $\hatu \in \IR^N$ is the coefficient vector. The inverse problem (\ref{eq:inv-prob}) is then equivalent to 
computing the $L^2(\omega)$-projection of $u_0$ on 
$\mcU_N = \text{span}\{\varphi_n\}_{n=1}^N$,
\begin{equation}\label{eq:projexactmodes}
\boxed{u_N \in \mcU_N:\qquad    (u_N,w)_{\omega} = (u_0,w)_{\omega} \qquad \forall w \in \mcU_N}
\end{equation}
This is a finite-dimensional problem, and therefore, existence follows from uniqueness.  
To prove uniqueness, consider two solutions $u_1$ and $u_2$, we then have
\begin{equation}
(u_1-u_2,w)_{\omega} =0 \qquad \forall w \in \mcU_N
\end{equation}
and taking $w= u_1 - u_2$ gives
\begin{equation}
\|u_1-u_2\|_{\omega} =0
\end{equation}
By unique continuation for harmonic functions, we conclude that $u_1 - u_2$ is zero on the boundary and 
therefore $u_1 = u_2$ since the set $\{g_n\}_{n=1}^N$ is linearly independent on 
$\partial \Omega$.   It follows that  $\{\varphi_n\}_{n=1}^N$ is linearly independent on $\omega$ and by 
finite dimensionality, there is a constant (it is $\lambda_{min}^{-1/2}$), such that 
\begin{align}\label{eq:stab-exact}
\boxed{ \| \hatv \|_{\IR^N} \lesssim \| v \|_{\omega} \qquad v \in \mcU_N }
\end{align}
Note, however, that the constant may be huge, reflecting the often near 
ill-posed nature of an inverse problem.  

\subsection{The Discrete Problem}

In practice, only an approximation of the basis $\{\varphi_n\}_{n=1}^N$ is available,  
since we observe data on the boundary and must solve for an approximate basis.  
Assuming that we compute an approximate basis $\{\varphi_{n,h}\}_{n=1}^N$ using 
Nitsche's method with continuous piecewise linears $V_h$,  defined on a triangulation 
$\mcT_h$ of $\Omega$, 
\begin{align}\label{eq:nitsche}
\boxed{\varphi_{n,h} \in V_h: \qquad a_h(\varphi_{n,h},v) = l_{h, \varphi_n}(v)\qquad \forall v \in V_h}
\end{align}
where the forms are defined by
\begin{align}
a_h(v,w) &= (\nabla v, \nabla w)_\Omega - (\nabla_n v,  w)_{\partial \Omega} - (v, \nabla_n w)_{\partial\Omega} + \beta h^{-1} (v,w)_{\partial \Omega} 
\\
l_{h,g} (v) &= - (g,\nabla_n v)_{\partial\Omega} + \beta h^{-1} (g,  v)_{\partial \Omega} 
\end{align}
with $g$ the given Dirichlet data on $\partial \Omega$,  we have the error estimates 
\begin{align}\label{eq:basis-error-est}
\|\varphi_n - \varphi_{n,h} \|_\Omega 
+ h \|\nabla(\varphi_n - \varphi_{n,h})\|_\Omega 
\lesssim h^2 \|\varphi_n\|_{H^2(\Omega)} \lesssim h^2 \| g_n \|_{H^{3/2}(\partial \Omega)}
\end{align}
provided the regularity estimate $\| \varphi_n \|_{H^2(\Omega)} \lesssim \| g_n \|_{H^{3/2}(\partial \Omega)}$ 
holds, which is the case for convex or smooth domains.  

Next, we define the operators
\begin{align}\label{eq:I}
I: \IR^N \ni \hatv &\mapsto \sum_{n=1}^N \hatv_{n} \varphi_{n} \in \mcU_{N}
\\ \label{eq:Ih}
I_h: \IR^N \ni \hatv &\mapsto \sum_{n=1}^N \hatv_{n} \varphi_{n,h} \in \mcU_{N,h}
\end{align}
to represent linear combinations given coefficient vectors, where $\mcU_{N,h} = \text{span}\{\varphi_{n,h}\}_{n=1}^N$. By composing $I$ and $I_h$ 
with the coefficient extraction operator $\widehat{\cdot}$, we note that $v = I \hatv$ for $v \in \mcU_{N}$ 
and $v = I_h \hatv$ for $\mcU_{N,h}$. We also note that $I_h \hatv$ is the Galerkin 
approximation  defined by (\ref{eq:nitsche}) of $v = I \hatv$,  since $\varphi_{n,h}$ is the Galerkin approximation 
of $\varphi_n$ for $n=1,\dots,N$,  and we have the error estimate
\begin{align}\label{eq:Ih-error}
\| (I  - I_h) \hatv\|_\Omega + h \| \nabla (I  - I_h) \hatv \|_\Omega + h^{1/2}  \| (I- I_h) \hatv \|_{\partial \Omega}
&\lesssim h^2 \| \hatv \|_{\IR^N}  
\end{align}
The estimate (\ref{eq:Ih-error}) follows directly using the Cauchy--Schwarz inequality and 
the error estimates (\ref{eq:basis-error-est}) for the approximate basis 
\begin{align}
\| \nabla^m (v - I_h \hatv) \|^2_\Omega  &=  \Big( \sum_{n=1}^N v_n^2 \Big) \Big( \sum_{n=1}^N \|\nabla^m( \varphi_n - \varphi_{n,h}) \|^2_\Omega \Big)
\\
&\lesssim h^{2(2-m)} \Big( \sum_{n=1}^N v_n^2 \Big) \Big(\sum_{n=1}^N \| g_n \|^2_{H^{3/2}(\partial \Omega)} \Big)
\end{align}
with $m=0,1$.

Now if we proceed as in (\ref{eq:projexactmodes}) with the modes $\varphi_n$ replaced by the approximate 
modes $\varphi_{n,h}$, we can not directly use the same argument as in the continuous case to show that there 
is a unique solution since the discrete method does not possess the unique continuation property, and it doesn't 
appear easy to quantify how small the mesh size must be to guarantee that the bound (\ref{eq:stab-exact}) holds 
on $\mcU_{N,h}$. %

To quantify the discrete stability, note that the constant in (\ref{eq:stab-exact}) is  
characterized by the Rayleigh quotient 
\begin{align}
\lambda_{\text{min}} = \min_{\hatv \in \IR^N} \frac{\|I \hatv\|^2_\omega}{\| \hatv \|
^2_{\IR^N}}
\end{align}
and for the corresponding discrete estimate 
\begin{align}\label{eq:stab-discrete}
\| \hatv \|^2_{\IR^N} \lesssim  \| v \|^2_\omega
\end{align}
we instead have the constant 
\begin{align}
\lambda_{h,\text{min}} = \min_{\hatv \in \IR^N} \frac{\|I_h \hatv\|^2_\omega}{\| \hatv \|
^2_{\IR^N}}
\end{align}
Using the triangle inequality and the error estimate (\ref{eq:basis-error-est})  we have 
\begin{align}
\| I_h \hatv \|_{\omega} \geq \| I \hatv \|_{\omega} - \| (I_h - I) \hatv \|_{\omega} \geq \| I \hatv \|_{\omega} - c h^2 \| \hatv \|_{\IR^N}
\end{align}
and thus we may conclude that 
\begin{align}
\lambda_{\text{min},h} \geq (\lambda_{\text{min}}^{1/2} - c h^2)^2 \geq c \lambda_{\text{min}}
\end{align}
for $h< h_0$ with $h_0$ small enough. Thus for $h$ small enough the discrete bound (\ref{eq:stab-discrete}) holds but we note that the 
precise characterization of how small $h$ has to be appears difficult.

To handle this difficulty, let us instead consider the stabilized form
\begin{align}\label{eq:mh}
\boxed{m_h(v,w) = (v,w)_{\omega} + s_{h,\partial \Omega}(v,w)+ s_h(v,w)}
\end{align}
Here
\begin{align}
s_{h,\partial \Omega}(v,w) = h^{-1} (v- I \hatv,  w - I \hatw)_{\partial \Omega} + h (\nabla_T (v - I_h \hatv) , \nabla_T (w - I \hatw))_{\partial \Omega} 
\end{align}
where $\nabla_T = (I- n\otimes n) \nabla$ is the tangential derivative on $\partial \Omega$ with $n$ denoting the unit normal to $\partial \Omega$.
 The form $s_h$ is the standard 
normal gradient jump penalty term
\begin{equation}\label{eq:sh}
s_h(v,w) = \sum_{F \in \mcF_h} h([\nabla v], [\nabla w])_F 
\end{equation}
where $\mcF_h$ is the interior faces in the mesh $\mcT_h$.  %
The role of the form $s_{h,\partial \Omega}$ is to give control of the distance of the approximation to the finite dimensional set $\mathcal{G}$ in the $H^{1/2}(\partial \Omega)$-norm. In principle
the form 
\begin{equation}
s_{\partial \Omega}(v,w) = (v - I \hatv,   w - I \hatw)_{H^{1/2}(\partial \Omega)}
\end{equation}
could be used directly, but
to obtain a stabilization term that is easier to handle in practice we note that by the Galigliardo-Nirenberg inequality, $\|v\|_{H^{1/2}(\partial \Omega)} \lesssim \|v\|_{L^2(\partial \Omega)}^{1/2} \|v\|_{H^{1}(\partial \Omega)}^{1/2}$, there holds $s_{\partial \Omega}(v,v) \lesssim s_{h,\partial \Omega}(v,v)$, which is sufficient for stability.

\subsection{Error Estimates}

Our first result is that the additional stabilization terms in $m_h$ ensure that we have stability for the discrete problem 
similar to (\ref{eq:stab-exact}) that holds for the exact problem.
\begin{lem} Let $m_h$ be defined by (\ref{eq:mh}).  Then, there is a constant, depending on $N$ but not $h$, such that
\begin{align}\label{eq:stab-mh}
\boxed{\| \hatv \|_{\IR^N} \lesssim \| v \|_{m_h} \qquad v \in \mcU_{N,h}}
\end{align}
\end{lem}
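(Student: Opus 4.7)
The plan is to lift the continuous stability estimate \eqref{eq:stab-exact} to the discrete space $\mcU_{N,h}$, using the stabilization terms in $m_h$ to measure the defect between $v = I_h \hatv$ and its continuous harmonic counterpart $I \hatv \in \mcU_N$. The point is that $s_{\partial \Omega}(v,v)^{1/2}$ is by construction the $H^{1/2}(\partial \Omega)$-norm of $v - I\hatv$, while $s_h(v,v)^{1/2}$ will control the distributional Laplacian of $v$ in the $H^{-1}(\Omega)$-norm; together they are strong enough to bound $v - I\hatv$ in $H^1(\Omega)$ via elliptic regularity for the Dirichlet Laplacian.

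First, applying \eqref{eq:stab-exact} to $I\hatv \in \mcU_N$ and inserting $v$ via the triangle inequality gives
\[
\|\hatv\|_{\IR^N} \lesssim \|I\hatv\|_\omega \le \|v\|_\omega + \|v - I\hatv\|_\omega.
\]
Set $w := v - I\hatv \in H^1(\Omega)$. Since $I\hatv$ is harmonic in $\Omega$, the distributional Laplacian of $w$ equals $\Delta v$, and the boundary trace of $w$ is $(v - I\hatv)|_{\partial\Omega}$. Standard continuous dependence for the Dirichlet problem then yields
\[
\|w\|_\omega \le \|w\|_{H^1(\Omega)} \lesssim \|\Delta v\|_{H^{-1}(\Omega)} + \|v - I\hatv\|_{H^{1/2}(\partial\Omega)}.
\]

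The main (and essentially the only nontrivial) step is the CIP-type bound $\|\Delta v\|_{H^{-1}(\Omega)} \lesssim s_h(v,v)^{1/2}$ for continuous piecewise affine $v \in V_h$. This is obtained by element-wise integration by parts: for any test function $\varphi \in H_0^1(\Omega)$, $\langle -\Delta v, \varphi \rangle$ reduces to a sum over interior faces of the normal-derivative jumps of $v$ tested against $\varphi$, and the face trace inequality $\|\varphi\|_F^2 \lesssim h^{-1}\|\varphi\|_{N(F)}^2 + h\|\nabla\varphi\|_{N(F)}^2$ combined with Cauchy--Schwarz gives the stated bound, exactly as in the classical CIP analysis. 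Substituting back, recognising $s_{\partial \Omega}(v,v)^{1/2} = \|v - I\hatv\|_{H^{1/2}(\partial\Omega)}$, and using the definition of $\|\cdot\|_{m_h}$ closes \eqref{eq:stab-mh}. The stabilization thus internalises both the boundary defect and the interior harmonicity defect of $v$, and, crucially, no smallness condition on $h$ tied to the possibly huge constant of \eqref{eq:stab-exact} is required, in contrast with the fragile situation discussed in Remark~\ref{rem:discstab}.
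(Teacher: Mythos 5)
Your overall strategy---reducing to a bound on the defect $v - I\hatv$ and controlling it by the two stabilization terms---is the same as the paper's, and your first step ($\|\hatv\|_{\IR^N}\lesssim \|v\|_\omega + \|v - I\hatv\|_\omega$ via \eqref{eq:stab-exact}) matches the paper exactly. The gap is in your key step: the claimed bound $\|\Delta v\|_{H^{-1}(\Omega)} \lesssim s_h(v,v)^{1/2}$ for continuous piecewise affine $v$ is false as stated, and the argument you sketch does not deliver it. After elementwise integration by parts you must estimate $\sum_{F\in\mcF_h}([\nabla_n v],\varphi)_F$ for an arbitrary $\varphi\in H^1_0(\Omega)$; Cauchy--Schwarz with the weights $h^{1/2},h^{-1/2}$ leaves you needing $\sum_F h^{-1}\|\varphi\|_F^2\lesssim\|\varphi\|^2_{H^1(\Omega)}$, but the trace inequality only gives $h^{-1}\|\varphi\|_F^2\lesssim h^{-2}\|\varphi\|_T^2+\|\nabla\varphi\|_T^2$, and the $h^{-2}$ term cannot be removed. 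Concretely, take $v=\pi_h w$ for a smooth $w$ with $\Delta w\neq 0$: then $s_h(v,v)^{1/2}\lesssim h\|w\|_{H^2(\Omega)}\to 0$ while $\|\Delta \pi_h w\|_{H^{-1}(\Omega)}\to\|\Delta w\|_{H^{-1}(\Omega)}>0$. In the classical CIP analysis the jumps are always tested against a difference of the form $\varphi-\pi_h\varphi$, whose approximation power supplies the missing factor of $h$; you have no such factor here.

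The missing ingredient is the Galerkin orthogonality of the defect. For $v\in\mcU_{N,h}$ one has $v-I\hatv=(I_h-I)\hatv$, which is precisely the Nitsche error of the exact basis combination, so $a_h((I-I_h)\hatv,w)=0$ for all $w\in V_h$. The paper exploits this through an Aubin--Nitsche duality argument: testing against the dual solution $\phi$ of \eqref{eq:dual}, subtracting $\pi_h\phi$, and integrating by parts leaves face and boundary terms tested against $\phi-\pi_h\phi$, which contribute $h^{3/2}\|\phi\|_{H^2}$ per face and hence give $\|(I-I_h)\hatv\|_\Omega \lesssim h\bigl(\|v\|_{s_h}+\|v\|_{h,\partial\Omega}\bigr)$---even stronger, by a factor of $h$, than what you need. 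Your continuous-dependence estimate for the Dirichlet problem is fine in itself, but the residual $\Delta v$ in $H^{-1}(\Omega)$ cannot be absorbed by the jump stabilizer alone; you must route the estimate through the orthogonality of $(I-I_h)\hatv$ as the paper does.
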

\begin{proof} For $v \in \mcU_{N,h}$ we get by using the stability (\ref{eq:stab-exact}) on $\mcU_N$,  adding 
and subtracting $I_h \hatv$,  and employing the triangle inequality,  
\begin{align}\label{eq:stab-h-prf-a}
\| \hatv \|_{\IR^N} \lesssim \| I \hatv \|^2_\omega \lesssim  \| I_h \hatv \|^2_\omega +   \| ( I - I_h) \hatv \|^2_\omega
=   \| v \|^2_\omega +   \| (I-I_h) \hatv \|^2_\omega
\end{align}
where we finally  used the identity $I_h \hatv = v$,  which holds since $v \in \mcU_{N,h}$.  Next, we bound the second term using the stabilizing terms in $m_h$.  To that end, we observe that we have the orthogonality
\begin{align}
a_h((I - I_h) \hatv,  w) = 0\qquad \forall w \in V_h
\end{align}
since the discrete basis is, a Galerkin projection (\ref{eq:nitsche}) of the exact basis with respect to the Nitsche form $a_h$.  
Using the dual problem 
\begin{align}\label{eq:dual}
\mcP \phi = \psi \quad \text{in $\Omega$}, 
\qquad 
\phi = 0 \quad \text{on $\partial \Omega$}
\end{align} 
we obtain by partial integration followed by Galerkin orthogonality
\begin{align}
( ( I - I_h) \hatv, \psi)_\Omega = ( ( I - I_h) \hatv, \mcP \phi)_\Omega = a_h ( ( I - I_h) \hatv, \phi) =  a_h ( ( I - I_h) \hatv, \phi - \pi_h \phi)  
\end{align}
where $\pi_h: H^1(\Omega) \rightarrow V_h$ is the interpolation operator. Performing another partial integration, we get
\begin{align}
&a_h ( ( I - I_h) \hatv, \phi - \pi_h \phi)
\\
&= ([\nabla_n(I - I_h)\hatv],\phi - \pi_h \phi)_{\mcF_h} - ((I - I_h)\hatv,  \nabla_n( \phi - \pi_h \phi) )_{\partial \Omega}
\\
&\lesssim (  h^{3/2} \|[\nabla_n(I - I_h)\hatv]\|_{\mcF_h}  + h^{1/2} \|(I - I_h)\hatv\|_{\partial \Omega}  ) \| \phi \|_{H^2(\Omega)}
\end{align}
where we used the standard trace inequality $\|w\|^2_{\partial T} \lesssim h^{-1} \| w \|_T^2 + h \| \nabla w \|^2_T$ for $w \in H^1(T)$  on an element $T\in \mcT_h$. Finally, using the elliptic regularity $ \| \phi \|_{H^2(\Omega)} \lesssim \| \psi \|_{\Omega}$, combining the results, and taking $\psi = ( I - I_h) \hatv$, we get
\begin{align}
\|( I - I_h) \hatv \|_\Omega &\lesssim  h^{3/2} \|[\nabla_n(I - I_h)\hatv]\|_{\mcF_h}  + h^{1/2} \|(I - I_h)\hatv\|_{\partial \Omega}
\\
&\lesssim h (\| v \|_{s_h} +  \| v \|_{s_{h,\partial \Omega} } ) 
\end{align}
which combined with (\ref{eq:stab-h-prf-a}) directly gives the desired estimate. 
\end{proof}

We define the stabilized projection,
\begin{align}\label{eq:proj-stab}
\boxed{u_{N,h} \in \mcU_{N,h}: \qquad m_h(u_{N,h},v) = (u_0,v)_{\omega} \qquad \forall v \in \mcU_{N,h}} 
\end{align}

We then have the following error estimate for the stabilized projection with approximate basis functions.
 
\begin{prop}\label{prop:energy-est} Let $u_N\in \mcU_N$ be defined by (\ref{eq:projexactmodes}) and $u_{N,h} \in \mcU_{N,h}$ be defined by 
(\ref{eq:proj-stab}). Then, there is a constant such that,
\begin{equation}\label{eq:err-est}
\boxed{
\| u_N - u_{N,h}\|_{m_h} \lesssim h \|u_0\|_\omega
}
\end{equation}
\end{prop}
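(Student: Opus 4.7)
The plan is to combine a Strang-type split with the trivial coercivity of $m_h$ on $\mcU_{N,h}$ (since $m_h$ is by construction an inner product, $m_h(v,v) = \|v\|_{m_h}^2$) and to exploit the basis error estimate \eqref{eq:Ih-error} together with the stability \eqref{eq:stab-mh} just proved. Since $u_N \notin \mcU_{N,h}$, I first introduce the lift $\tilde u_N := I_h \hat{u}_N \in \mcU_{N,h}$ and split
\begin{equation*}
\|u_N - u_{N,h}\|_{m_h} \leq \|u_N - \tilde u_N\|_{m_h} + \|\tilde u_N - u_{N,h}\|_{m_h}.
\end{equation*}
Throughout, I use the a priori bound $\|\hat{u}_N\|_{\IR^N} \lesssim \|u_N\|_\omega \leq \|u_0\|_\omega$ coming from \eqref{eq:stab-exact} together with the $L^2(\omega)$-projection property of $u_N$.

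For the interpolation error $u_N - \tilde u_N = (I - I_h)\hat{u}_N$, the $L^2(\omega)$ part is controlled directly by \eqref{eq:Ih-error}. The boundary seminorm is $\|(I_h - I)\hat{u}_N\|_{H^{1/2}(\partial\Omega)}^2$, which I would bound via the trace inequality $\|\cdot\|_{H^{1/2}(\partial\Omega)} \lesssim \|\cdot\|_{H^1(\Omega)}$ applied to $(I-I_h)\hat{u}_N$, giving the $O(h)$ rate from \eqref{eq:Ih-error}. For $s_h$, I use that $u_N$ is smooth (it is a linear combination of harmonic basis functions in $H^2(\Omega)$ and hence has vanishing gradient jumps across interior faces), so $s_h$ of the difference reduces to $s_h((I_h - I)\hat{u}_N, (I_h - I)\hat{u}_N)$, which a standard face-trace estimate bounds by $\|\nabla(I-I_h)\hat{u}_N\|_\Omega^2 + h^2\|u_N\|_{H^2(\Omega)}^2 \lesssim h^2\|u_0\|_\omega^2$.

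For the discrete Galerkin part, set $e_h := \tilde u_N - u_{N,h} \in \mcU_{N,h}$ and test the defining equation against $e_h$ to get
\begin{equation*}
\|e_h\|_{m_h}^2 = m_h(\tilde u_N, e_h) - (u_0, e_h)_\omega.
\end{equation*}
I would insert the exact projection equation \eqref{eq:projexactmodes} via the rewriting $(u_0, e_h)_\omega = (u_0, I\hat{e}_h)_\omega + (u_0, (I_h - I)\hat{e}_h)_\omega = (u_N, I\hat{e}_h)_\omega + (u_0, (I_h - I)\hat{e}_h)_\omega$, using $I\hat{e}_h \in \mcU_N$. Expanding $m_h(\tilde u_N, e_h)$ into its three seminorms and substituting $\tilde u_N = u_N - (I - I_h)\hat{u}_N$, the leading $(u_N, I\hat{e}_h)_\omega$ contributions cancel and each remaining term factors through $(I - I_h)$ applied to either $\hat{u}_N$ or $\hat{e}_h$. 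Using \eqref{eq:Ih-error}, the trace inequality for the $s_{\partial\Omega}$-factor involving $(I-I_h)\hat{u}_N$, and the stability lemma \eqref{eq:stab-mh} to absorb $\|\hat{e}_h\|_{\IR^N}$ into $\|e_h\|_{m_h}$, each term is $\lesssim h\|u_0\|_\omega \|e_h\|_{m_h}$, and dividing by $\|e_h\|_{m_h}$ closes the estimate.

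The delicate point is the cross-term $s_{\partial\Omega}(\tilde u_N, e_h)$: one needs an $O(h)$ bound on $\|(I - I_h)\hat{u}_N\|_{H^{1/2}(\partial\Omega)}$, whereas \eqref{eq:Ih-error} directly supplies only $O(h^{3/2})$ in $L^2(\partial\Omega)$ and $O(h)$ in $H^1(\Omega)$. The correct $H^{1/2}$ rate is recovered only by going through the trace theorem from $H^1(\Omega)$. The bookkeeping of which factor in each bilinear cross-term receives the approximation power and which is absorbed into $\|e_h\|_{m_h}$ is the other place where care is needed; everything else is a direct application of the preceding lemma and error estimate.
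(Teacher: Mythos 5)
Your proposal is correct and follows essentially the same route as the paper's proof: the identical splitting via $I_h\hatu_N$, the consistency term bounded through \eqref{eq:Ih-error} and $\|\hatu_N\|_{\IR^N}\lesssim\|u_0\|_\omega$, and the discrete part handled by combining the defining equation \eqref{eq:proj-stab}, the vanishing of the stabilization terms on $\mcU_N$, the exact projection property \eqref{eq:projexactmodes} to subtract $I\hate_h\in\mcU_N$, and the stability bound \eqref{eq:stab-mh} to absorb $\|\hate_h\|_{\IR^N}$ into $\|e_h\|_{m_h}$. The only substantive difference is that you make explicit how the $H^{1/2}(\partial\Omega)$ and gradient-jump components of $\|(I-I_h)\hatu_N\|_{m_h}$ are controlled (via the trace theorem from $H^1(\Omega)$ and a face-trace estimate using the smoothness of $u_N$), details the paper leaves implicit when it invokes \eqref{eq:Ih-error}.
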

\begin{proof}[Proof of Proposition \ref{prop:energy-est}] Using the triangle inequality 
\begin{align}\label{eq:err-est-prf-aa}
\| u_N - u_{N,h} \|_{m_h} 
\leq 
\| u_N - I_h \hatu_N \|_{m_h} + \| I_h \hatu_N - u_{N,h} \|_{m_h}
\end{align}
Here the first term can be directly estimated using (\ref{eq:Ih-error}), 
\begin{equation}\label{eq:err-est-prf-bb}
\| u_N - I_h \hatu_N \|_{m_h} =  \| (I -  I_h) \hatu_N \|_{m_h} \lesssim h \| \hatu_N \|_{\IR^N}  \lesssim h \| u_0 \|_\omega
\end{equation}
since for $v \in \mcU_N$ we have $v = I \hatv$ and using the stability estimate (\ref{eq:stab-exact}) followed by 
(\ref{eq:projexactmodes}) we get 
\begin{align}
\| \hatu_N \|_{\IR^N} \lesssim \| u_N \|_\omega \lesssim \| u_0 \|_\omega
\end{align}
For the second term,  we first note that the stabilization terms 
$s_h$ and $s_{h,\partial \Omega}$ vanish on $\mcU_N$ so that  
\begin{align} \label{eq:err-est-prf-aaa}
m_h(u_{N},v) = (u_N,v)_\omega \qquad \forall v \in \mcU_{N,h}
\end{align}
Then by subtracting and adding $u_N$ in the first argument to $m_h$, we have for any $v\in \mcU_{N,h}$,
\begin{align}
&m_h( I_h \hatu_N - u_{N,h}, v )
\\ \label{eq:err-est-prf-c}
&= m_h( I_h \hatu_N - u_N, v ) + m_h( u_N, v ) - m_h( u_{N,h}, v )
\\  \label{eq:err-est-prf-a}
&= m_h( I_h \hatu_N - u_N, v ) + ( u_N,v)_{\omega} - (u_0, v )_{\omega}
\\  \label{eq:err-est-prf-b}
&= m_h( I_h \hatu_N - u_N, v ) + ( u_N,v - I \hatv )_{\omega} - (u_0, v - I \hatv )_{\omega}
\end{align}
where we used (\ref{eq:err-est-prf-aaa}) and (\ref{eq:proj-stab}) on the second and third terms in (\ref{eq:err-est-prf-c}), respectivley, and the definition (\ref{eq:projexactmodes}) of $u_N$ to subtract $I\hatv \in \mcU_N$ in (\ref{eq:err-est-prf-a}). Employing continuity of the involved forms, we get
\begin{align}
&m_h( I_h \hatu_N - u_{N,h}, v )
\\
&\leq \|I_h \hatu_N - u_N\|_{m_h} \|v\|_{m_h} + \| u_N\|_{\omega} \|v - I \hatv \|_{\omega} + \|u_0\|_\omega \|v - I \hatv\|_{\omega}
\\
&\lesssim h \|\hatu_N\|_{\IR^N} \|v\|_{m_h} + \| u_N\|_{\omega} h^2 \| \hatv \|_{\IR^N} + \|u_0\|_\omega h^2  \| \hatv \|_{\IR^N}
\\
&\lesssim h \underbrace{( \|\hatu_N\|_{\IR^N}  + h \| u_N\|_{\omega} + h \|u_0\|_\omega )}_{\lesssim   \|u_0\|_\omega}  \| v \|_{m_h}
\end{align}
where we used the stability (\ref{eq:stab-mh}) and the bounds $ \|\hatu_N\|_{\IR^N}  \lesssim \| u_N \|_\omega$ and  $\| u_N \|_{\omega} \lesssim \| u_0 \|_\omega$.
Thus by taking $v = I_h u_N - u_{N,h}$, we conclude that
\begin{align}
\| I_h u_N - u_{N,h} \|_{m_h} \lesssim h  \| u_0\|_{\omega} 
\end{align}
which combined with (\ref{eq:err-est-prf-aa}) and (\ref{eq:err-est-prf-bb}) concludes the proof.
\end{proof}

We finally prove the following global result,
\begin{prop}\label{prop:global-est} Let $u_N\in \mcU_N$ be defined by (\ref{eq:projexactmodes}) and $u_{N,h} \in \mcU_{N,h}$ be defined by 
(\ref{eq:proj-stab}). Then, there is a constant depending on higher order Sobolev spaces of $g$ such that,
\begin{equation}\label{eq:err-est-glob-H1}
\boxed{
\| u_N - u_{N,h}\|_{H^1(\Omega)} \lesssim h \| u_0 \|_\omega
}
\end{equation}
\end{prop}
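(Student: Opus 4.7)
The plan is to split $u_N - u_{N,h}$ via the triangle inequality into an interpolation-type piece and a discrepancy that lies in the discrete space $\mcU_{N,h}$:
\[
\|u_N - u_{N,h}\|_{H^1(\Omega)} \leq \|(I - I_h)\hat{u}_N\|_{H^1(\Omega)} + \|I_h \hat{u}_N - u_{N,h}\|_{H^1(\Omega)}.
\]
The first summand is immediate: by (\ref{eq:Ih-error}) it is controlled by $h \|\hat{u}_N\|_{\IR^N}$, and (\ref{eq:stab-exact}) applied to $u_N$ together with (\ref{eq:projexactmodes}) (testing with $w=u_N$) bounds $\|\hat{u}_N\|_{\IR^N}$ by $\|u_0\|_\omega$, producing an $O(h)\|u_0\|_\omega$ contribution.

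For the second summand $e := I_h\hat{u}_N - u_{N,h} \in \mcU_{N,h}$, the idea is to pass through the coefficient vector. Writing $e = I_h \hat{e}$, the discrete stability (\ref{eq:stab-mh}) yields $\|\hat{e}\|_{\IR^N}\lesssim \|e\|_{m_h}$. A triangle-inequality argument combining Proposition \ref{prop:energy-est} with (\ref{eq:Ih-error}) used inside the $m_h$-norm (noting that $s_h$ and $s_{\partial\Omega}$ vanish on the exact modes $\mcU_N$, and that the $H^{1/2}$-boundary penalty acts on $v - I\hat{v}$) gives $\|e\|_{m_h} \lesssim h\|u_0\|_\omega$. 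To convert this coefficient-vector bound back into an $H^1(\Omega)$-bound, expand $e = \sum_{n=1}^N \hat{e}_n \varphi_{n,h}$ and apply Cauchy--Schwarz to obtain
\[
\|e\|_{H^1(\Omega)} \leq \|\hat{e}\|_{\IR^N}\Bigl(\sum_{n=1}^N \|\varphi_{n,h}\|_{H^1(\Omega)}^2\Bigr)^{1/2}.
\]
The individual terms are controlled $h$-uniformly via $\|\varphi_{n,h}\|_{H^1(\Omega)} \leq \|\varphi_n\|_{H^1(\Omega)}+\|\varphi_n - \varphi_{n,h}\|_{H^1(\Omega)}$ together with (\ref{eq:basis-error-est}) and elliptic regularity, producing the asserted dependence on higher Sobolev norms of the data $g_n$.

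The main obstacle is that the $m_h$-norm is strictly weaker than the global $H^1(\Omega)$-norm: it only sees $L^2(\omega)$ together with the boundary and normal-jump penalties, so one cannot upgrade Proposition \ref{prop:energy-est} by a purely local or duality argument. The finite-dimensionality of $\mcU_{N,h}$ makes the upgrade possible, but at the price of an equivalence constant depending on $N$ and on the regularity of the boundary modes, which is precisely the origin of the basis-dependent (potentially $N$-sensitive) constant in the statement.
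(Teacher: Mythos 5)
Your proof is correct, and it reaches the estimate by a route that differs from the paper's in its choice of decomposition and stability estimate. You split $u_N-u_{N,h}$ as $(I-I_h)\hat{u}_N+(I_h\hat{u}_N-u_{N,h})$, i.e.\ you reuse the splitting from the energy estimate, control the second piece in the $m_h$-norm (which indeed follows either from the intermediate bound inside the proof of Proposition~\ref{prop:energy-est} or by triangle inequality from its statement together with \eqref{eq:Ih-error}), upgrade to the coefficient norm via the \emph{discrete} stability \eqref{eq:stab-mh}, and then convert back to $H^1(\Omega)$ using the $h$-uniform bounds $\|\varphi_{n,h}\|_{H^1(\Omega)}\lesssim\|g_n\|_{H^{3/2}(\partial\Omega)}$ from \eqref{eq:basis-error-est}. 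The paper instead writes $e=u_N-u_{N,h}$ and splits $e=(e-I\hat{e})+I\hat{e}$, so that the pivot object $I\hat{e}$ lies in the \emph{exact} mode space $\mcU_N$ and the \emph{continuous} stability \eqref{eq:stab-exact} (together with norm equivalence on $\mcU_N$) upgrades $\|I\hat{e}\|_\omega$ to $\|I\hat{e}\|_{H^1(\Omega)}$, while the remainder $e-I\hat{e}=I\hat{u}_{N,h}-u_{N,h}=(I-I_h)\hat{u}_{N,h}$ is pure consistency error handled by \eqref{eq:Ih-error} and the bound $\|\hat{u}_{N,h}\|_{\IR^N}\lesssim\|u_{N,h}\|_{m_h}\lesssim\|u_0\|_\omega$. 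Both arguments are finite-dimensional norm-equivalence upgrades and yield the same constant (growing with $N$ and with the $H^{3/2}$-norms of the $g_n$); your version makes the origin of the ``higher order Sobolev'' constant more explicit through the Cauchy--Schwarz sum over the basis, whereas the paper's version confines the use of the stabilization to the single bound $\|\hat{u}_{N,h}\|_{\IR^N}\lesssim\|u_{N,h}\|_{m_h}$, which is the point emphasized in the remark following the proposition. Your closing observation --- that the $m_h$-norm is too weak for a direct or duality-based upgrade and that finite dimensionality is what makes the $H^1$ bound possible --- is exactly right and consistent with that remark.
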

\begin{proof} With $e = u_N - u_{N,h}$, we have
\begin{equation}
\|e\|_{H^1(\Omega) }\lesssim \|e - I \hate\|_{H^1(\Omega)} + \|I \hate\|_{H^1(\Omega)}
\end{equation}
By norm equivalence on discrete spaces we have
\begin{equation}
\| I \hate\|_{H^1(\Omega)} \lesssim \|\hate\|_{\mathbb{R}^N} 
\end{equation}
Since $I \hate \in \mathcal{U}_{N}$ there holds using \eqref{eq:stab-exact},
\begin{equation}
\|I \hate\|_{H^1(\Omega)} \lesssim \|I \hate\|_\omega \leq  \|e\|_\omega + \|u_{N,h} - I \hatu_{N,h}\|_\omega
\end{equation}
By Proposition \ref{prop:energy-est} there holds
\begin{equation}
\|e\|_\omega \lesssim h \|u_0\|_\omega
\end{equation}
For the second term we have using \eqref{eq:Ih-error}, 
\begin{equation}
\|u_{N,h} - I \hatu_{N,h}\|_\omega \leq \|(I_h- I) \hatu_{N,h}\|_\Omega \lesssim h^2 \|\hatu_{N,h}\|_{\mathbb{R}^N} \lesssim  h^2 \|u_{N,h}\|_{m_h}
\end{equation}
Similarly we have 
\begin{equation}
 \|e - I \hate \|_{H^1(\Omega)} = \|u_{N,h} - I \hatu_{N,h}\|_{H^1(\Omega)} \lesssim h \|\hatu_{N,h}\|_{\mathbb{R}^N} \lesssim  h \|u_{N,h}\|_{m_h}
\end{equation}
We conclude the proof by using the bound
\begin{equation}
\|u_{N,h}\|_{m_h} \lesssim \|u_0\|_\omega
\end{equation}
\end{proof}

\begin{rem} Observe that the stabilization is never explicitly used in order to obtain error estimates. Indeed its only role is to ensure the bound $\|\hatu_{N,h}\|_{\mathbb{R}^N} \lesssim \|u_{N,h}\|_{m_h}$ without condition on the mesh. %
\end{rem}

\paragraph{Example (Exponential growth of the stability constant).} Let $\Omega$ be the unit disc. Then the solutions to $-\Delta u = 0$ are of the form 
\begin{align} \label{eq:ex-analytical-expansion}
u(r,\theta) =  \sum_{n=0}^\infty a_{2n} \underbrace{r^n \cos (n \theta)}_{=\varphi_{2n}} +  a_{2n+1} \underbrace{r^n \sin (n \theta)}_{=\varphi_{2n+1}}
\end{align}
where $(r,\theta)$ are the standard polar coordinates.  Let $\omega$ be the disc centered at the origin with radius 
$r_\omega$.  We note that when $n$ becomes large, the modes become small in the disc $\omega$, and therefore,  
the inverse problem becomes increasingly ill-posed, see Figure~\ref{fig:disc-ex-a}. For instance, the constant in an estimate of the type 
\begin{align} \label{eq:ex-analytical-constant}
\| \varphi_{2n+m} \|_\Omega \leq C_n \| \varphi_{2n+m}\|_\omega, \qquad m=0,1 
\end{align}
scales like 
\begin{equation}
C_n = r_{\omega}^{-(n+1)}
\end{equation}
and thus becomes arbitrarily large when $n$ becomes large.  But, if we, from observations, can conclude that only modes with $n < n_g$ for some $n_g$ are present, then the stability is controlled, see Figure~\ref{fig:disc-ex-b}. Note also that the stability is directly related to where the disc $\omega$ is placed.  If it is located close to the boundary, the stability improves.

\begin{figure}\centering
  \begin{subfigure}{0.59\linewidth}\centering
  \includegraphics[width=0.31\linewidth]{./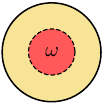}
  \includegraphics[width=0.31\linewidth]{./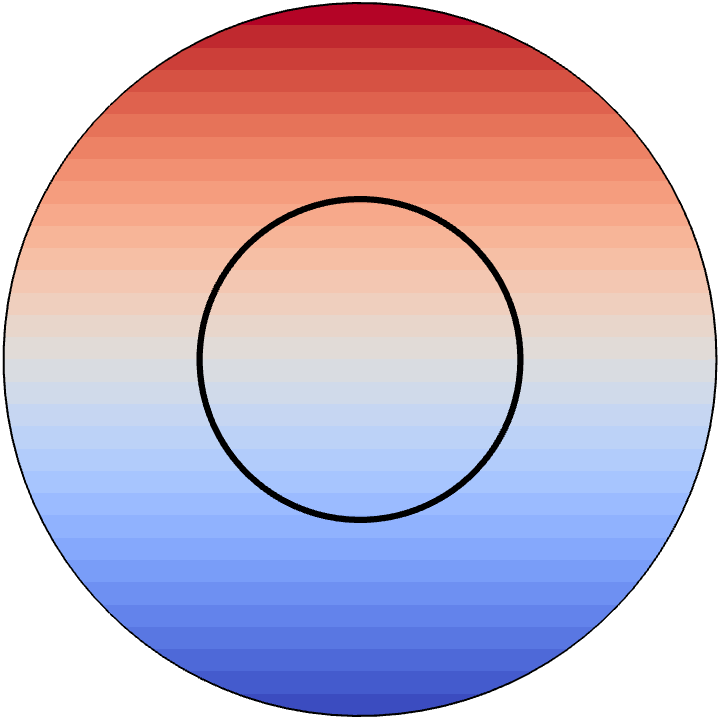}
  \includegraphics[width=0.31\linewidth]{./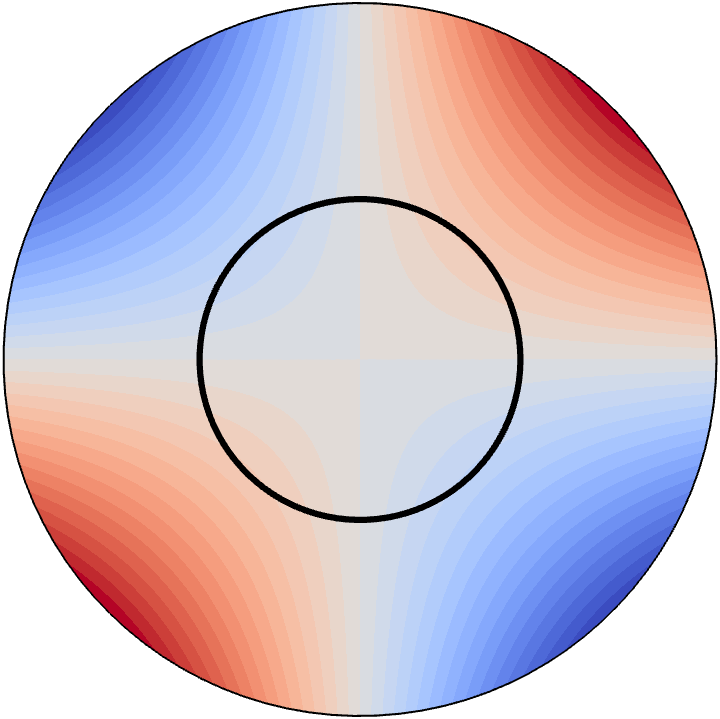}
  
  \includegraphics[width=0.31\linewidth]{./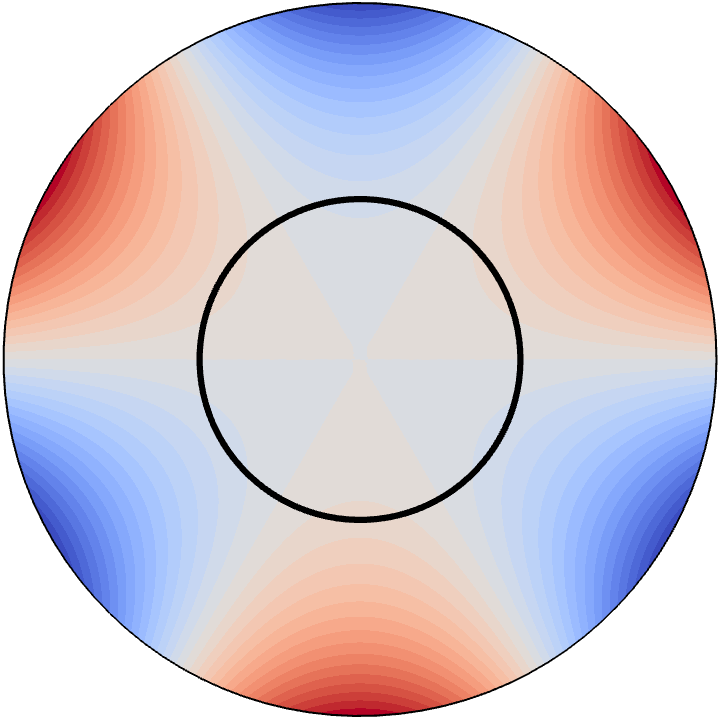}
  \includegraphics[width=0.31\linewidth]{./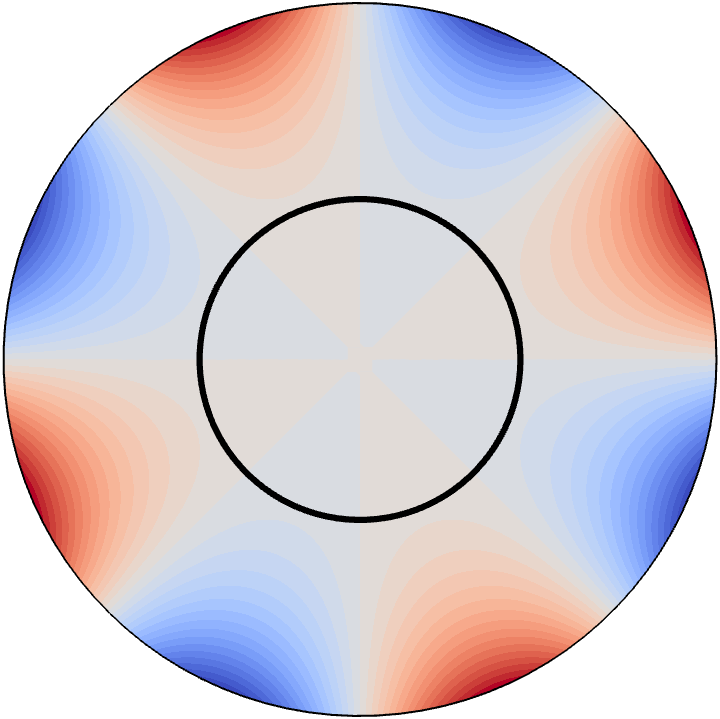}
  \includegraphics[width=0.31\linewidth]{./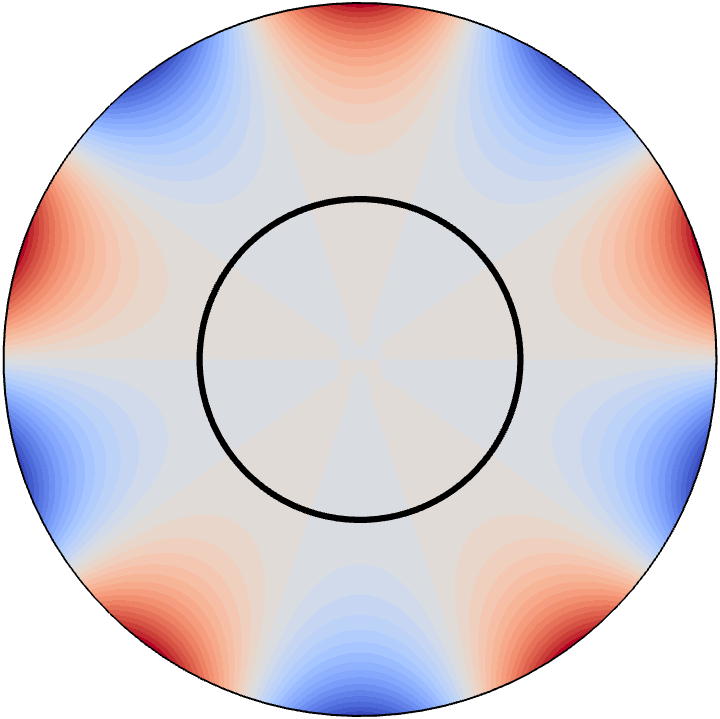}
  \subcaption{Domain and first five modes of $\varphi_{2n+1}=r^n\sin(n\theta)$}
  \label{fig:disc-ex-a}
\end{subfigure}
\begin{subfigure}{0.40\linewidth}\centering
  \includegraphics[width=0.9\linewidth]{./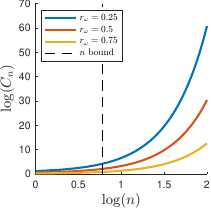}
  \subcaption{Scaling of stability constants}
  \label{fig:disc-ex-b}
\end{subfigure}
\caption{Illustrations for the analytical example with exponential growth of the stability constant. In (a), we show the unit disc domain containing a subdomain $\omega$, in the form of a centered disc of radius $r_\omega$. Looking at the first five non-zero modes $\varphi_{2n+1}=r^n\sin(n\theta)$ from the expansion \eqref{eq:ex-analytical-expansion} we see that these modes rapidly becomes very small within $\omega$, making the problem of retrieving the coefficient values in the expansion based on the solution within $\omega$ increasingly ill-posed for larger $n$. In (b), we illustrate how the constants in the stability estimate \eqref{eq:ex-analytical-constant} scales exponentially with $n$ for different values of the radius $r_\omega$. By utilizing observations, we may conclude an upper bound on $n$, which in turn puts an upper bound on the size of the stability constant.}
\label{fig:disc-ex}
\end{figure}

\section{Methods Based on Machine Learning}

\paragraph{Overview.} We develop a method for efficiently solving the inverse problem (\ref{eq:inverse}) with access to sampled data $\mcG_S$ using machine learning techniques. The main approach is:
\begin{itemize}
\item Construct a parametrization of the data set by first approximately expanding the samples in a finite series of functions, for instance, using Proper Orthogonal Decomposition, and secondly using an autoencoder to find a possible nonlinear low-dimensional structure in the expansion coefficients. 
\item Use operator learning to construct an approximation of the finite element solution operator that maps the expansion coefficients to the finite element solution.
\item Composing the decoder, which maps the latent space to expansion coefficients, with the solution network, we obtain a differentiable mapping that can be used to solve the inverse problem in a lower-dimensional space.
\end{itemize}

\subsection{Processing the Boundary Data}

We combine linear and nonlinear dimensionality reduction techniques by first using PCA on the data to get a POD-basis and then using autoencoders on the POD-coefficients for further reduction. Such combinations are not uncommon, see, e.g., \cite{fulton_latentspace_2019}, and there are several reasons why we do this: In general, an initial linear reduction may function as a relatively cheap preprocessing step to aid a subsequent nonlinear reduction that typically is more expensive. More specifically, we consider methodology for progressing from a fully linear problem (linear PDE, linear data) to a fully nonlinear one (nonlinear PDE, nonlinear data), hence both linear and nonlinear techniques that can be combined are required. The reason for using POD is that it very cheaply and naturally gives a basis that can be used for both the linear and nonlinear PDE-solving techniques considered here. The reason for using autoencoders is simply because they have the same general network architecture already used for the operator networks which also makes combinations with them seem natural.

\paragraph{Proper Orthogonal Decomposition.} To assimilate the data set $\mcG$ in a method for solving the extension problem, we seek to construct a differentiable parametrization of~$\mcG$. To that end, we first use Proper  Orthogonal Decomposition (POD) to represent the data in a POD 
basis $\{\varphi_n\}_{n=1}^N$, 
\begin{align}
g = \sum_{n=1}^N \hatg_n \varphi_n
\end{align}
where $\hatg_n = (g,\varphi_n)_{\IR^N}$. We introduce the mapping 
\begin{align}
\phi_{\mathrm{POD},N}:\mcG \ni g \mapsto \hatg \in G_N \subset \IR^{N} 
\end{align}
where $G_N = \phi_{\mathrm{POD},N}(\mcG)$. We also need the reconstruction operator 
\begin{align}
\phi_{\mathrm{POD},N}^\dagger: \IR^{N} \ni a \mapsto \sum_{n=1}^N a_n \varphi_n \in \mcG
\end{align}
Letting $I_{N}$ denote the identity operator on $\IR^{N}$, we have
\begin{equation}
\phi_{\mathrm{POD},N} \circ \phi_{\mathrm{POD},N}^\dagger = I_{N}
\end{equation}
and we note that the operator $\phi_{\mathrm{POD},N} $ is invertible 
and differentiable.

\paragraph{Autoencoder.} Next, we seek to find a possible nonlinear low-dimensional structure in the POD coefficients using an autoencoder $\phi_{\text{de}} \circ \phi_{\text{en}}$
\begin{align}
\boxed{ G_N \overset{\phi_{\text{en}}}{\longrightarrow} Z  \overset{\phi_{\text{de}}}{\longrightarrow} G_N}
\end{align}
where $\phi_{\text{en}}$ denotes the encoder map and $\phi_{\text{de}}$ the decoder map. Letting $\mathbb{E}$ denote the expectation operator and $P$ an arbitrary probability distribution, the autoencoder is trained to minimize the loss
\begin{align}
\mathbb{E}_{\hatg \sim P} \left[ \| \hatg - (\phi_{\text{de}} \circ \phi_{\text{en}})(\hatg) \|^2_{\IR^{N}} \right] 
\end{align}
See Figure~\ref{fig:networks-ae} for a schematic illustration.
Here $Z \sim \IR^{n_Z}$ is the latent space with dimension $n_Z < N$. If there is a low-dimensional structure, we may often take $n_Z$ significantly lower than $N$.

\subsection{Operator Learning}

The operator learning approach taken here is the same as in \cite{larson2024nonlinoplearn} which is a special case of a more general method presented in \cite{sharp_data-free_2023}. We discretize the PDE problem using finite elements and train a network 
\begin{align}
\phi_{u,N,h} : G_N \rightarrow V_h \subset H^1(\Omega)
\end{align}
which approximates the finite element solution to 
\begin{align}
\mcP(u) = 0 \quad \text{in $\Omega$}, \qquad u = \phi_{\mathrm{POD},N}^\dagger (\hatg) \quad \text{on $\partial \Omega$}
\end{align}
see Figure~\ref{fig:networks-operator}. The output of the network is the finite element degrees of freedom (DoFs). For the training of the network we use the energy functional $E$ corresponding to the differential operator $\mcP$ as the foundation for the loss function. Again, letting $\mathbb{E}$ denote the expectation operator and $P$ an arbitrary probability distribution, the loss function that we minimize during training is
\begin{align}
\mathbb{E}_{\hatg \sim P} \left[ E(\phi_{u,N,h}(\hatg)) \right]
\label{eq_abstract_loss}
\end{align}
If there is no corresponding energy functional, one can instead minimize the residual of the finite element problem. It should be noted though, that assembling the residual instead of the energy has a greater computational cost and that the residual is not as easily and naturally decomposed into its local contributions as the energy. For technical details about network architecture and training used in this work, we refer to Section~\ref{sec_exs_nold}.

\subsection{Inverse Problem}

Finally, composing the maps, we get a solution operator 
\begin{align}
\boxed{Z \overset{\phi_{\text{de}}}{\longrightarrow} G_N 
 \overset{\phi_{u,N,h}}{\longrightarrow}  V_h}
\end{align}
that maps the latent space into approximate finite element solutions to the partial differential equation
\begin{align}
\mcP((\phi_{u,N,h} \circ \phi_g)(z)) = 0
\end{align}
see Figure~\ref{fig:networks-operator-ae}.

This mapping is differentiable and can be directly used to 
rewrite the optimization problem as an unconstrained problem 
in the form
\begin{align}
\boxed{ \inf_{z \in Z} \frac12 \| u_0 - (\phi_{u,N,h} \circ \phi_{\text{de}})(z) \|_{L^2(\omega)}^2 }
\label{eq_invprob_Z}
\end{align}
where we note that the constraint is fulfilled by construction.

\begin{figure}\centering
\begin{subfigure}[t]{\linewidth}\centering
\includegraphics[width=0.8\linewidth]{./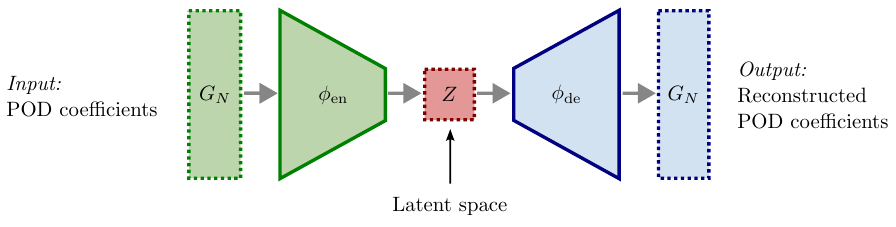}
  \subcaption{Autoencoder network $\phi_{{\text{de}}}  \circ \phi_{{\text{en}}}$}
  \label{fig:networks-ae}
\end{subfigure}

\begin{subfigure}[t]{\linewidth}\centering
  \includegraphics[width=0.7\linewidth]{./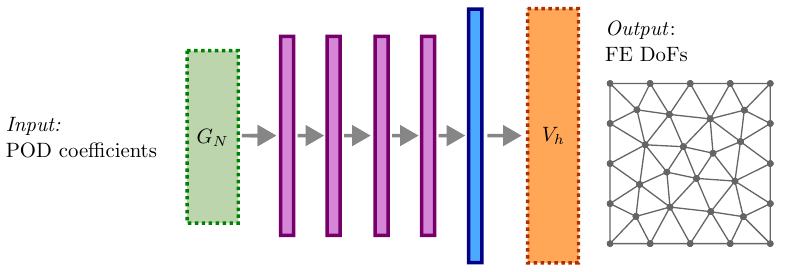}
    \subcaption{Operator network $\phi_{u,N,h}$ with four nonlinear layers (purple) and one linear output layer (blue)}
    \label{fig:networks-operator}
  \end{subfigure}

  \begin{subfigure}[t]{\linewidth}\centering
    \includegraphics[width=0.90\linewidth]{./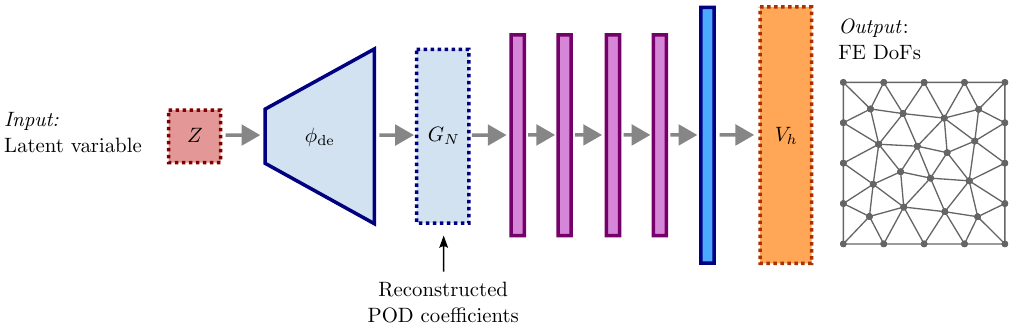}
      \subcaption{Composition of decoder and operator network $\phi_{u,N,h} \circ \phi_{{\text{de}}}$}
      \label{fig:networks-operator-ae}
    \end{subfigure}
\caption{Overview of networks utilized in methods based on machine learning.
The autoencoder network in (a) is used for identifying a low-dimensional structure in the dataset $\mcG$. The operator network in (b) is
trained to approximate the solution to the PDE, given input boundary data.
The composition of the decoder part of the autoencoder and the operator network in (c) is used for solving the inverse problem in the low-dimensional latent space.}
\label{fig:networks}
\end{figure}

\section{Examples}\label{sec_examples}

We consider three examples of the inverse minimization problem ordered in increased nonlinearity. The first is a fully linear case with a linear differential operator and linear boundary data. In the second example, we consider a nonlinear operator with linear data. The final example is a fully nonlinear case with both operator and data being nonlinear. The examples demonstrate how each introduced nonlinearity may be treated with machine learning methods.

The geometry is the same in all the examples. We take the solution domain $\Omega := (-0.5, 0.5)^2 \subset \IR^2$ and the reference domain $\omega \subset \Omega$ to be the u-shaped domain defined by
\begin{equation}
\omega := \{ (x, y) \in \IR^2 \,|\, x < 0.25 \land (x < -0.25 \lor y < -0.25 \lor y > 0.25) \}
\end{equation}
see Figure~\ref{fig_domainwithomega}.

\begin{figure}
\centering
\includegraphics[width=0.2\linewidth]{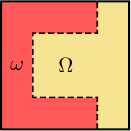}
\caption{Domain used in all numerical examples with the subdomain $\omega$ indicated.}
\label{fig_domainwithomega}
\end{figure}

When solving the inverse problems in practice, we use data $u_0 \in V_h$. We also minimize the mean squared error (MSE) over the DoFs belonging to $\omega$ instead of the squared $L^2(\omega)$ norm of the error, which is valid since they are equivalent on $V_h$ from the Rayleigh quotient. The only ``stabilization'' we use for the inverse problem is that the boundary data is finite-dimensional and that this dimension together with the mesh size $h$ both are small enough. We point out that no additional stabilization, such as including penalty terms, is used. The criterion for when a minimization process is considered to have converged is based on the change of significant digits of the MSE. For the fully linear problem we consider it converged when at least three significant digits remain constant, and for all the nonlinear problems when at least two significant digits remain constant. This is in turn based on when both the optimization variables and the visual representation of the output do not seem to change anymore and has been obtained by testing.

The implementation used for the examples is based on the code presented in \cite{sharp_data-free_2023} which is publicly available at \url{https://github.com/nmwsharp/neural-physics-subspaces}. All inverse problem minimizations have been performed with the Adam optimizer with a step size = 0.1 on an Apple M1 CPU. The GPU computations were performed on the Alvis cluster provided by NAISS (See Acknowledgements).

\subsection{Linear Operator with Linear Data}

We start with the fully linear case which we will build upon in the later examples. To construct a linear synthetic data set $\mcG$, we may pick a 
set of functions $\{\varphi_j\}_{j\in J} \subset H^{1/2}(\partial \Omega)$ where $J$ is some index set, and consider 
\begin{align}
\mcG = \Bigl\{g_i = \sum_{j \in J } \xi_j \varphi_j \Bigr\}
\end{align}
where $\xi\in [s_i,t_i] \subset \IR$. Note that we require the boundary data to be bounded.  Alternatively,  we can also consider taking the 
convex hull of the basis functions $\{\varphi_j\}_{j\in J}$, which corresponds to requiring that 
\begin{align}
\sum_j \xi_j = 1, \quad \xi_j \geq 0
\end{align}
Given nodal samples of such functions, we may apply principal component analysis (PCA) to estimate a set of basis functions and use them to 
parametrize the data set. More precisely, assume we observe the boundary 
data in the nodal points at the boundary. Let $X$ be the matrix where each observation forms a row. Then, computing the eigenvectors to the symmetric 
matrix $X^T X$ provides estimates of the basis.

Here, we consider two-dimensional examples. We let $\Omega$ be the unit square centered at the origin and generate four structured uniform triangular meshes of varying sizes: 10x10, 28x28, 82x82, and 244x244. The synthetic data set of boundary nodal values is in turn generated from the perturbed truncated Fourier series 
\begin{align}
g(x) = (\hatg_0 + \delta_0) + \sum_{n=1}^{(N-1)/2} (\hatg_{2n-1} + \delta_{2n-1}) \sin(2n\pi x/l) 
+ (\hatg_{2n} + \delta_{2n}) \cos(2n\pi x/l)
\label{eq_fourierboundaryfunc}
\end{align}
where $l$ is the circumference of $\Omega$ and $x$ is the counter-clockwise distance along the boundary starting from the point where the boundary crosses the first coordinate axis. We sample unperturbed coefficients $\hatg_{j} \sim \mcU(-1, 1)$ and perturbations $\delta_j \sim \mcN(0, 0.0225)$. For each of the four meshes, we consider two values of the number of coefficients used to describe the boundary conditions; $N=9$ and $N=21$. We generate 1000 functions of the type \eqref{eq_fourierboundaryfunc} for each of the eight cases. Then, for every case, we compute a POD basis $\{\varphi_{\mathrm{POD},j}\}_{j \in J}$ for the boundary using PCA on the data set. Unsurprisingly, the number of significant singular values turns out to be the number $N$ used in each case. 

We use the truncated POD boundary basis corresponding to significant singular values to compute and interior basis. We do this by solving Laplace's equation with FEM. We take the discrete space $V_h$ to simply be the space of piecewise linear finite elements on the triangle mesh considered. The FEM interior basis $\{\varphi_{\mathrm{FEM},n}\}_{n = 1}^N$ is computed by: For each $n = 1, ..., N$, find $\varphi_{\mathrm{FEM},n} \in V_h$ such that $\varphi_{\mathrm{FEM},n} |_{\partial \Omega} = \varphi_{\mathrm{POD},n}$ and 
\begin{align}
( \nabla \varphi_{\mathrm{FEM},n}, \nabla v)_{\Omega} = 0 \quad \forall v \in V_h
\end{align}
In Figure~\ref{fig_lindata_podbfs}, the significant POD boundary basis functions, together with their corresponding FEM interior basis functions, are presented for the case with $N=9$ and the 82x82 mesh.
\begin{figure}
\centering
\adjincludegraphics[width=0.3\linewidth,Clip=0 {.23\height} 0 {.23\height}]{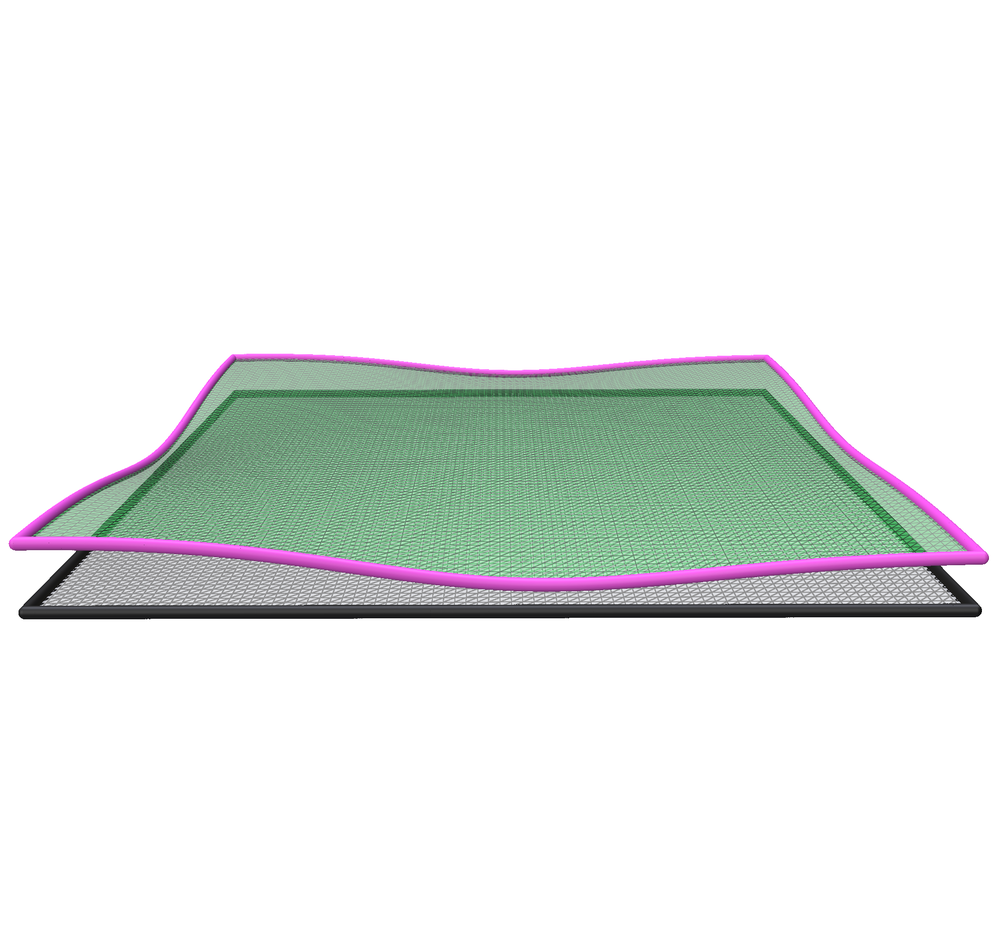}
\adjincludegraphics[width=0.3\linewidth,Clip=0 {.23\height} 0 {.23\height}]{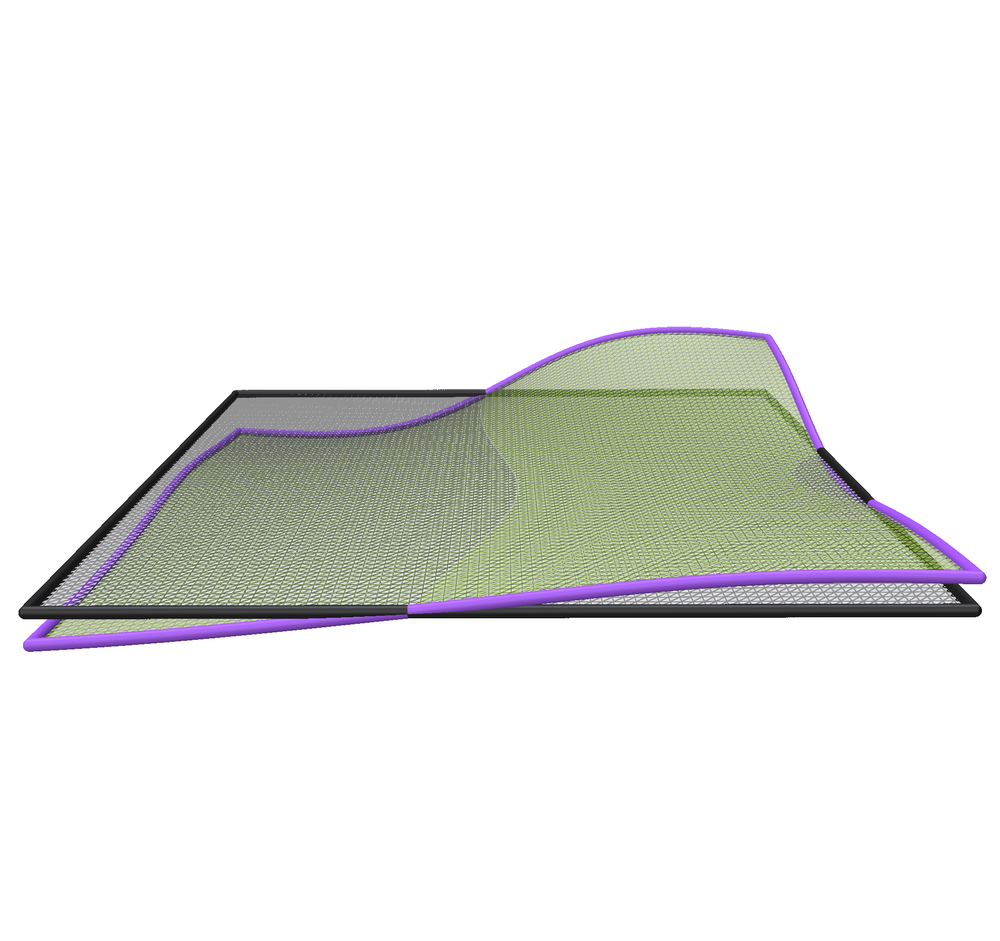}
\adjincludegraphics[width=0.3\linewidth,Clip=0 {.23\height} 0 {.23\height}]{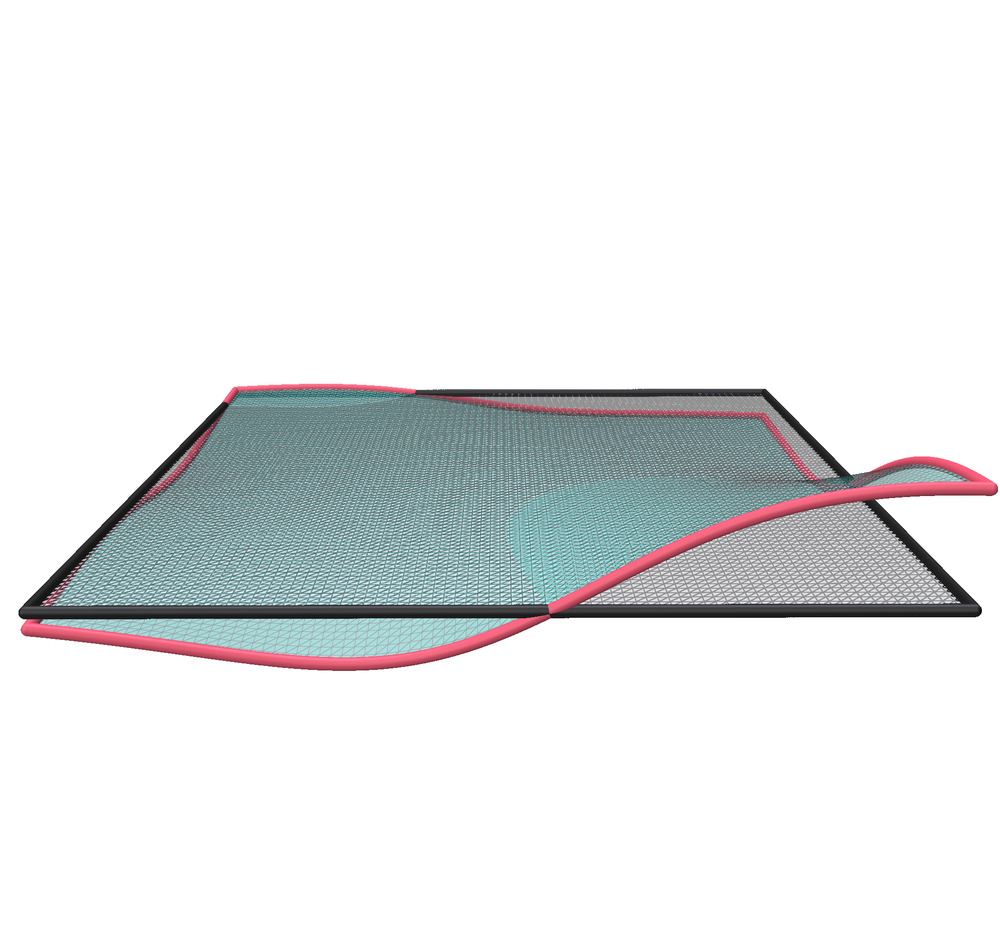}

\adjincludegraphics[width=0.3\linewidth,Clip=0 {.23\height} 0 {.23\height}]{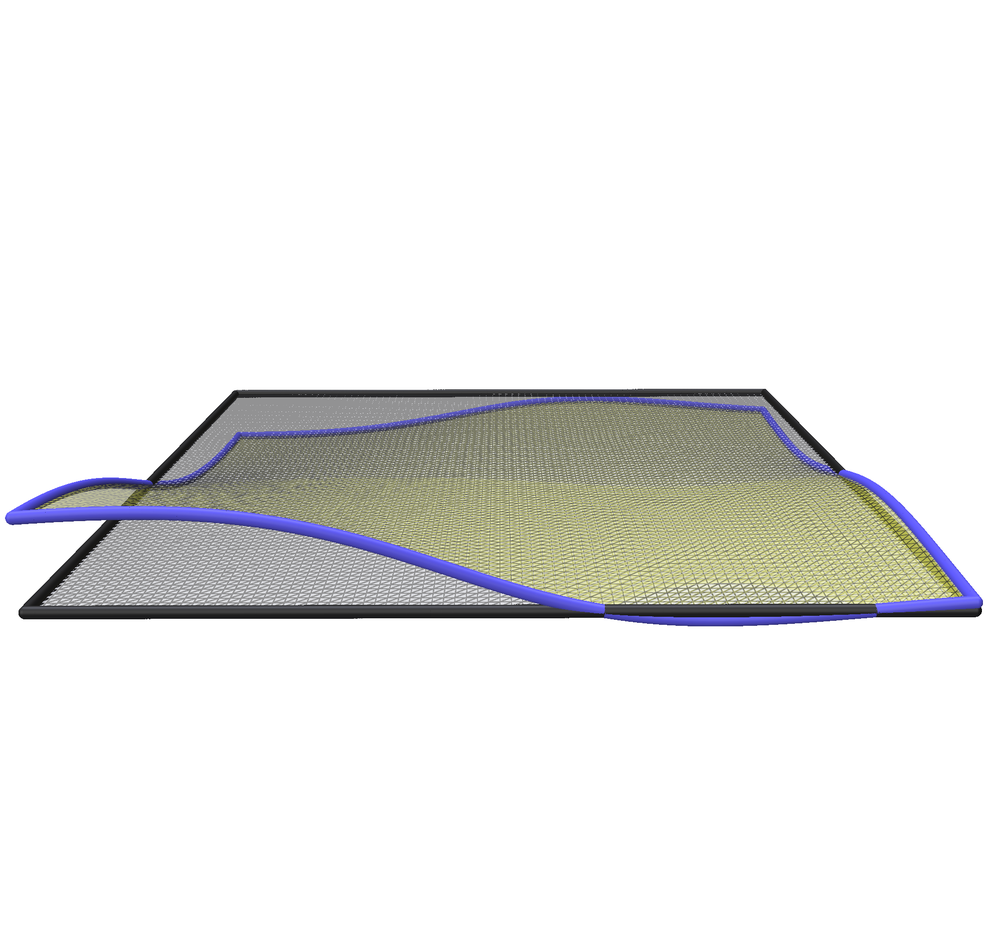}
\adjincludegraphics[width=0.3\linewidth,Clip=0 {.23\height} 0 {.23\height}]{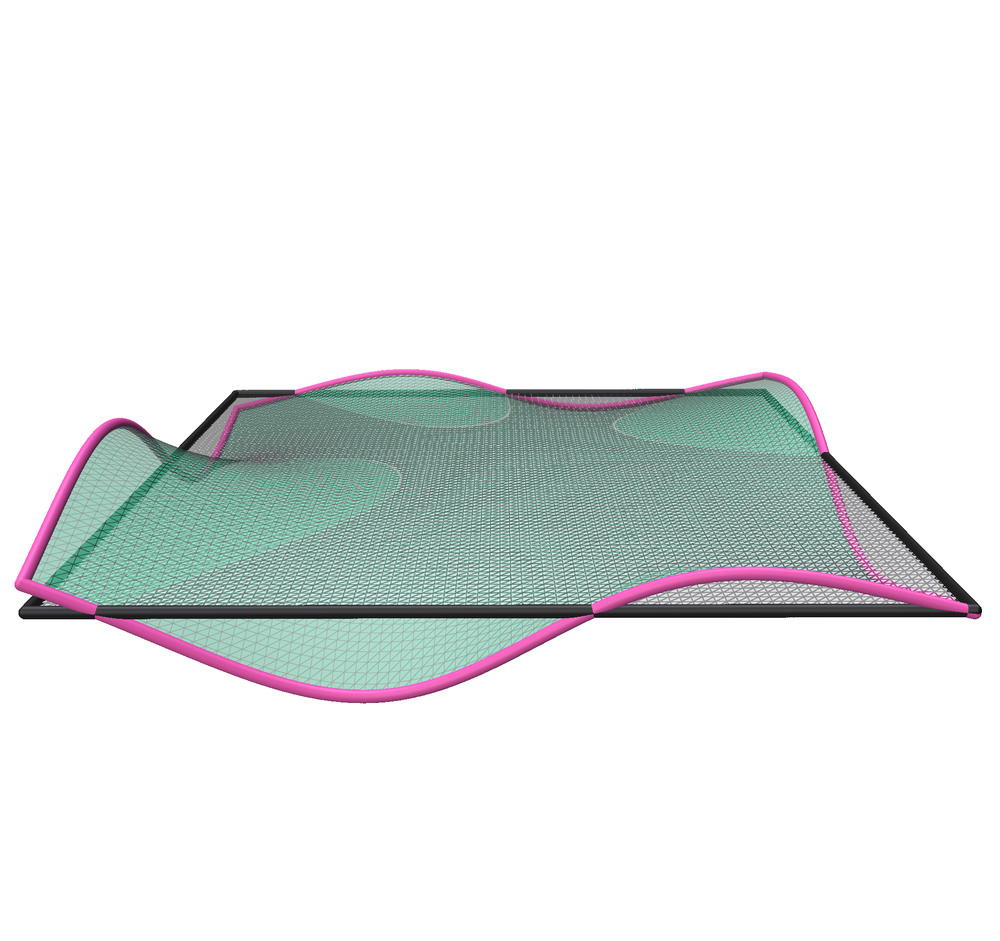}
\adjincludegraphics[width=0.3\linewidth,Clip=0 {.23\height} 0 {.23\height}]{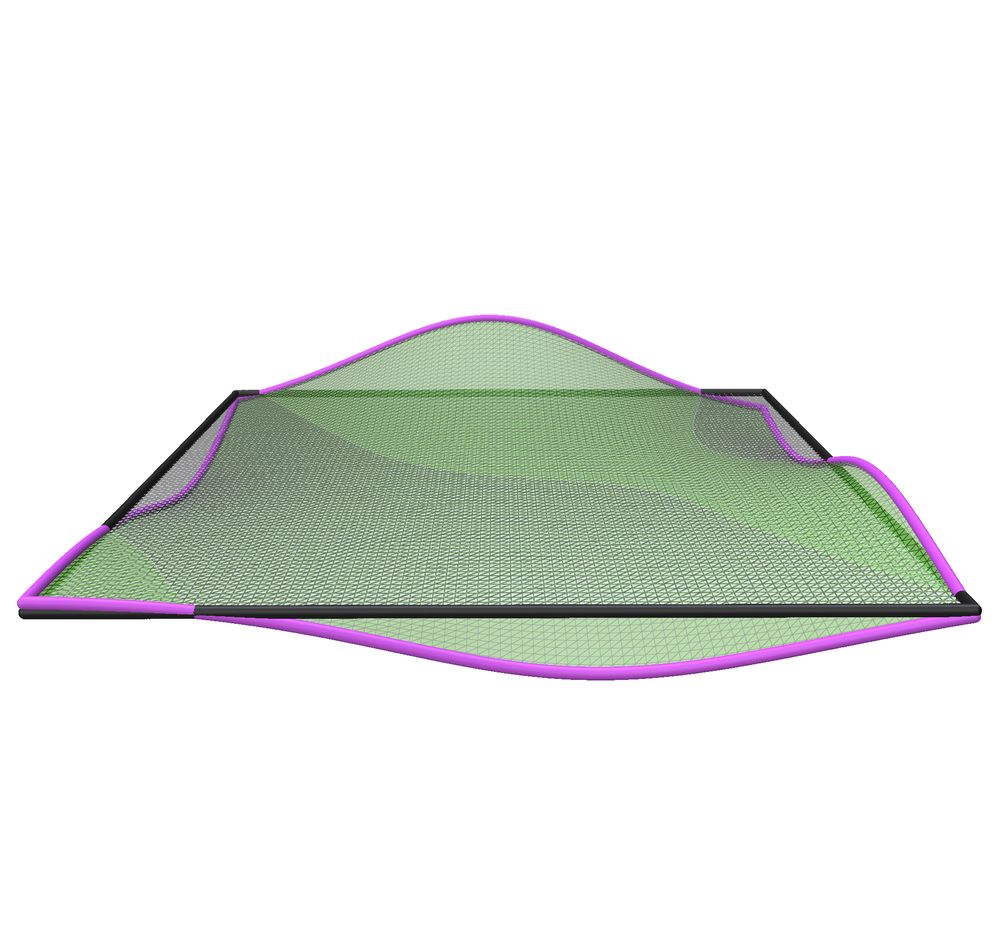}

\adjincludegraphics[width=0.3\linewidth,Clip=0 {.23\height} 0 {.23\height}]{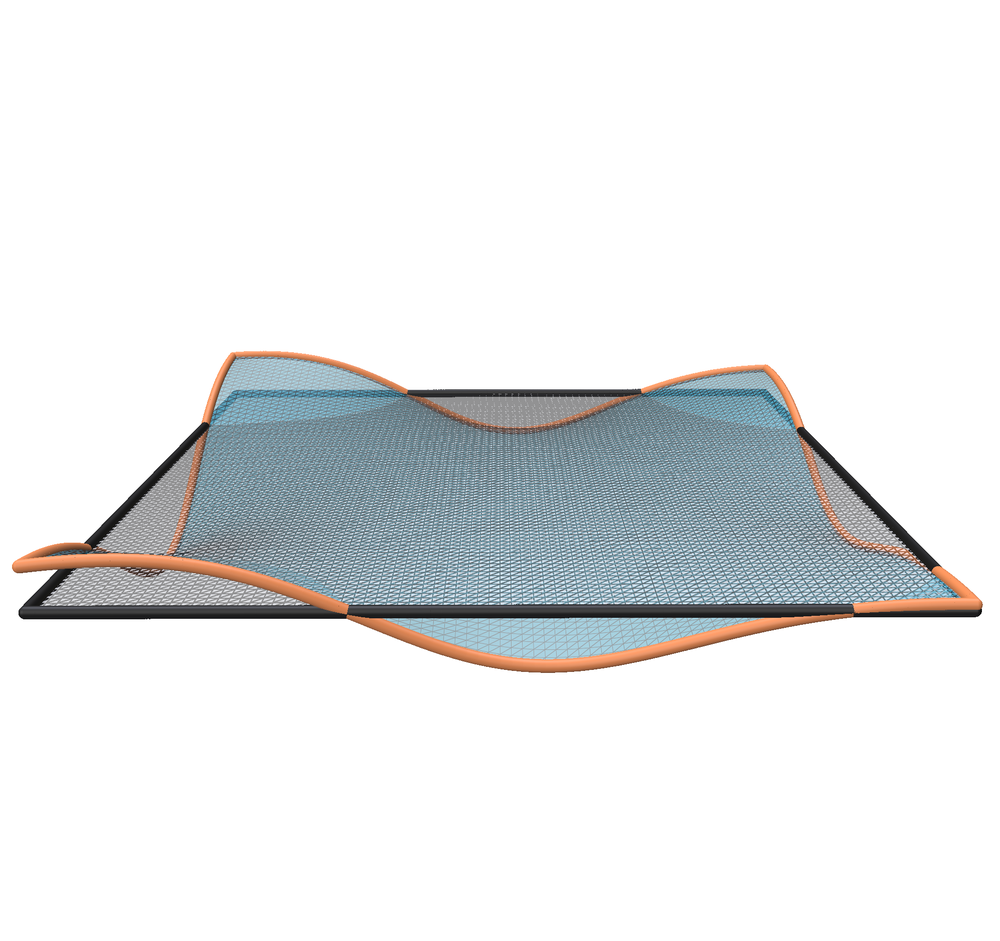}
\adjincludegraphics[width=0.3\linewidth,Clip=0 {.23\height} 0 {.23\height}]{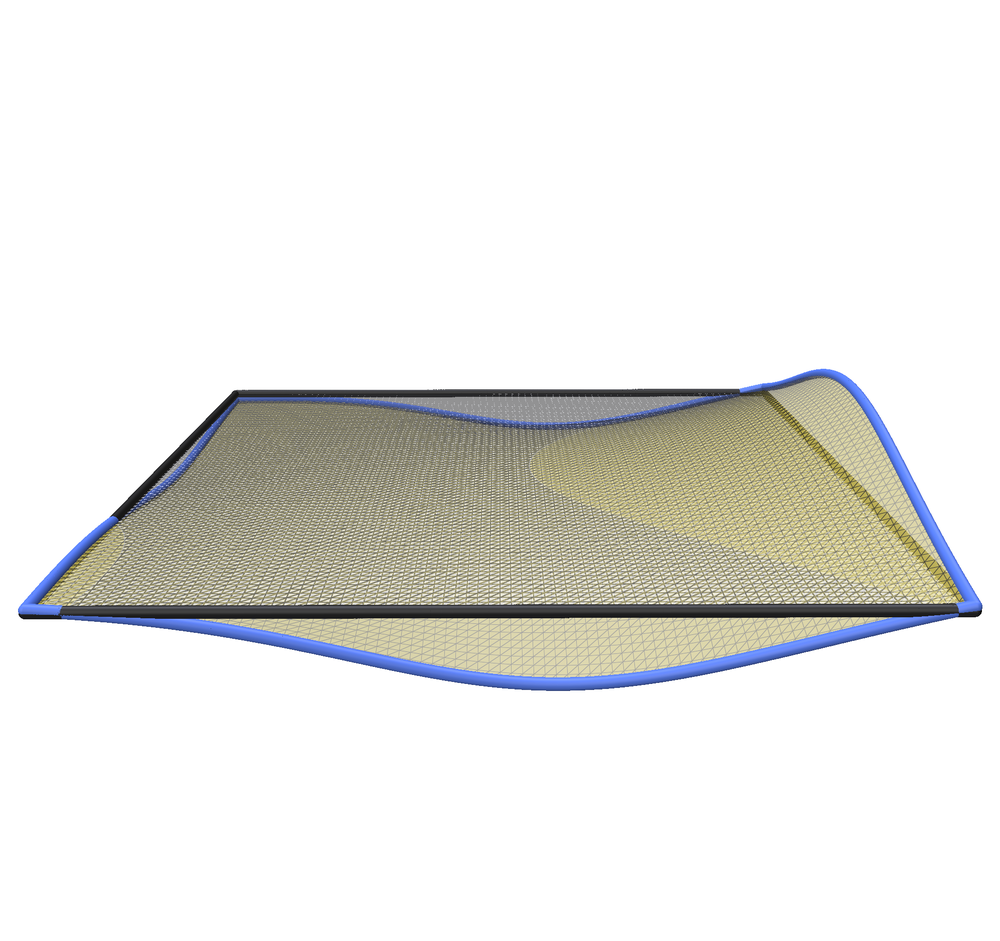}
\adjincludegraphics[width=0.3\linewidth,Clip=0 {.23\height} 0 {.23\height}]{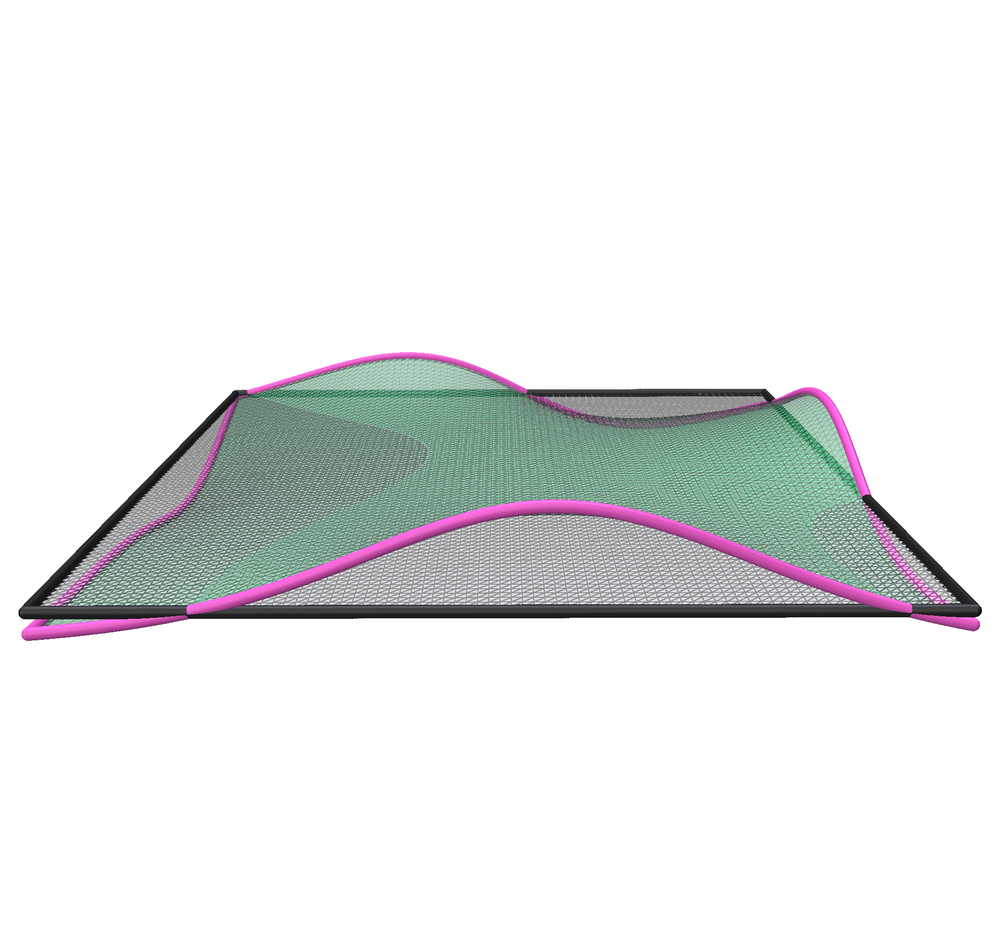}
\caption{FEM interior basis functions with their corresponding POD boundary basis functions on a structured uniform 82x82 triangular mesh of the unit square.}
\label{fig_lindata_podbfs}
\end{figure}

We may now use the fact that Laplace's equation is linear to superpose the FEM interior basis functions in a linear combination.
\begin{align}
u_{h, \text{lin}, N} = \sum_{n = 1}^N c_n \varphi_{\mathrm{FEM},n}
\label{eq_linlin_lc}
\end{align}
This enables us to solve a linear inverse minimization problem over the coefficients $(c_1, ..., c_N) \in \IR^{N}$ in the linear combination. We present a demonstration of this process for the case with $N = 9$ and the 82x82 mesh in Figure~\ref{fig_linlin_optprocess}. There it can clearly be observed that the finite element solution given by the linear combination approaches the noisy data and the reference solution as the optimization progresses.
\begin{figure}
\centering
\includegraphics[width=0.3\linewidth]{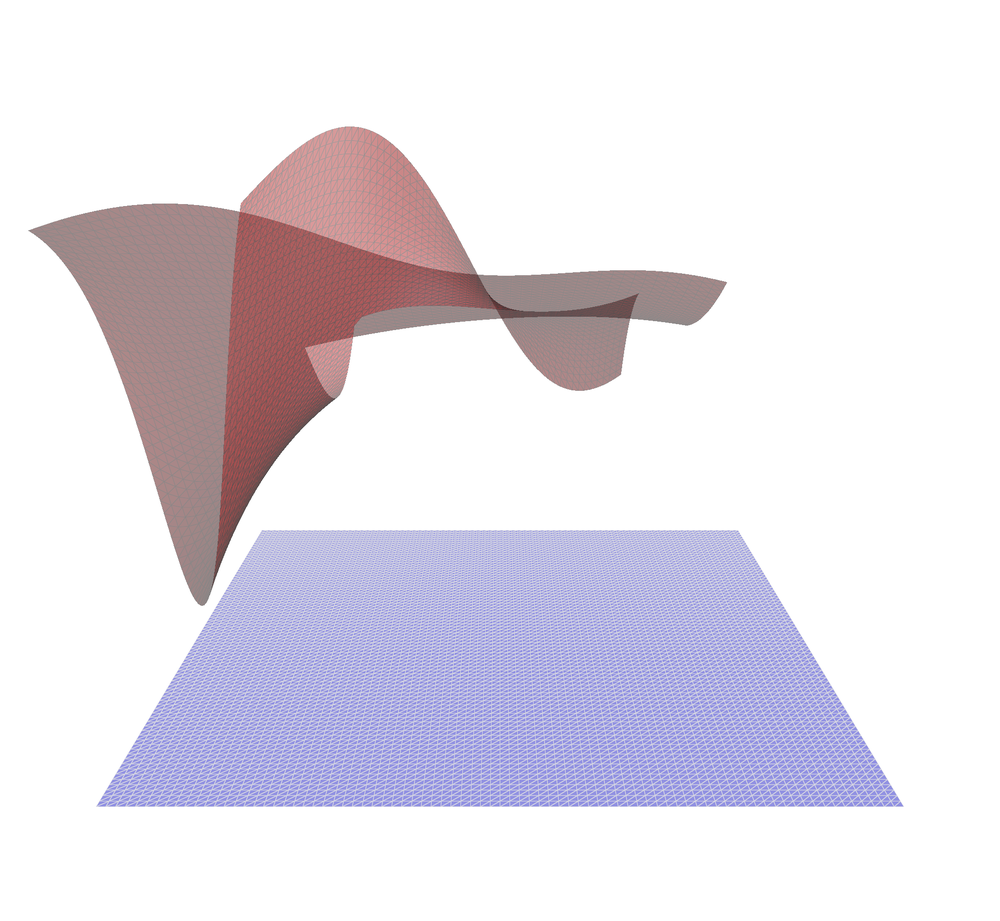}
\includegraphics[width=0.3\linewidth]{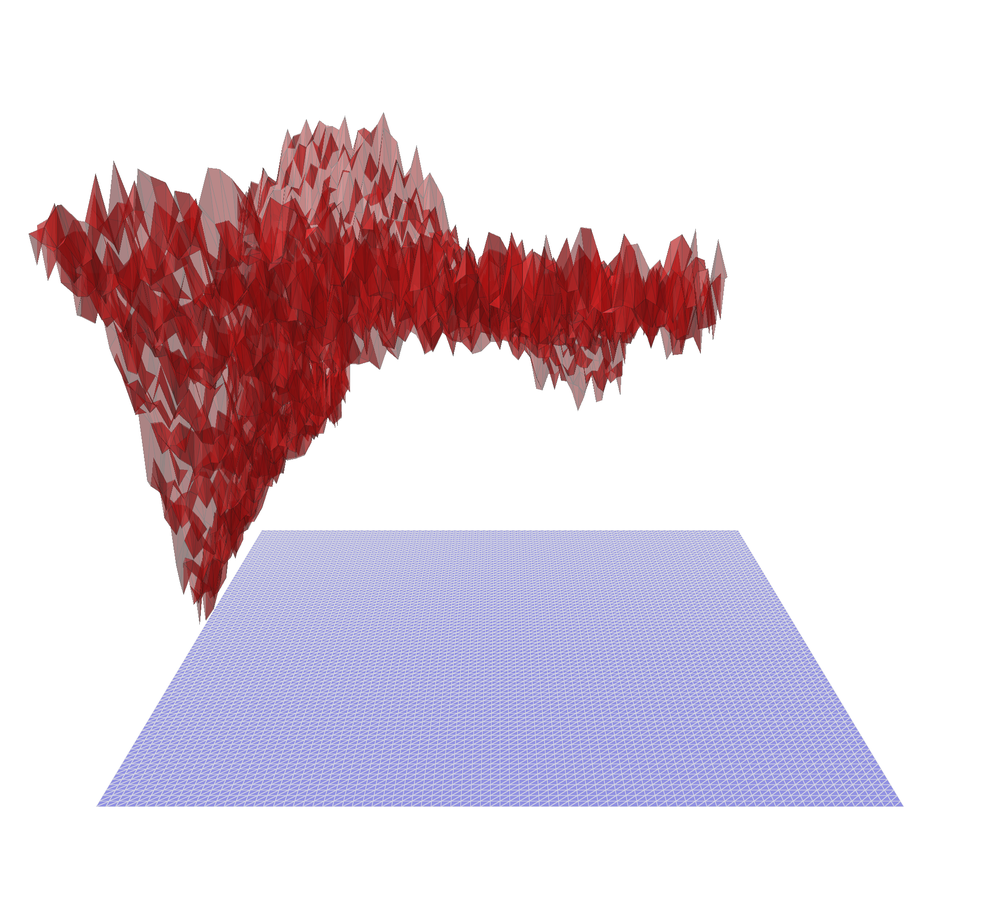}
\includegraphics[width=0.3\linewidth]{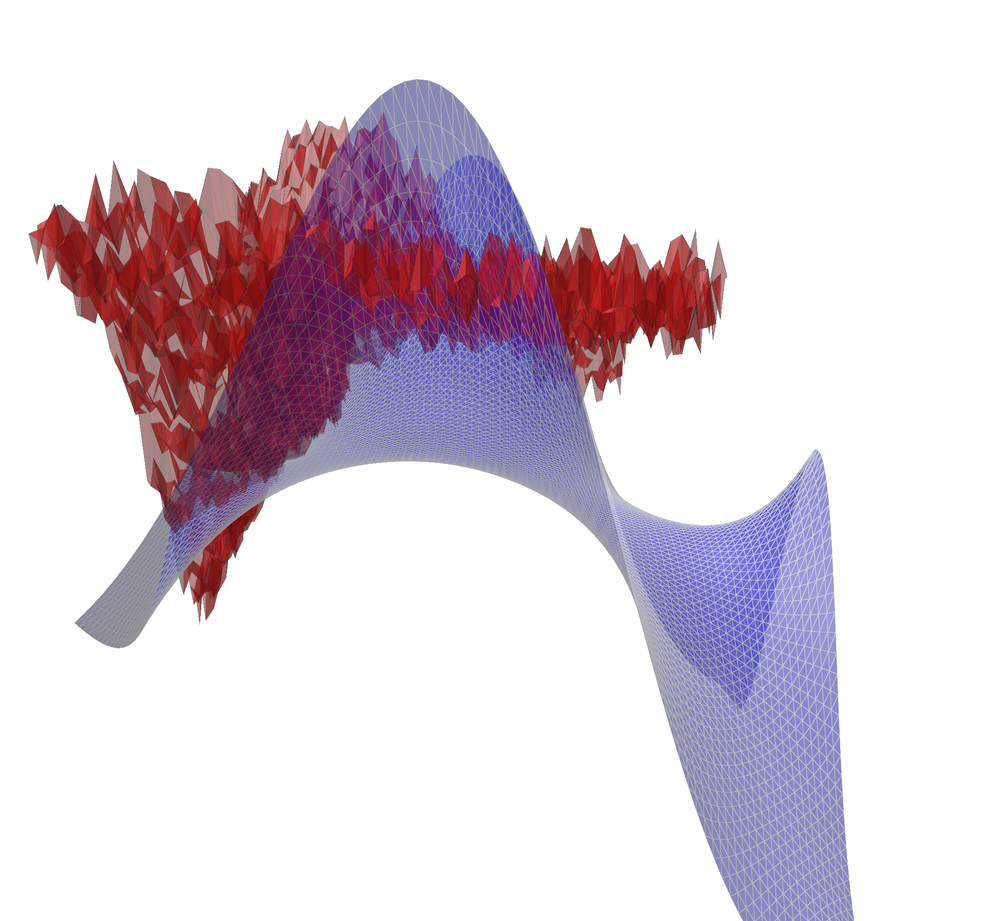}
\includegraphics[width=0.3\linewidth]{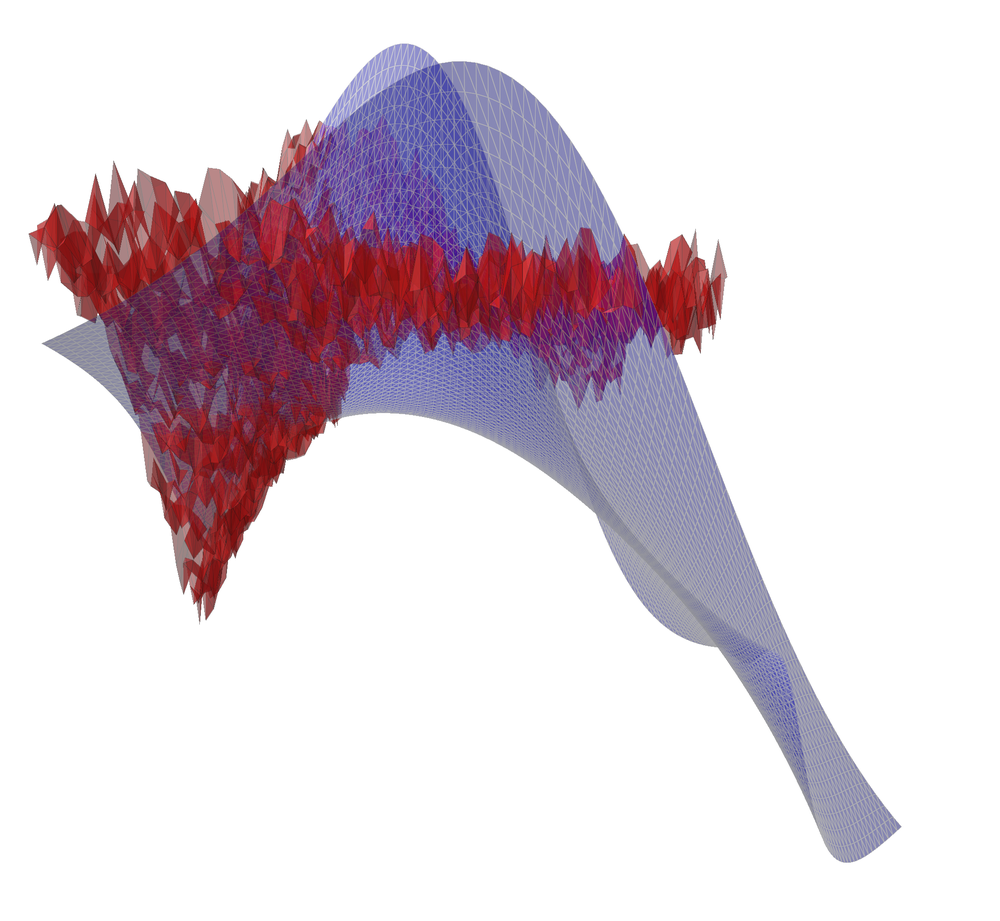}
\includegraphics[width=0.3\linewidth]{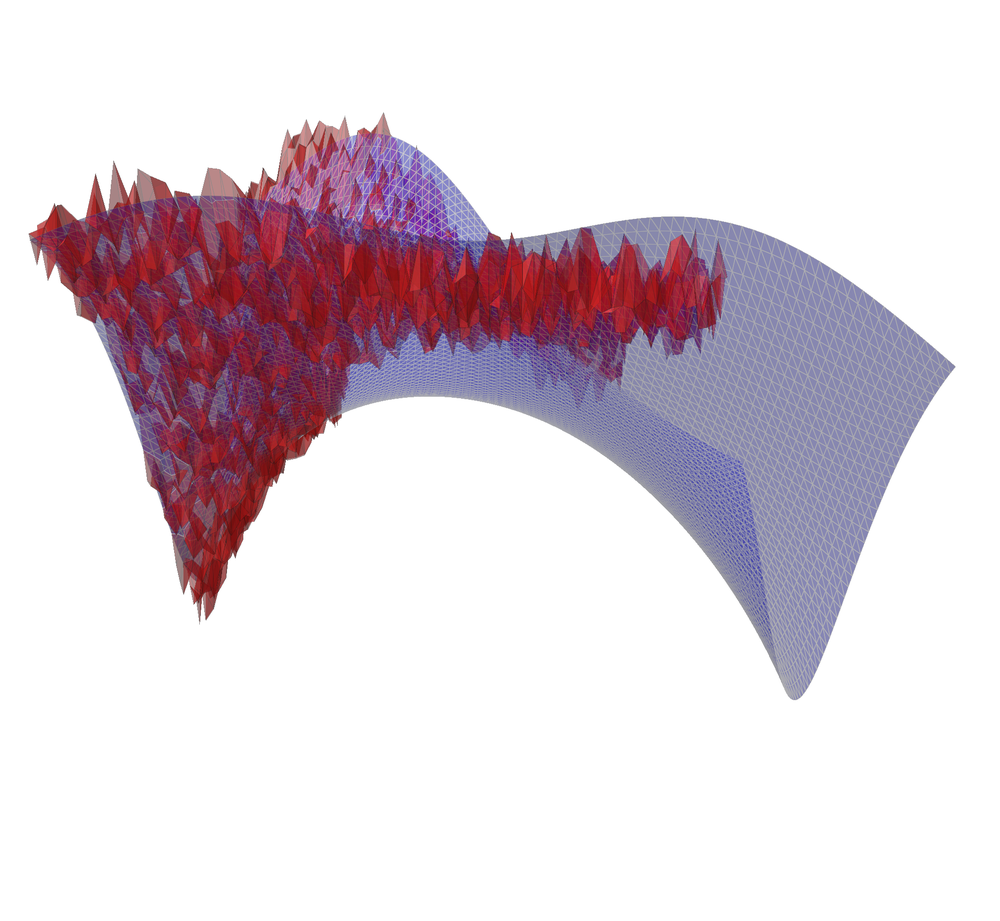}
\includegraphics[width=0.3\linewidth]{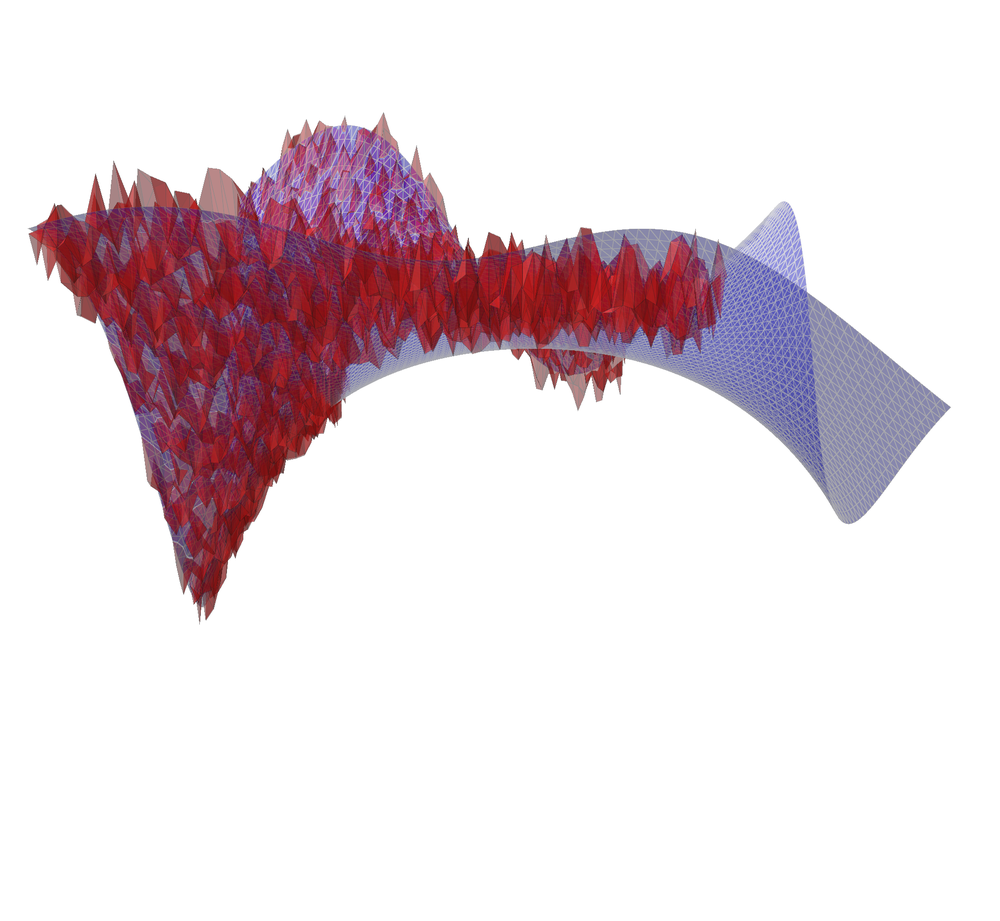}
\includegraphics[width=0.3\linewidth]{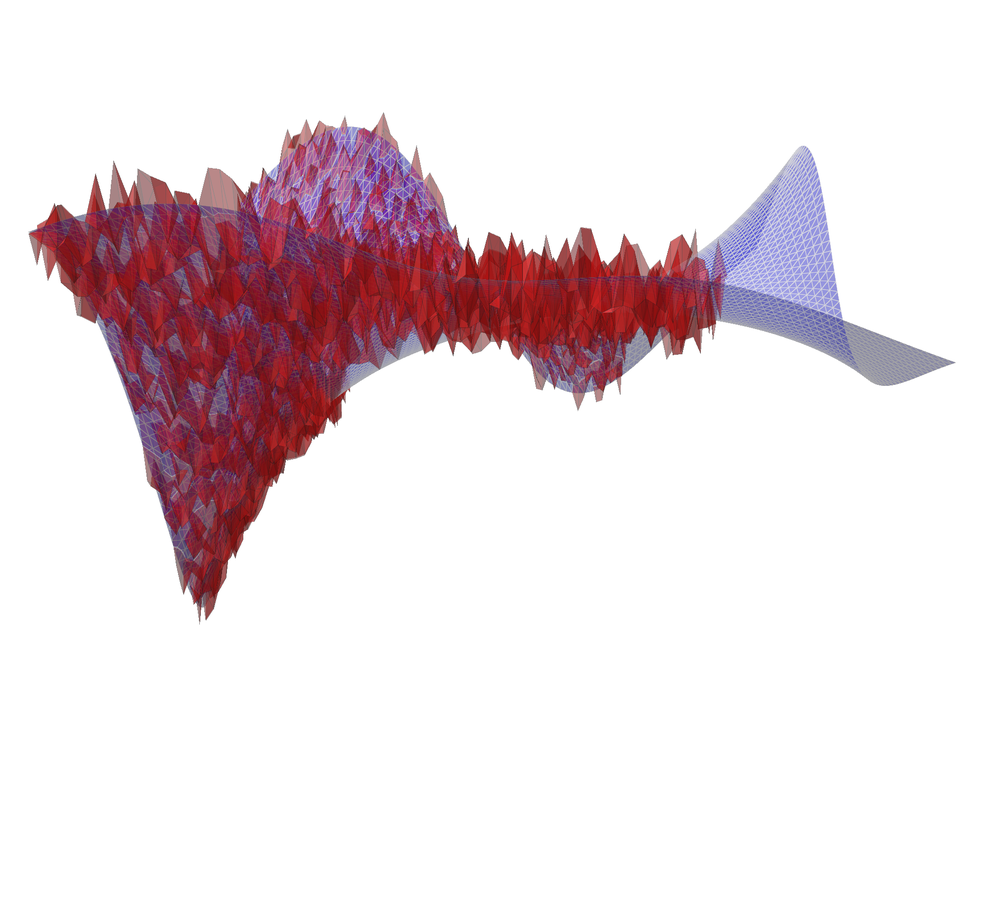}
\includegraphics[width=0.3\linewidth]{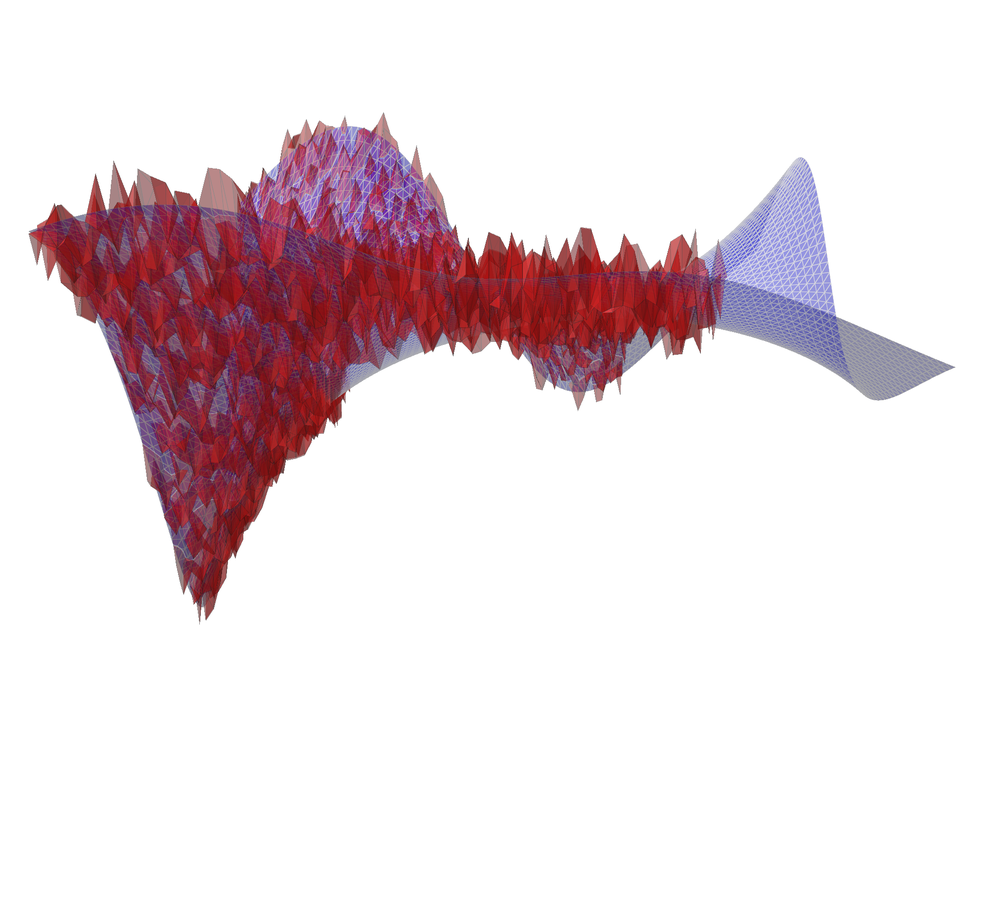}
\includegraphics[width=0.3\linewidth]{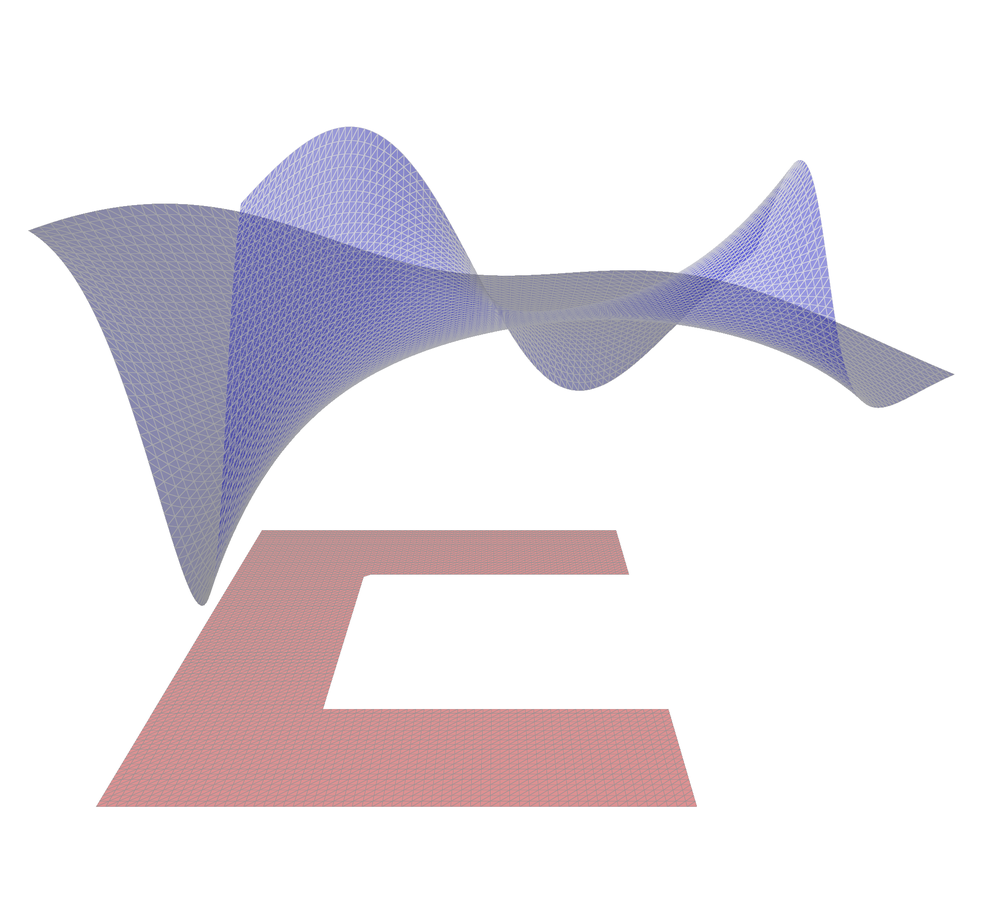}
\caption{Optimization process over a 9-dimensional \emph{coefficient space} for a \emph{linear} inverse problem with noisy data. Here, the FEM interior basis functions in Figure~\ref{fig_lindata_podbfs} are used. The unperturbed data is shown in the first frame. The second frame is the same as the first but with added noise sampled from $\mcU(-0.05, 0.05)$. The last frame shows $\omega$ and the reference solution used for the data which was obtained by taking $c_1 = 10, c_9 = 3.023,$ and all other $c_n$'s = 0 in \eqref{eq_linlin_lc}. The penultimate frame shows the optimization's MSE-converged reconstruction of the reference solution. The MSE converged after 861 iterations with the Adam optimizer with a step size = 0.1. This took 10.3 s on an Apple M1 CPU. The MSE's between the reference solution and the converged reconstruction are: on $\omega$ (used in optimization), $\text{MSE}_{\omega}$ = 8.28e-4; on the convex hull of $\omega$, $\text{MSE}_{\text{co}(\omega)}$ = 9.12e-4; and on its complement, $\text{MSE}_{\text{co}(\omega)^c}$ = 2.66e-3.}
\label{fig_linlin_optprocess}
\end{figure}

\subsection{Nonlinear Operator with Linear Data} \label{sec_exs_nold}

We again consider the linear data sets from the previous section, but here together with a \emph{nonlinear} differential operator. Because of the nonlinearity, we cannot use the FEM interior basis and the superposition principle as in the fully linear case. Instead, we use a neural network to approximate the solution operator, i.e., the inverse of the nonlinear differential operator. The solution is still in the form of a finite element function, so the output of the network gives an approximation of the finite element solution. The input to the network is the POD coefficients $(p_1, ..., p_N)$ corresponding to the same significant POD boundary basis functions as in the fully linear case. We use the following nonlinear energy functional as the foundation for the loss function during training of the network.
\begin{equation}
E(v) = \int_{\Omega} \frac{1}{2}(1 + v^2)|\nabla v|^2 \mathop{}\!\mathrm{d} x
\end{equation} 
This functional corresponds to the nonlinear differential operator whose inverse (the solution operator) we want to approximate with the neural network. We use a simple multilayer perceptron network architecture with 4 hidden layers of the same width X and an output layer of width O representing the finite element DoFs. For standard P1 elements considered here it is simply the finite element function's nodal values. We use the exponential linear unit (ELU) as the activation function in the 4 hidden layers and no activation function in the last layer. A schematic illustration of this network is provided in Figure~\ref{fig:networks-operator}.

In each iteration during the training, we pick a fixed number (referred to as the batch size) of randomly selected coefficient vectors and use them to compute an average loss. The coefficient values are picked from $\mcN(0, 0.09)$. The optimization is performed with the Adam optimizer where we perform $10^6$ iterations with a decreasing learning rate. The learning rate starts at 1e-4, and after every 250k iterations, it is decreased by a factor of~0.5.

To measure the well-trainedness of the network, we, as an initial guiding measure, use the zero energy $E(\phi_{u,N,h}(0))$, i.e., the value of the computed energy using the output from the network when an all zero vector is given as input. This, of course, corresponds to homogeneous Dirichlet boundary conditions and gives that the solution $u = 0$ and thus that $E(0) = 0$. We also perform more rigorous studies of well-trainedness by computing the actual finite element solution with FEniCS \cite{LoggEtal2012} and comparing it to the network approximation. This is done by computing their average norm difference over 1000 problems, where for each problem we randomly select a coefficient vector with values from $\mcN(0, 0.09)$. The difference is computed in both the $H_0^1$-norm ($H^1$-seminorm) and the $L^2$-norm. We also compute both the absolute and the relative norm differences, where the relative norm difference is the absolute difference divided by the norm of the finite element solution.

For the numerical examples we have again considered the two different coefficient vector lengths (9 and 21) and the four meshes from the linear case in the previous section. The network architectures and batch sizes used during training are given in Table~\ref{table:architectures}.

\begin{table}
  \caption{Network architectures and batch sizes used for the various mesh sizes. DoFs refers to the number of DoFs in the finite element space $V_h$, which is the dimension of the MLP's output vector. Width refers to the width of the four hidden layers in the MLP.}
  \label{table:architectures}
    \centering
    \begin{tabular}{lccc}
    \toprule
     Mesh & DoFs (O) & Width (X) & Batch size \\
    \midrule
    10x10 & 81 & 64 & 32 \\
    28x28 & 729 & 256 & 64 \\ 
    82x82 & 6561 & 512 & 64 \\ 
    244x244 & 59049 & 1024 & 96 \\ 
    \bottomrule
  \end{tabular}%
\end{table}

The hyperparameters of the network (number of layers, width, activation function) and training settings (optimizer, number of iterations, decreasing learning rate, batch size, etc.) have been obtained by trial and error, where we have looked at the zero energy. An intuition for the size of the network (number of layers and widths) is that it needs to be large enough to provide a good approximation of the solution operator, but sufficiently small so that training is feasible and economical. An intuition for using ELU is that it is smoother than many other activations, e.g., the commonly used ReLU, which is to some degree in accordance with PDE- theory, where the solution is expected to depend smoothly on the problem data, e.g., boundary values.

In Table~\ref{table:training-info}, we present training info for the four mesh sizes for coefficient vector length $N=9$ and $N=21$. The training has been performed on a single A100 GPU. For the largest mesh case (244x244), we have not been able to train with all elements present in the energy functional loss function (It has resulted in a NaN loss function value). To make it work, we have employed the trick of randomly selecting a fixed number of elements for every input vector during training, and only considering the energy functional contribution from those elements. The number of elements used is denoted ``Els'' in Table~\ref{table:training-info}. It can be observed from the zero energies and norm errors in Table~\ref{table:training-info} that the operator networks generally become more accurate with finer meshes \emph{if} all elements are used in the energy computation. In both cases with the 244x244 mesh, the trend in higher accuracy is broken. This is reasonable since only a few elements, instead of all, are used in the energy computation for the loss function.

\begin{table}
  \caption{Training info from using an A100 GPU.}
  \label{table:training-info}
  \begin{subtable}{\linewidth}
    \centering
    \caption{Input data size $N = 9$}
    \label{tab_ONtraininginfo_coeffs9}  
    \begin{small}%
    \begin{tabular}{lccccccc}
    \toprule
     Mesh & Els & \shortstack{Training \\ time} & \shortstack{GPU \\ Util} & \shortstack{Inference \\ time} & $E(\phi_{u,N,h}(0))$ & \shortstack{$H_0^1$-error \\ 1k-avg (rel)} & \shortstack{$L^2$-error \\ 1k-avg (rel)} \\
    \midrule
    10x10 & All & 358 s & 45\% & 0.8 ms & 3.9e-5 & 9.4e-3 (1.87\%) & 4.3e-4 (0.47\%) \\
    28x28 & All & 337 s & 73\% & 0.8 ms & 1.2e-6 & 2.1e-3 (0.7\%) & 9.7e-5 (0.18\%) \\ 
    82x82 & All & 615 s & 100\% & 0.8 ms & 3.2e-7 & 1.1e-3 (0.6\%) & 3.0e-5 (0.1\%) \\ 
    244x244 & 3k & 2673 s & 100\% & 0.7 ms & 5.2e-5 & 1.4e-2 (14.1\%) & 9.6e-5 (0.5\%) \\ 
    \bottomrule
  \end{tabular}%
  \end{small}%
\end{subtable}

\vspace{2ex}

\begin{subtable}{\linewidth}
  \centering
  \caption{Input data size $N = 21$}
\label{tab_ONtraininginfo_coeffs21}
\begin{small}%
\begin{tabular}{lccccccc}
  \toprule
   Mesh & Els & \shortstack{Training \\ time} & \shortstack{GPU \\ Util} & \shortstack{Inference \\ time} & $E(\phi_{u,N,h}(0))$ & \shortstack{$H_0^1$-error \\ 1k-avg (rel)} & \shortstack{$L^2$-error \\ 1k-avg (rel)} \\
  \midrule
  10x10 & All & 339 s & 47\% & 0.8 ms & 8.9e-5 & 1.6e-2 (1.23\%) & 1.2e-3 (1.07\%) \\
  28x28 & All & 354 s & 69\% & 0.8 ms & 5.1e-6 & 5.5e-3 (0.73\%) & 2.9e-4 (0.44\%) \\ 
  82x82 & All & 617 s & 100\% & 0.8 ms & 3.7e-6 & 3.6e-3 (0.82\%) & 8.4e-5 (0.22\%) \\ 
  244x244 & 4k & 2733 s & 100\% & 0.8 ms & 1.0e-4 & 2.5e-2 (9.8\%) & 1.6e-4 (0.72\%) \\
  \bottomrule
\end{tabular}%
\end{small}
\end{subtable}
  \end{table}

With these neural networks we may solve the inverse minimization problem over the coefficient space. In Figure~\ref{fig_nonlinlin_optprocess}, a demonstration of this process is presented for the case of 21 input coefficients and the 244x244 mesh, i.e., the neural network whose training info is presented in the last row of Table~\ref{tab_ONtraininginfo_coeffs21}. In Figure~\ref{fig_nonlinlin_optprocess} it can clearly be observed that the approximate finite element solution given by the operator network approaches the noisy data and the reference solution as the optimization progresses.
\begin{figure}
\centering
\includegraphics[width=0.3\linewidth]{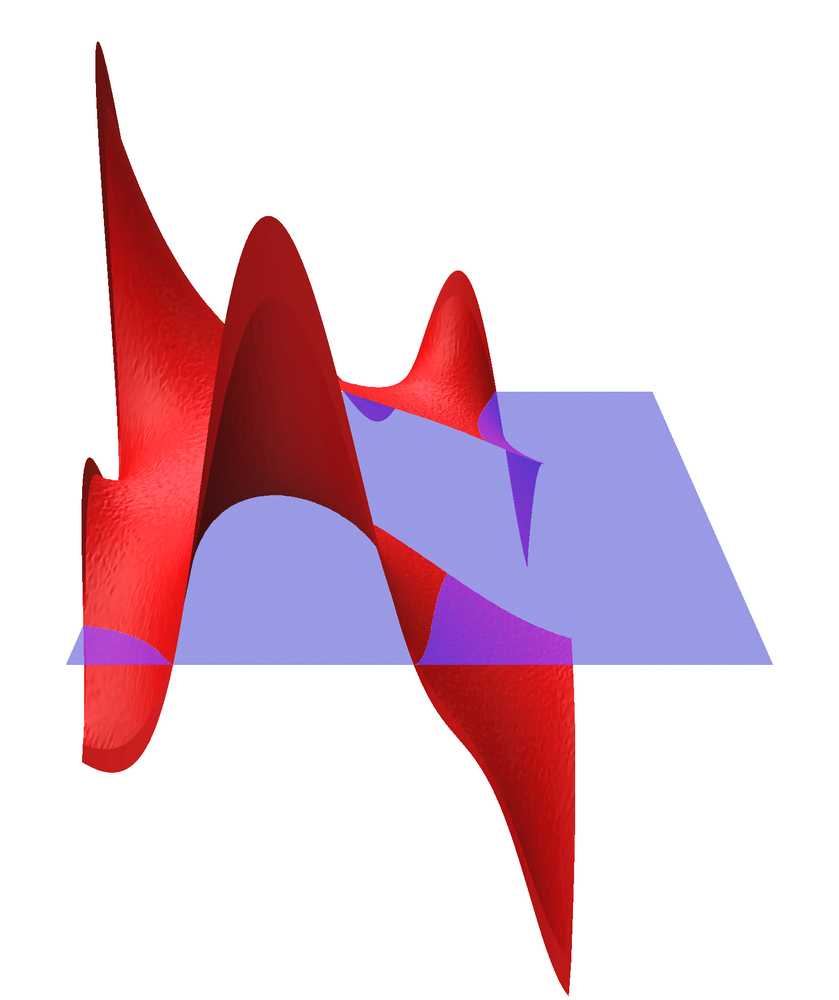}
\includegraphics[width=0.3\linewidth]{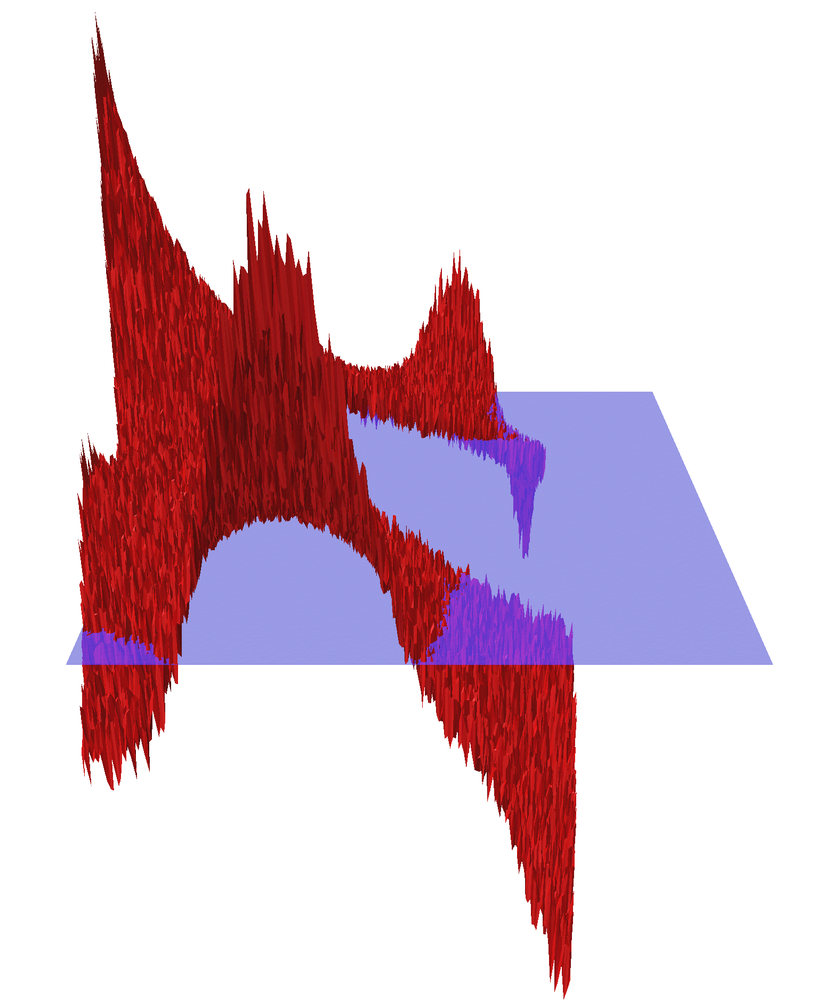}
\includegraphics[width=0.3\linewidth]{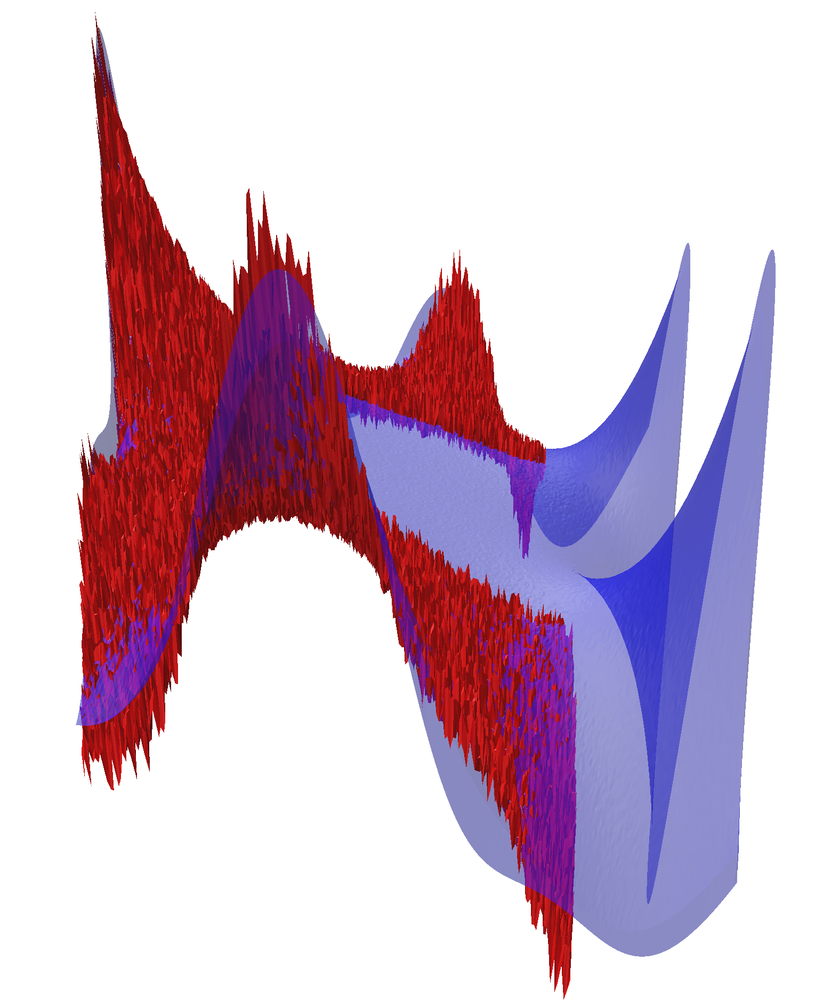}
\includegraphics[width=0.3\linewidth]{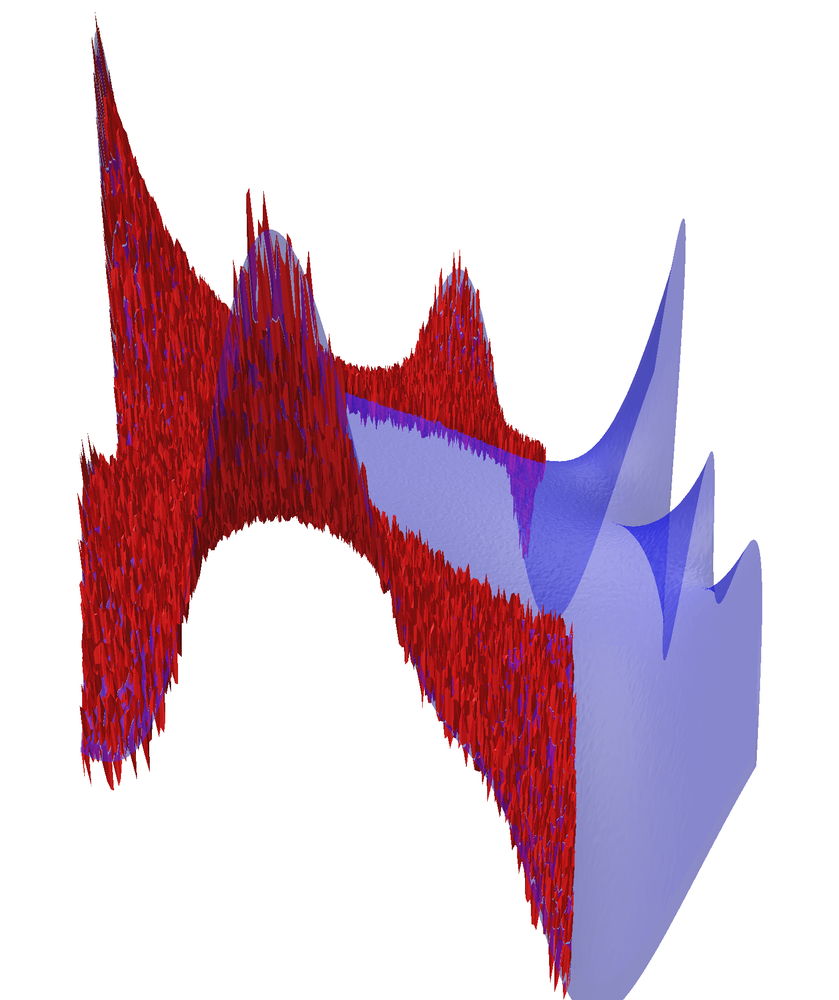}
\includegraphics[width=0.3\linewidth]{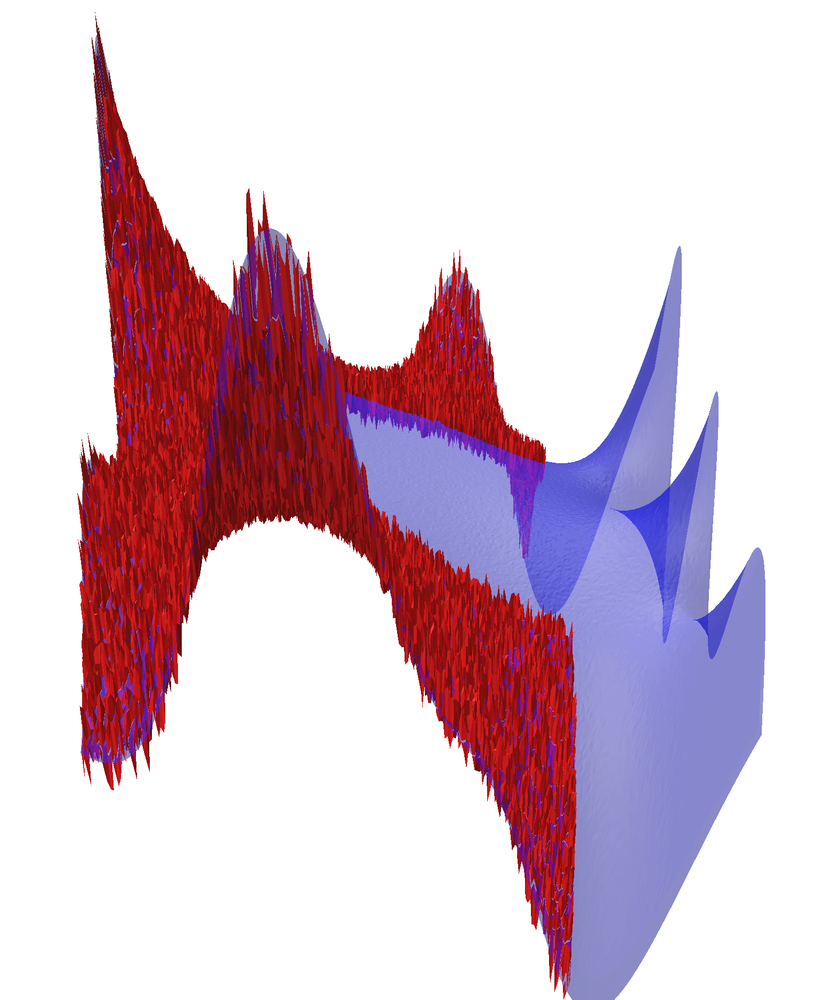}
\includegraphics[width=0.3\linewidth]{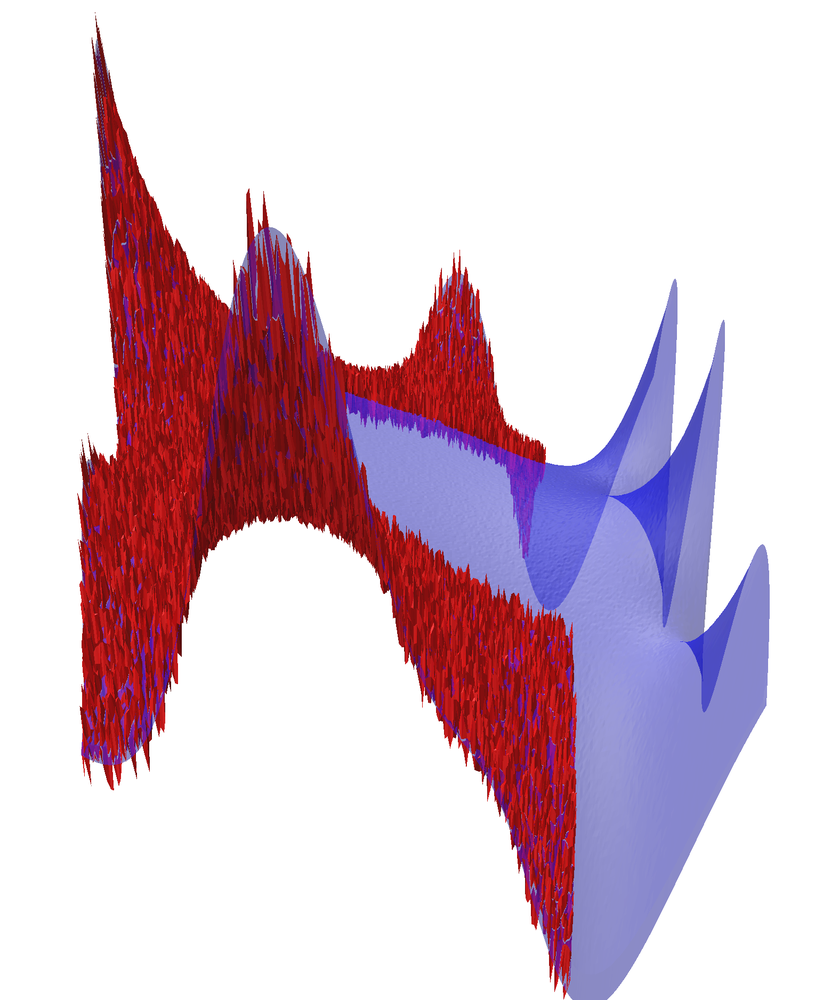}
\includegraphics[width=0.3\linewidth]{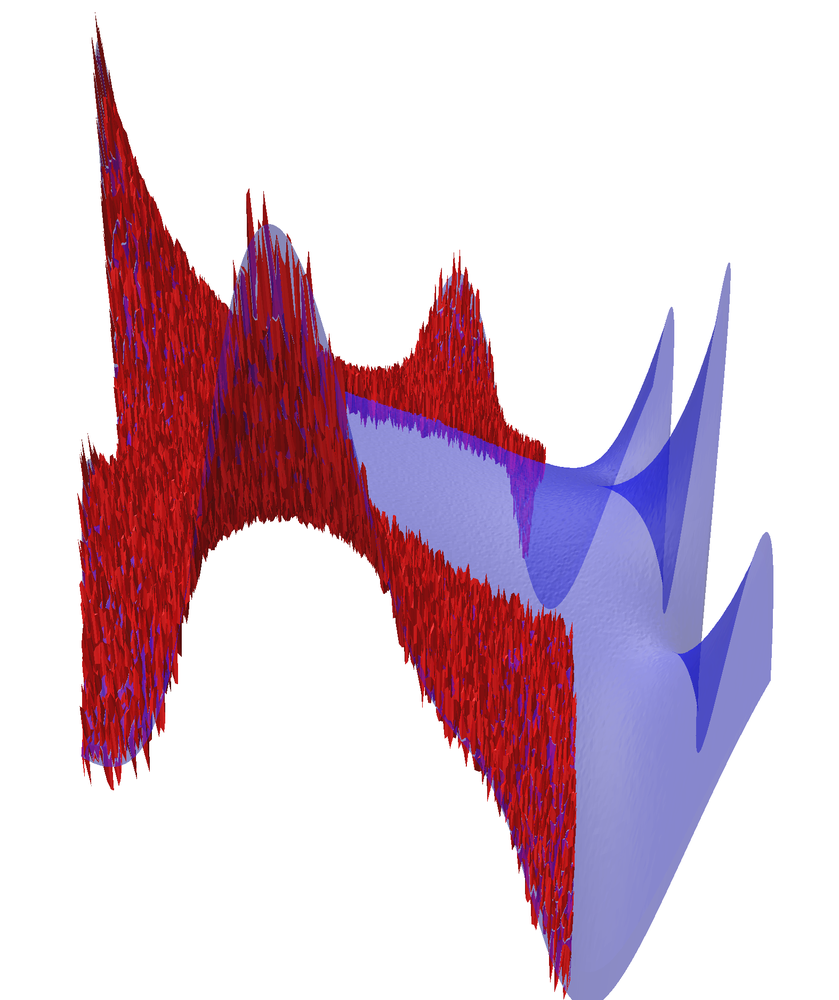}
\includegraphics[width=0.3\linewidth]{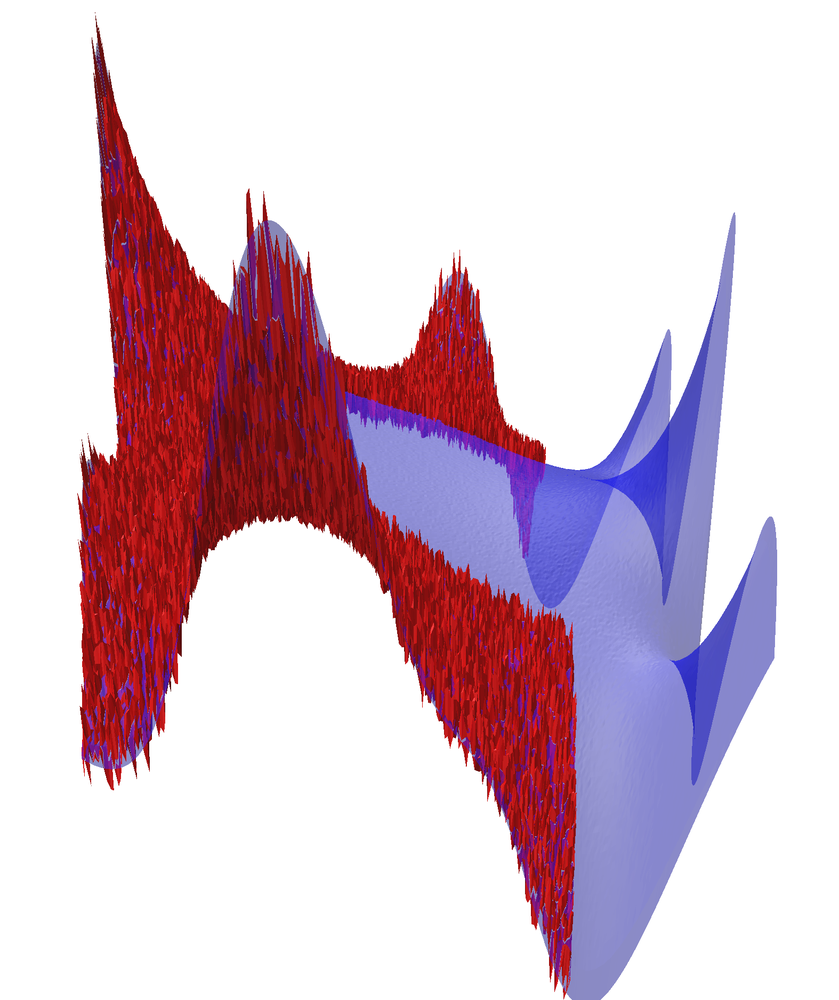}
\includegraphics[width=0.3\linewidth]{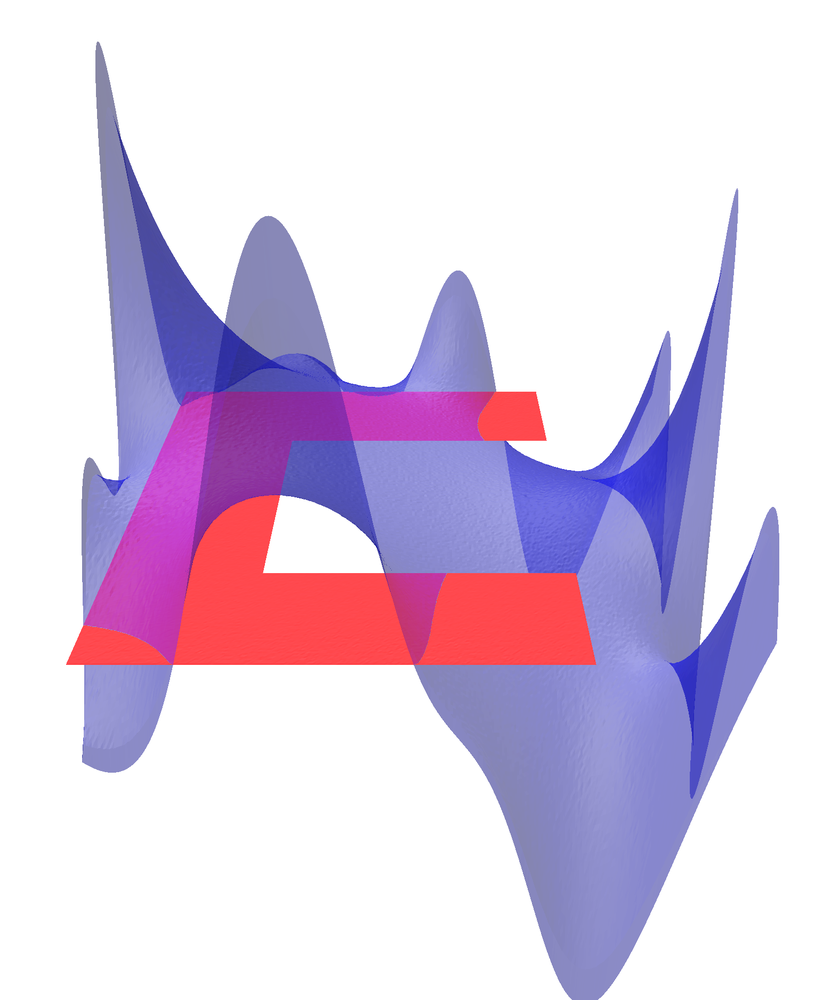}
\caption{Optimization process over a 21-dimensional \emph{coefficient space} for a \emph{nonlinear} inverse problem with noisy data. Here, the operator network in the last row of Table~\ref{tab_ONtraininginfo_coeffs21} (21 input coefficients, 59049 output DoFs) is used. The unperturbed data is shown in the first frame. The second frame is the same as the first but with added noise sampled from $\mcU(-0.05, 0.05)$. The last frame shows $\omega$ and the reference solution used for the data which was obtained by taking $p_{14} = 10$ and all other $p_n$'s = 0. The penultimate frame shows the optimization's MSE-converged reconstruction of the reference solution. The MSE converged after 2843 iterations with the Adam optimizer with a step size = 0.1. This took 140.2 s on an Apple M1 CPU. The MSE's between the reference solution and the converged reconstruction are: on $\omega$ (used in optimization), $\text{MSE}_{\omega}$ = 8.36e-4; on the convex hull of $\omega$, $\text{MSE}_{\text{co}(\omega)}$ = 8.38e-4; and on its complement, $\text{MSE}_{\text{co}(\omega)^c}$ = 1.91e-3.}
\label{fig_nonlinlin_optprocess}
\end{figure}

\subsection{Nonlinear Operator with Nonlinear Data}

We consider the same nonlinear differential operator with the same neural networks as in the previous section but here we add complexity by introducing an underlying nonlinear dependence on the input coefficients to the network. To construct such a nonlinear dependence we may pick a smooth function $a: X \rightarrow \IR^{|J|}$, where $X$ is a parameter domain in $\IR^{n_X}$, and for some index set $I$, consider boundary data of the form
\begin{align}
\mcG_a = \Bigl\{g_i = \sum_{j \in J } (a_j(x_i) + \delta_j) \varphi_{\mathrm{POD},j} \,|
\, i \in I \Bigr\}
\end{align}
where $\delta_j$ is some small probabilistic noise and $\{x_i\in X \,|\, i \in I\}$ is a set of samples from the parameter space $X$ 
equipped with a probability measure. In this case, we expect an autoencoder with a latent space $Z$ of at least the same dimension as $X$ to perform well.

\paragraph{Polynomial Data}
We consider a simple polynomial example where the coefficients $a \in \IR^{|J|}$ depend on the parameter variables $x \in \IR^{n_X}$ as 
\begin{align}
a = a(x) = A x + B x^2 + \delta
\end{align}
Here the matrices $A, B \in \IR^{|J| \times n_X}$ and their entries are randomly sampled from a uniform distribution. The perturbations $\delta \in \IR^{|J|}$ are sampled from a normal distribution.

For the numerical results we take ${|J|} = 9$, $n_X = 3$ and sample matrix entries from $\mcU(-1, 1)$ which are then held fixed. To generate coefficient vectors, we sample parameter variables $x_k \sim \mcU(-2, 2)$ and perturbations $\delta_j \sim \mcN(0, 1)$. We consider two cases: the linear case with $B = 0$ and the quadratic case with $B \neq 0$. To get a sense of what the data look like, we plot the coefficients as functions of the parameters for both cases in Figure~\ref{fig_nonlindata_poly_coeffs}.
\begin{figure}
\centering
\includegraphics[width=0.49\linewidth]{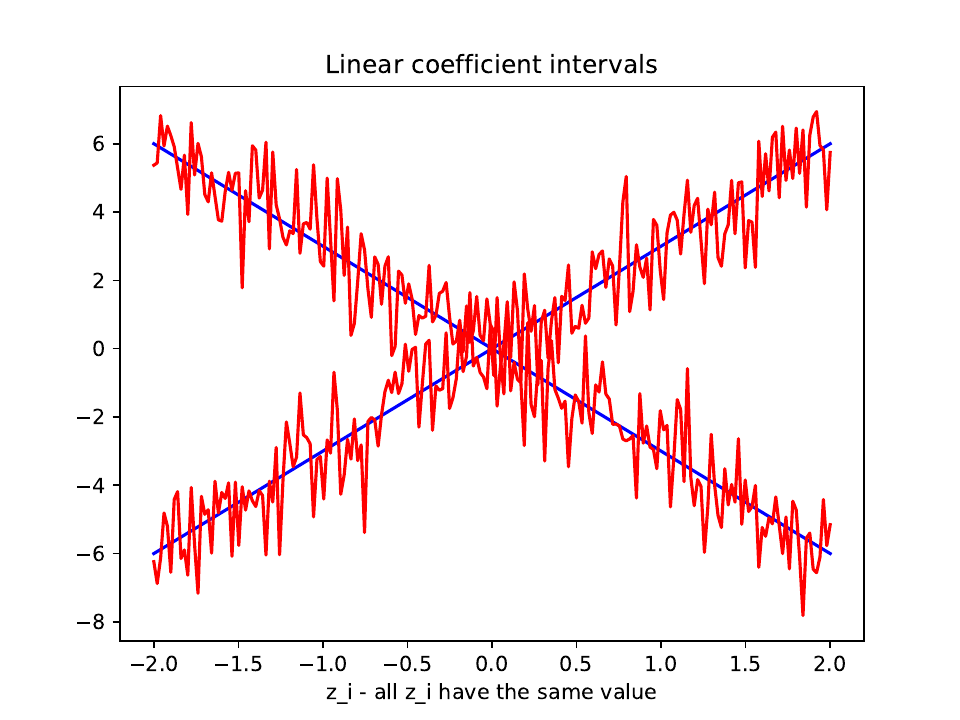}
\includegraphics[width=0.49\linewidth]{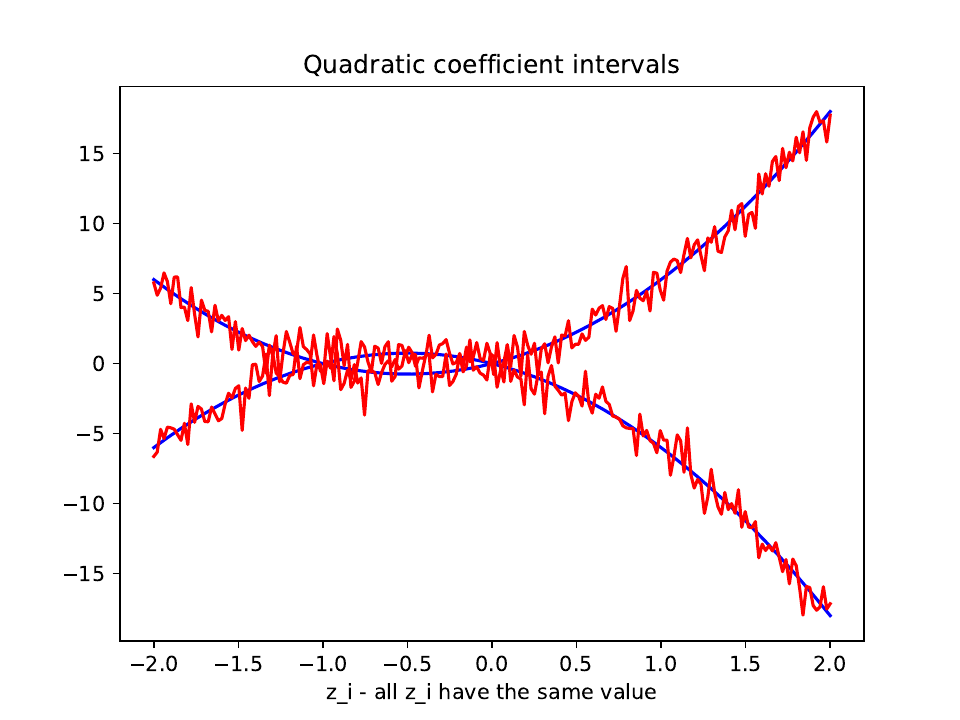}
\caption{Coefficients versus parameters for the linear case (left) and the quadratic case (right). All parameters have the same value which varies between $-2$ and $2$. For each case, there is an upper and lower bound for the coefficients obtained by taking all matrix entries to either have the minimum value $-1$ or the maximum value $1$. Both bounds are also shown as unperturbed (blue) and perturbed (red).}
\label{fig_nonlindata_poly_coeffs}
\end{figure}
We analyze data generated for the linear case with PCA and data generated for the quadratic case with both PCA and autoencoders. For the autoencoders we have used MLPs with 6 layers (5 hidden, 1 output) with the third layer being the latent layer. The latent layer width has been varied and the remaining hidden layer widths have all been fixed at 64. The activation function ELU has been applied to all layers except the last. The training has been performed with the Adam optimizer exactly as for the operator networks, i.e., $10^6$ iterations with a decreasing learning rate. The batch size has been 64. The hyperparameters of the autoencoders and training settings have, just as in the case of the operator networks, been obtained by trial and error. To measure well-trainedness, we have looked at the average mean squared reconstruction error over 1000 unperturbed samples generated in the same way as during training. The training time for a single autoencoder (fixed latent layer width) on an Apple M1 CPU has typically been in the range 240 -- 270 s.

The results from both the PCA and autoencoder analysis are presented in Figure~\ref{fig_nonlindata_poly_pcaAE}. The PCA results give a 3-dimensional latent space in the linear case and a 6-dimensional in the quadratic. This is evident from the number of significant singular values for the different cases. The autoencoder results suggest the existence of both a 3- and a 6-dimensional latent space in the quadratic case. This can be deduced from the two plateaus for the two perturbed cases: one at latent layer widths 3 -- 5 and one at 6 -- 8. The autoencoders thus manage to find the underlying 3-dimensional structure in the quadratic case whereas PCA does not.
\begin{figure}
\centering
\includegraphics[width=0.49\linewidth]{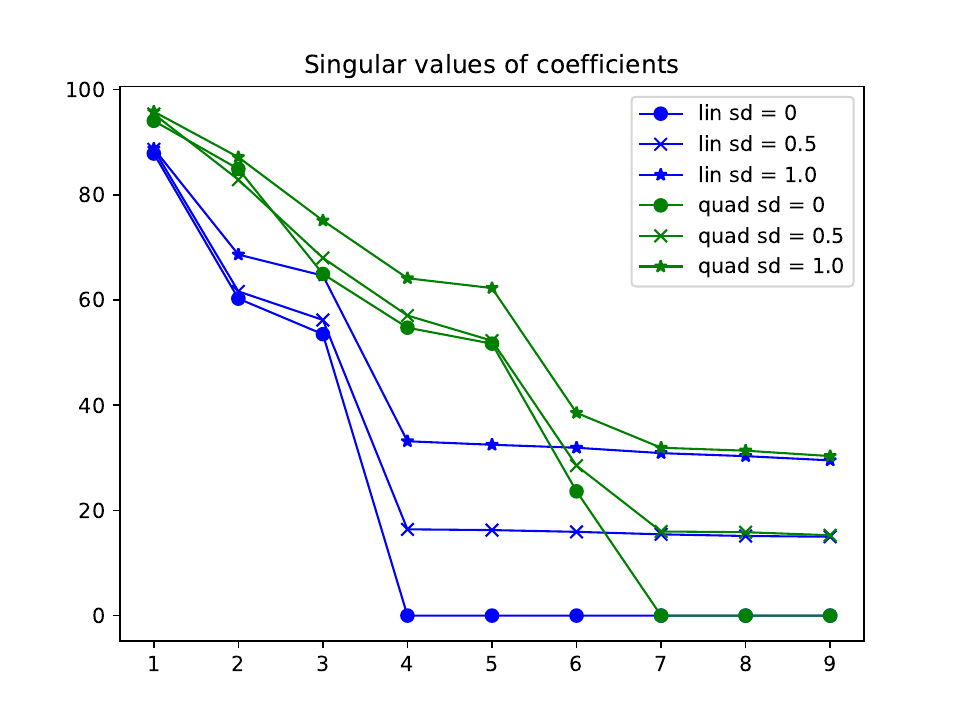}
\includegraphics[width=0.49\linewidth]{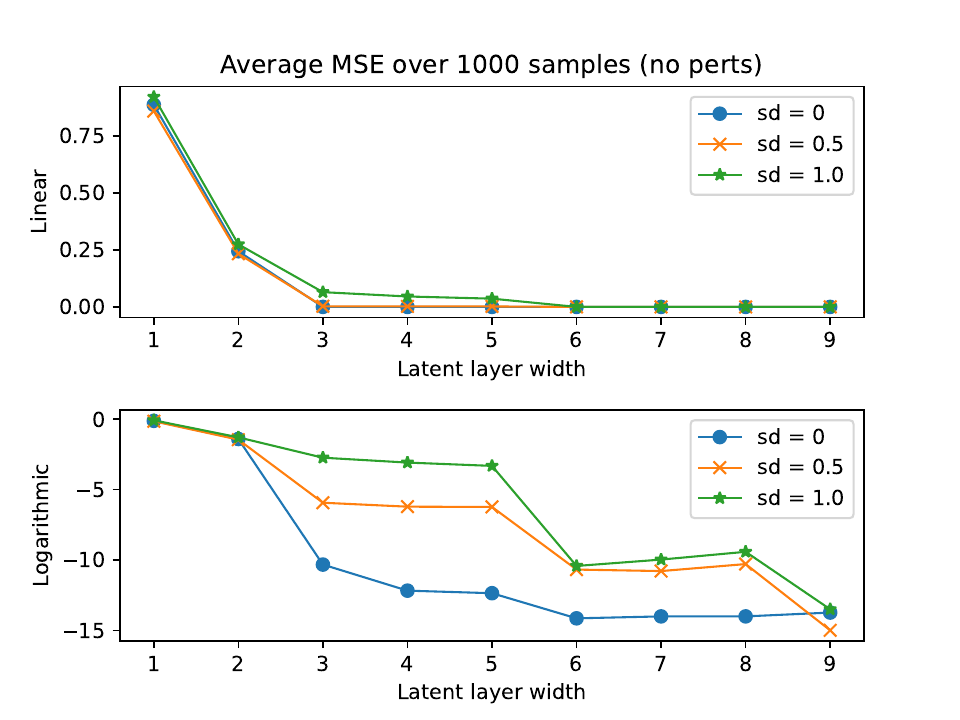}
\caption{\textbf{Left:} PCA results for both the linear and quadratic case. The plots show the singular values of the coefficients in decreasing order for three different perturbations: unperturbed and two perturbed (standard deviation = 0.5 and 1). \textbf{Right:} Autoencoder results for the quadratic case for the same three perturbations as PCA but used during training. The plots show the average mean squared reconstruction error over unperturbed test data for different latent layer widths both on a linear and logarithmic scale.}
\label{fig_nonlindata_poly_pcaAE}
\end{figure}

\paragraph{Gaussian Data}
We consider a more advanced nonlinear example where the coefficients $a \in \IR^{|J|}$ depend on the parameter variables $x \in \IR^{n_X}$ as 
\begin{align}
a_j = a_j(x) = \exp(-\gamma(x_k - x_{0, l} )^2) + \delta_j
\label{eq_gaussdata_coeffs}
\end{align}
Here we have $L$ number of equidistant Gaussian bell curves indexed by $l$ where each coefficient is assigned exactly one bell curve with midpoint $x_{0, l}$ and exactly one parameter $x_k$ according to $l = j \mod L$ and $k = j \mod n_X$, respectively. The perturbations $\delta \in \IR^{|J|}$ are sampled from a normal distribution.

For the numerical results we take $\gamma = 2$ and sample perturbations $\delta_j \sim \mcN(0, 0.0225)$ (standard deviation = 0.15). We consider four cases:
\begin{itemize}
\item $(n_X, L) = (2, 5)$ with $x_0 = (0, 4, 8, 12, 16)$ and $x_k \sim \mcU(-2, 18)$
\item $(n_X, L) = (3, 6)$ with $x_0 = (0, 2, 4, 6, 8, 10)$ and $x_k \sim \mcU(-2, 12)$
\item $(n_X, L) = (3, 7)$ with $x_0 = (0, 2, 4, 6, 8, 10, 12)$ and $x_k \sim \mcU(-2, 14)$
\item $(n_X, L) = (4, 8)$ with $x_0 = (0, 2, 4, 6, 8, 10, 12, 14)$ and $x_k \sim \mcU(-2, 16)$
\end{itemize}
We analyze data generated for these cases with both PCA and autoencoders. For the autoencoders we have used MLPs with 5 layers (4 hidden, 1 output) with the middle layer being the latent layer. The latent layer width has been varied and the remaining hidden layer widths have all been fixed at 64. The activation function ELU has been applied to all layers except the last. The training has been performed with the Adam optimizer exactly as for the operator networks, i.e., $10^6$ iterations with a decreasing learning rate. The batch size has been 64. Again, the hyperparameters of the autoencoders and training settings have been obtained by trial and error, where we have looked at the average mean squared reconstruction error over 1000 unperturbed samples generated in the same way as during training. The training time for a single autoencoder (fixed latent layer width) on an Apple M1 CPU has typically been in the range 210 -- 250 s.

The bell curves for the coefficients, PCA results and autoencoder results are presented in Figure~\ref{fig:gaussian-data-example}. The PCA results show something interesting. If the number of bell curves $L$ is divisible by the latent dimension $n_X$, PCA gives that the underlying structure has dimension $L$. If $L$ is \emph{not} divisible by $n_X$, PCA instead gives that this dimension is $n_X L$. For example, for $(n_X, L) = (2, 5)$ in Figure~\ref{fig_nonlindata_gauss_ldim2_gb5}, PCA gives latent dimension = 10, and for $(n_X, L) = (3, 6)$ in Figure~\ref{fig_nonlindata_gauss_ldim3_gb6}, PCA gives latent dimension = 6. This phenomenon is easily understood by the number of unique combinations of latent parameters $x_k$ and bell curves, characterized by $x_{0, l}$, in the construction of the coefficients given by \eqref{eq_gaussdata_coeffs}. The autoencoder results all suggest the existence of latent spaces of a lower dimension than given by PCA. This is most clearly seen from the existence of plateaus for the two perturbed cases (standard deviation = 0.075 and 0.15) on the logarithmic scale in all four cases. However, the suggested latent dimension does match the actual one as well as in the previous example with polynomial data, hinting at the higher complexity of the Gaussian data. This is especially true in the cases where $n_X$ does not divide $L$.

\begin{figure}
\begin{subfigure}[t]{\linewidth}
\centering
\includegraphics[width=0.32\linewidth]{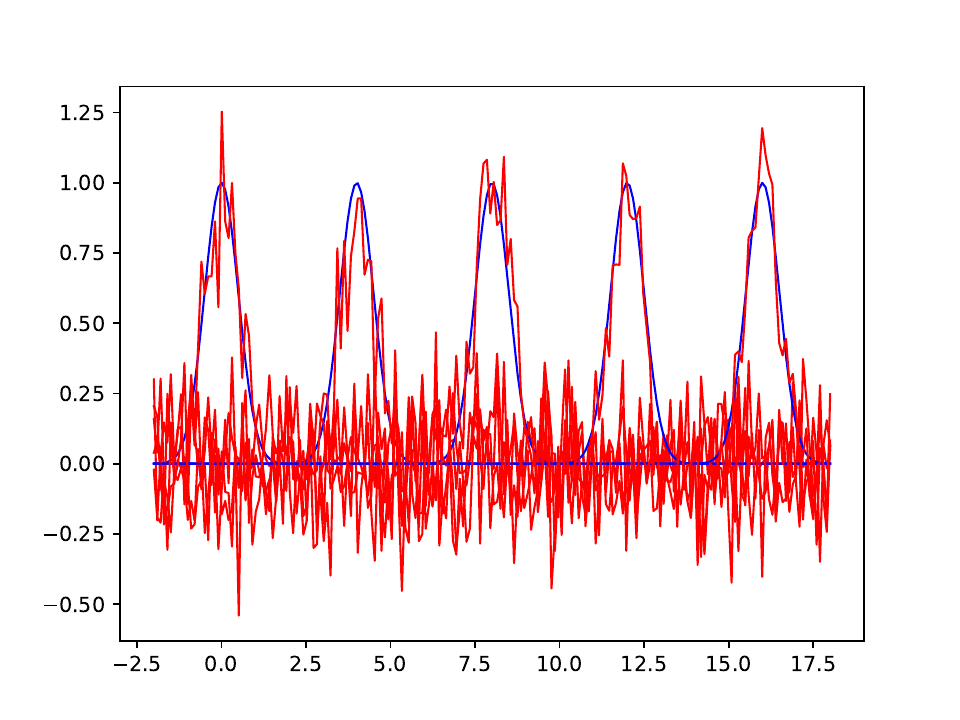}
\includegraphics[width=0.32\linewidth]{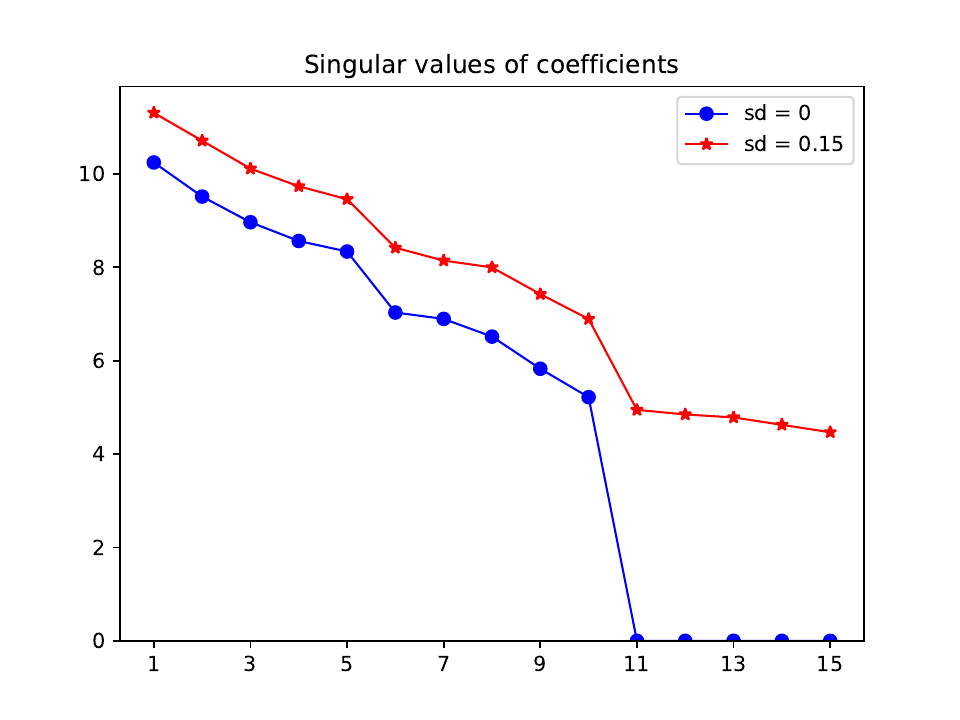}
\includegraphics[width=0.32\linewidth]{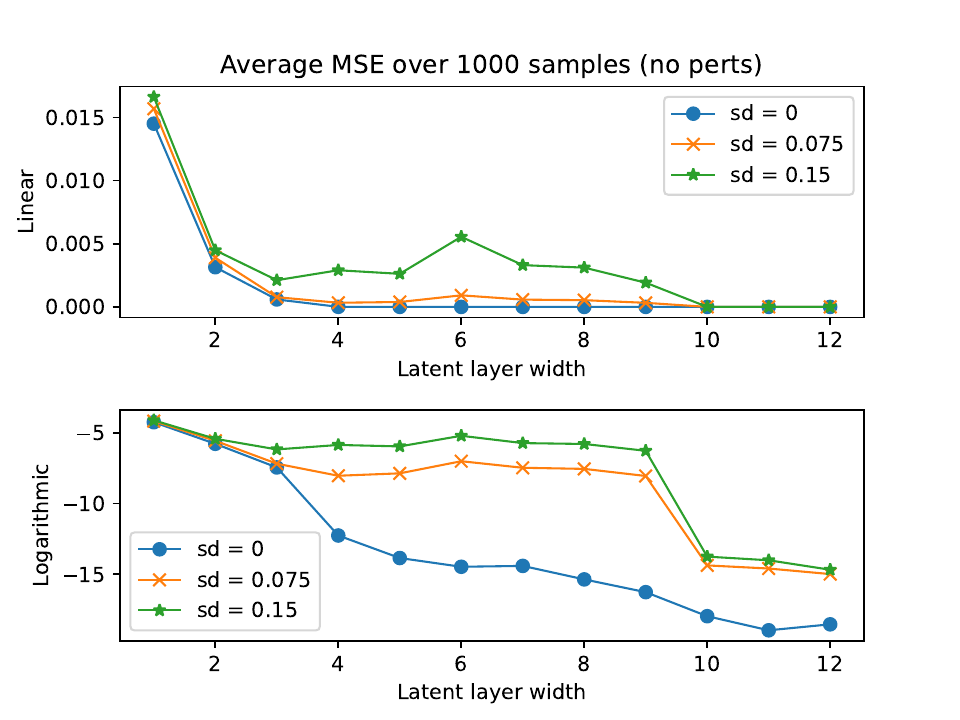}
\subcaption{$n_X = 2$, $L = 5$}
\label{fig_nonlindata_gauss_ldim2_gb5}
\end{subfigure}
\begin{subfigure}[t]{\linewidth}
\centering
\includegraphics[width=0.32\linewidth]{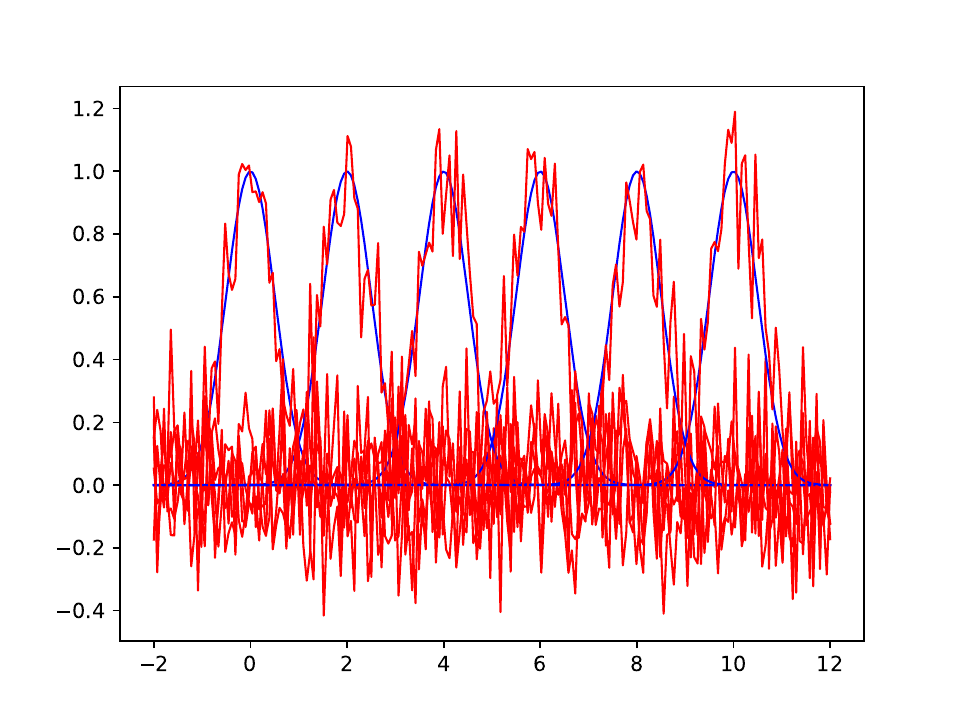}
\includegraphics[width=0.32\linewidth]{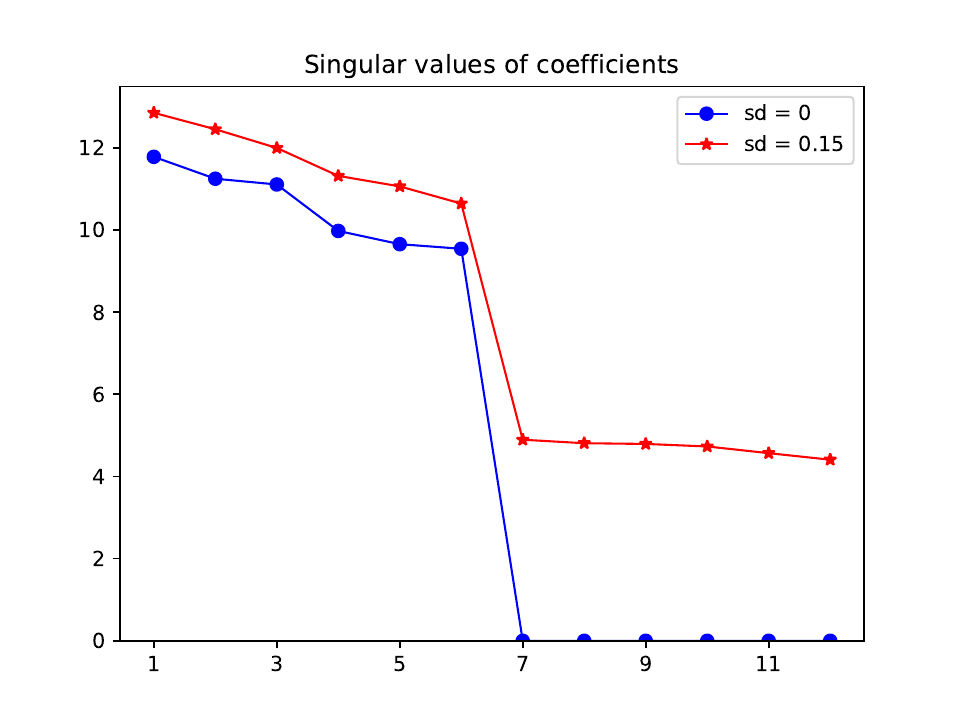}
\includegraphics[width=0.32\linewidth]{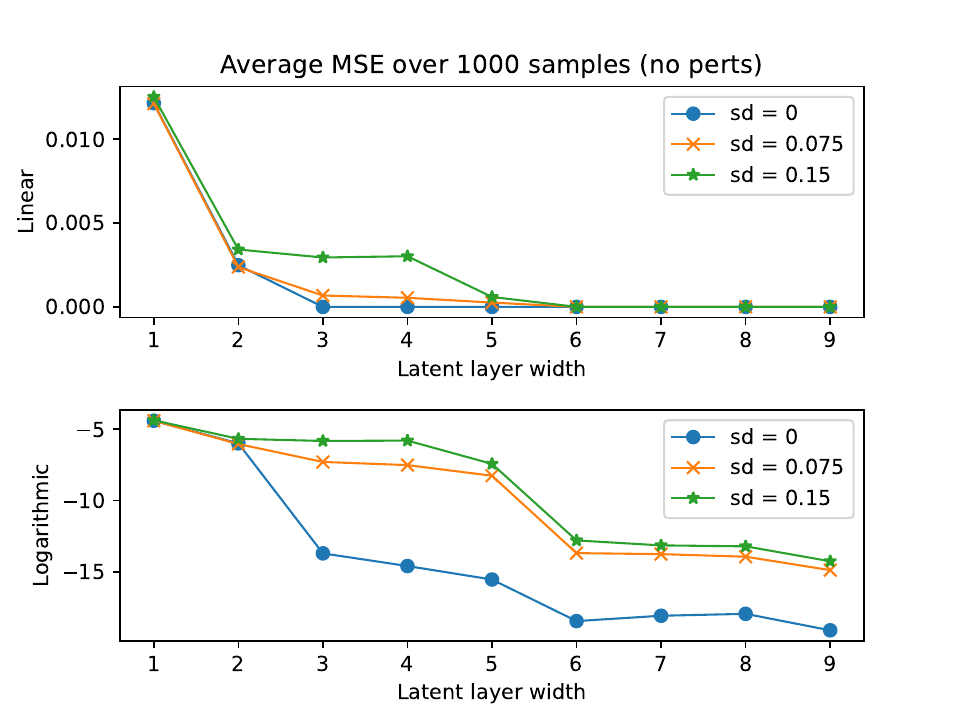}
\subcaption{$n_X = 3$, $L = 6$}
\label{fig_nonlindata_gauss_ldim3_gb6}
\end{subfigure}
\begin{subfigure}[t]{\linewidth}
\centering
\includegraphics[width=0.32\linewidth]{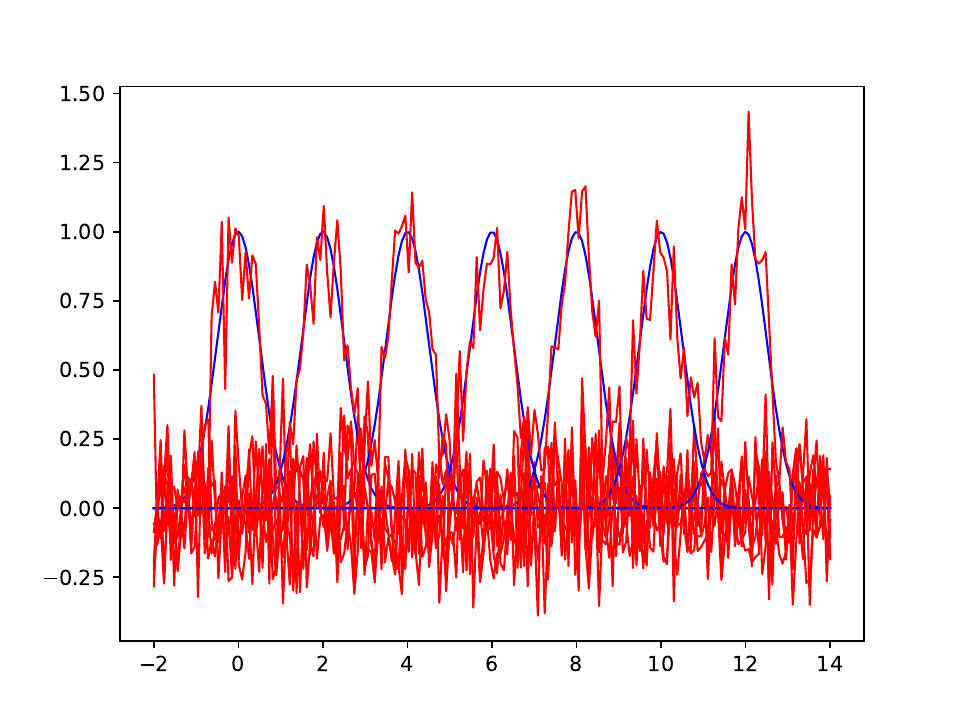}
\includegraphics[width=0.32\linewidth]{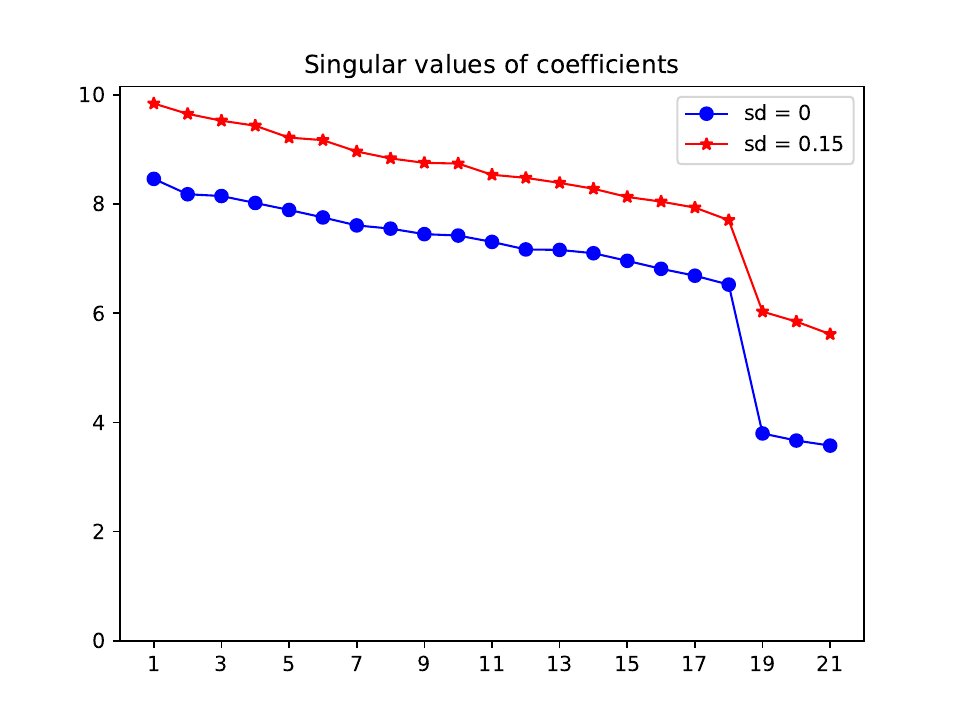}
\includegraphics[width=0.32\linewidth]{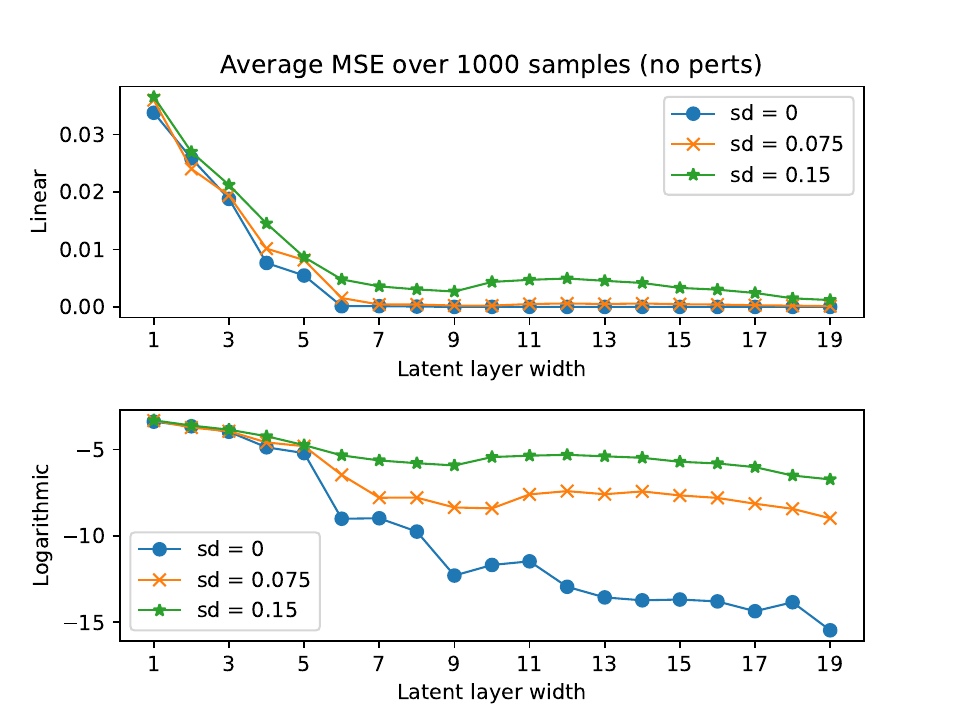}
\subcaption{$n_X = 3$, $L = 7$}
\label{fig_nonlindata_gauss_ldim3_gb7}
\end{subfigure}
\begin{subfigure}[t]{\linewidth}
\centering
\includegraphics[width=0.32\linewidth]{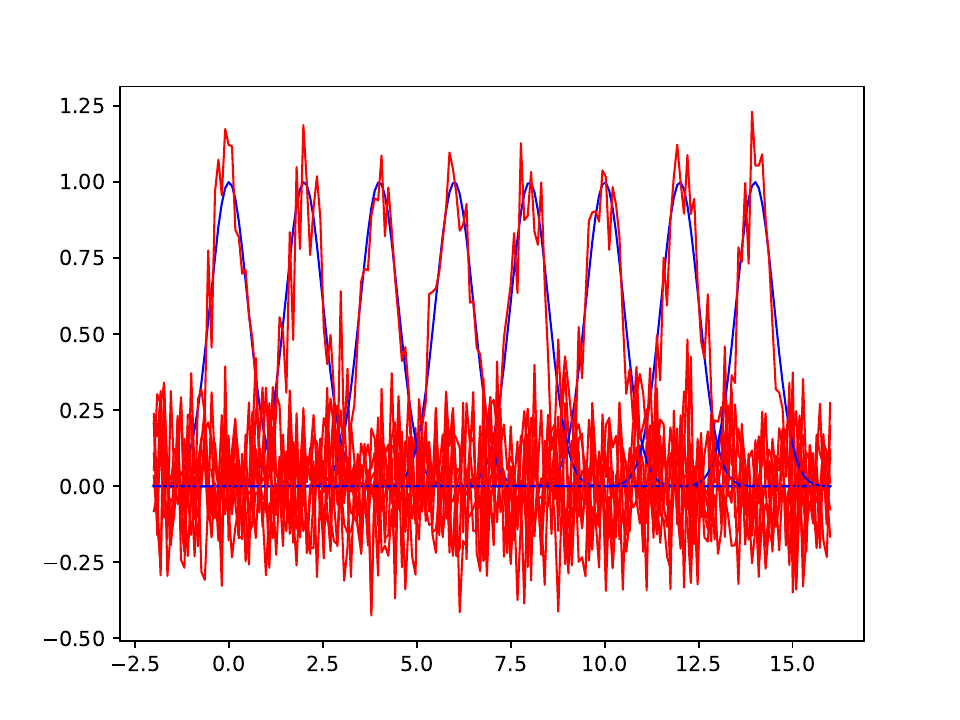}
\includegraphics[width=0.32\linewidth]{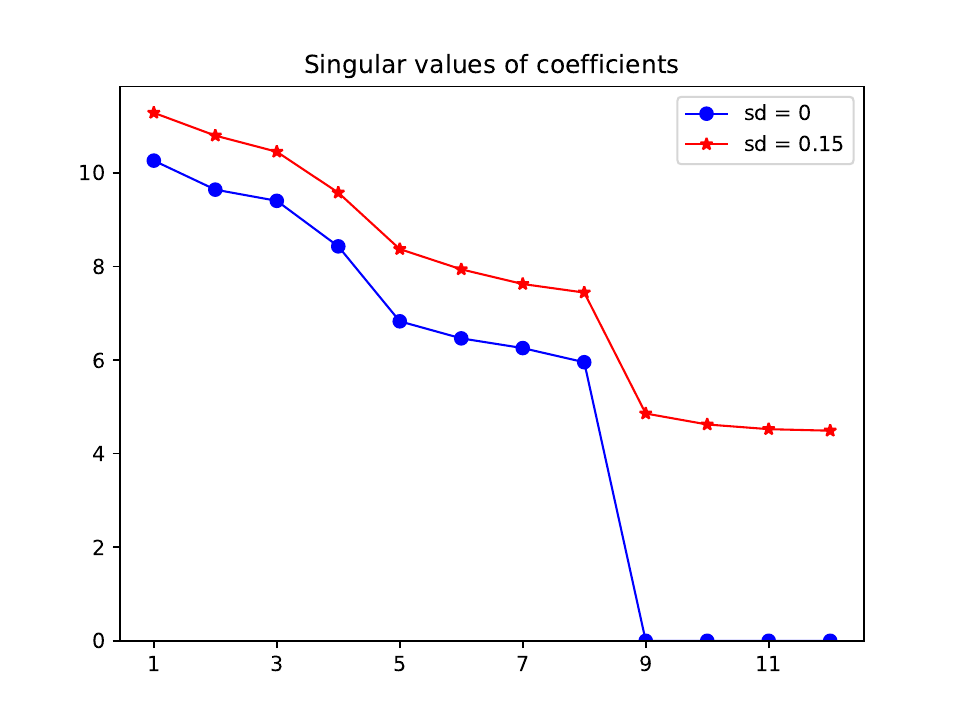}
\includegraphics[width=0.32\linewidth]{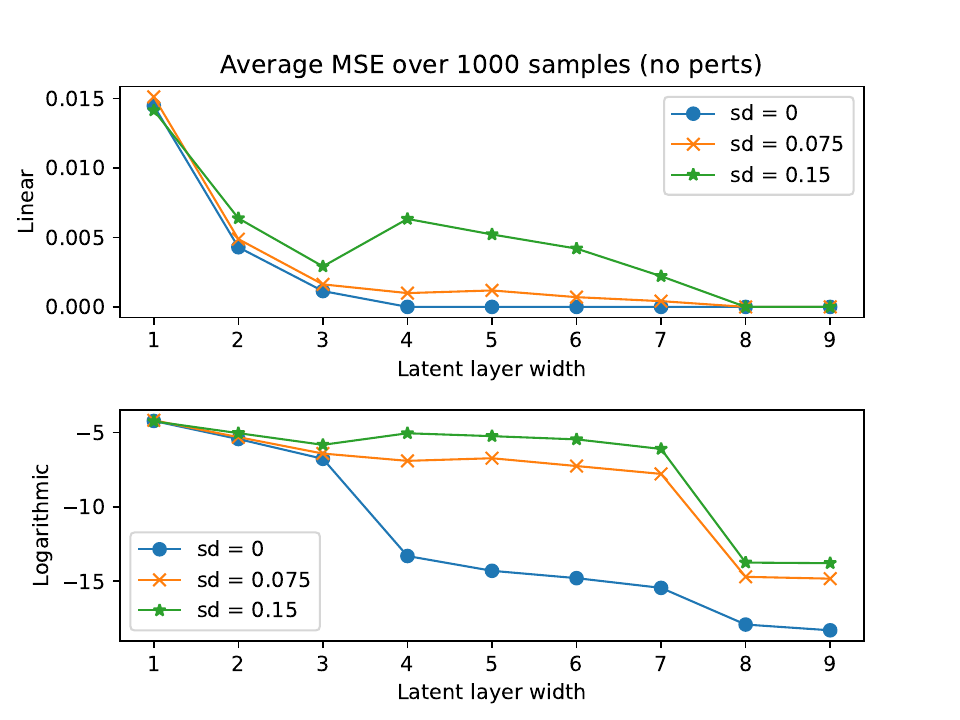}
\subcaption{$n_X = 4$, $L = 8$}
\label{fig_nonlindata_gauss_ldim4_gb8}
\end{subfigure}

\caption{Gaussian data examples. \textbf{Left:} Bell curves used for the coefficients, unperturbed (blue) and perturbed (red). \textbf{Middle:} PCA results for unperturbed data (blue) and perturbed (red, standard deviation = 0.15). The plots show the singular values of the coefficients in decreasing order. \textbf{Right:} Autoencoder results for three different perturbations used during training: unperturbed and two perturbed (standard deviation = 0.075 and 0.15). The plots show the average mean squared reconstruction error over unperturbed test data for different latent layer widths both on a linear and logarithmic scale.}
\label{fig:gaussian-data-example}
\end{figure}

\paragraph{Combining Operator Network with Decoder}
In the third Gaussian data example with results presented in Figure~\ref{fig_nonlindata_gauss_ldim3_gb7}, we have $(n_X, L) = (3, 7)$. Here the PCA suggests that the underlying dimension is 21 (number of significant singular values), whereas the corresponding autoencoder study suggests that a reduction down to 9 dimensions could provide the same improvement as a reduction down to 17 in the case of the autoencoders trained on perturbed data (9 and 17 give roughly the same error). In light of the above, we may take an autoencoder with latent layer width = 9 from this case and connect its decoder to the input of the operator network for the 244x244 mesh with 21 input coefficients. We may thus solve an inverse minimization problem over a 9-dimensional latent space instead of a 21-dimensional coefficient space. We present demonstrations of this process in Figures~\ref{fig_nonlinnonlin_optprocess_unpertdec_unpertomega} -- \ref{fig_nonlinnonlin_optprocess}. A summary of the optimization results for these three demonstrations and also the one in Figure~\ref{fig_nonlinlin_optprocess} is given in Table~\ref{table:nonlinear_opt_res}.

The main difference between the three demonstrations is the decoder used. First in Figures~\ref{fig_nonlinnonlin_optprocess_unpertdec_unpertomega} -- \ref{fig_nonlinnonlin_optprocess_unpertdec}, we use the decoder from the ``sd = 0'' autoencoder, meaning it was trained on unperturbed data. The first of these two demonstrations is for clean data, $u_0$ in $\omega$, and the second for noisy. We see that the two optimization processes are essentially the same but find it instructive to present both as the clean data case functions as a reference. Second, in Figure~\ref{fig_nonlinnonlin_optprocess}, we use the decoder from the ``sd = 0.15'' autoencoder, meaning it was trained on perturbed data with perturbations from $\mcN(0, 0.0225)$. From the logarithmic scale in the right frame in Figure~\ref{fig_nonlindata_gauss_ldim3_gb7} we see that the reconstruction errors of the two autoencoders differ substantially, by several orders of magnitude. Comparing the corresponding optimization processes, we also see that using the ``sd = 0'' decoder (Figure~\ref{fig_nonlinnonlin_optprocess_unpertdec}) produces a much more accurate reconstruction compared to the ``sd = 0.15'' decoder (Figure~\ref{fig_nonlinnonlin_optprocess}) that fails to do so.

The reconstructions in all three decoder cases, and especially the last, are less accurate compared to the case with only the operator network presented in Figure~\ref{fig_nonlinlin_optprocess}, as can be seen from both the figures and the MSE's in Table~\ref{table:nonlinear_opt_res}. This is reasonable since the reference solution in all four cases is the same network output corresponding to a specific \emph{coefficient} input and in the case with only the operator network we optimize in this coefficient space whereas in the decoder cases in some latent space. It is simply not guaranteed that the decoders may attain this specific coefficient input when mapping from the latent space. One reason being that a single change in any of the 9 latent variables can affect all the 21 coefficients. Comparing the MSE's on the different subdomains in Table~\ref{table:nonlinear_opt_res}, we see that in all four cases it is smaller on the convex hull of $\omega$ than on the complement as expected. This is also true for the fully linear case (corresponding results are presented in the caption of Figure~\ref{fig_linlin_optprocess}). The average iteration times presented in Table~\ref{table:nonlinear_opt_res} are essentially the same for the four cases. Something that is positive for using decoders, but maybe not so surprising considering how much smaller the decoder MLP's are in comparison to the operator MLP. In summary, autoencoders may be used to reduce the dimension of the optimization space (latent instead of coefficient space), but to really gain from such a reduction and to maintain accuracy, care needs to be taken in how the reduction mapping is constructed. We point out that the MLP approach considered here is rather simple and that we believe there is substantial room for improvement by considering more sophisticated methods.

As final remarks we point out that taking some output of the method under consideration as the reference solution, as is done here, is typically not a proper choice since it is too idealized. However, here we make this choice to put more focus on the effects of latent space optimization. We also point out that all the optimization processes involving neural networks presented here have been for the rougher networks: the operator network in the last row of Table~\ref{tab_ONtraininginfo_coeffs21} and the autoencoders in Figure~\ref{fig_nonlindata_gauss_ldim3_gb7} have alternatives with better measures of well-trainedness. The idea behind this being that if the concept works to some degree in the harder cases, it should work even better in the easier ones.

\begin{figure}
\centering
\includegraphics[width=0.3\linewidth]{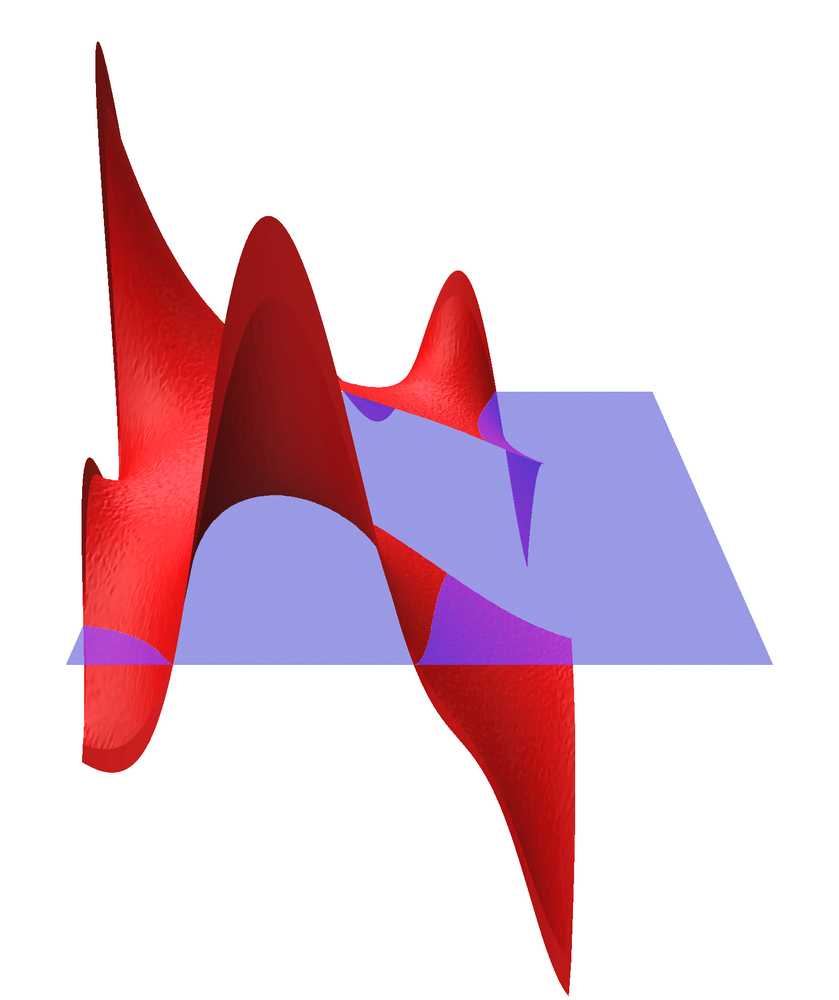}
\includegraphics[width=0.3\linewidth]{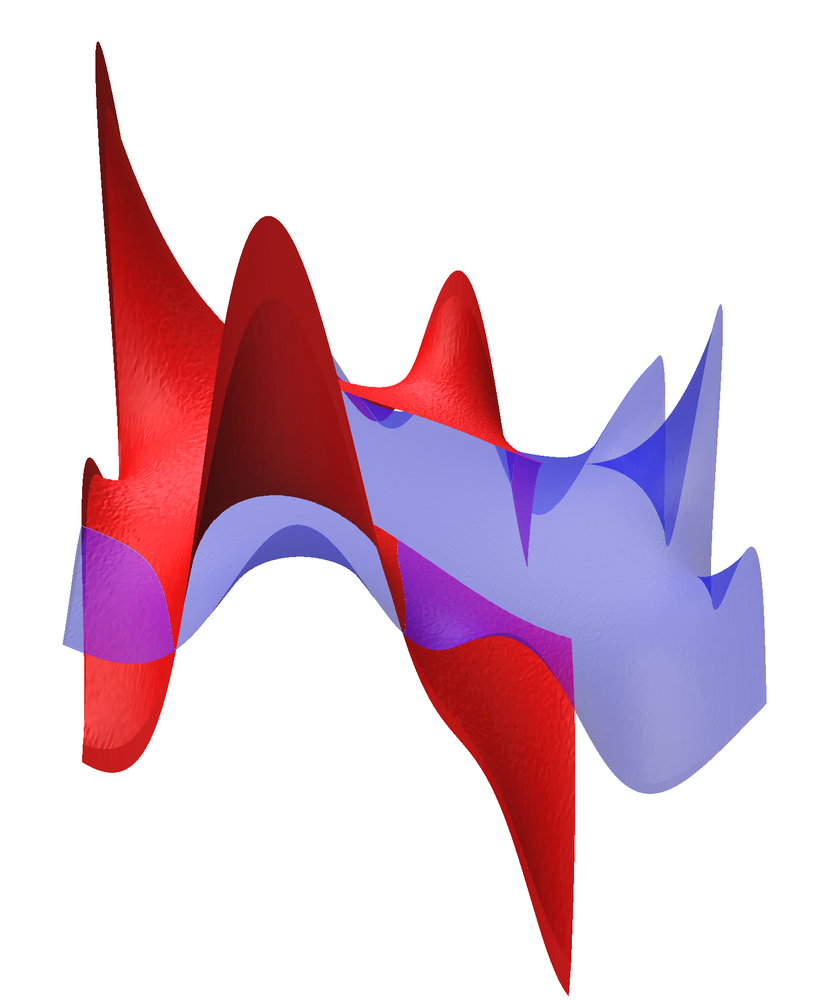}
\includegraphics[width=0.3\linewidth]{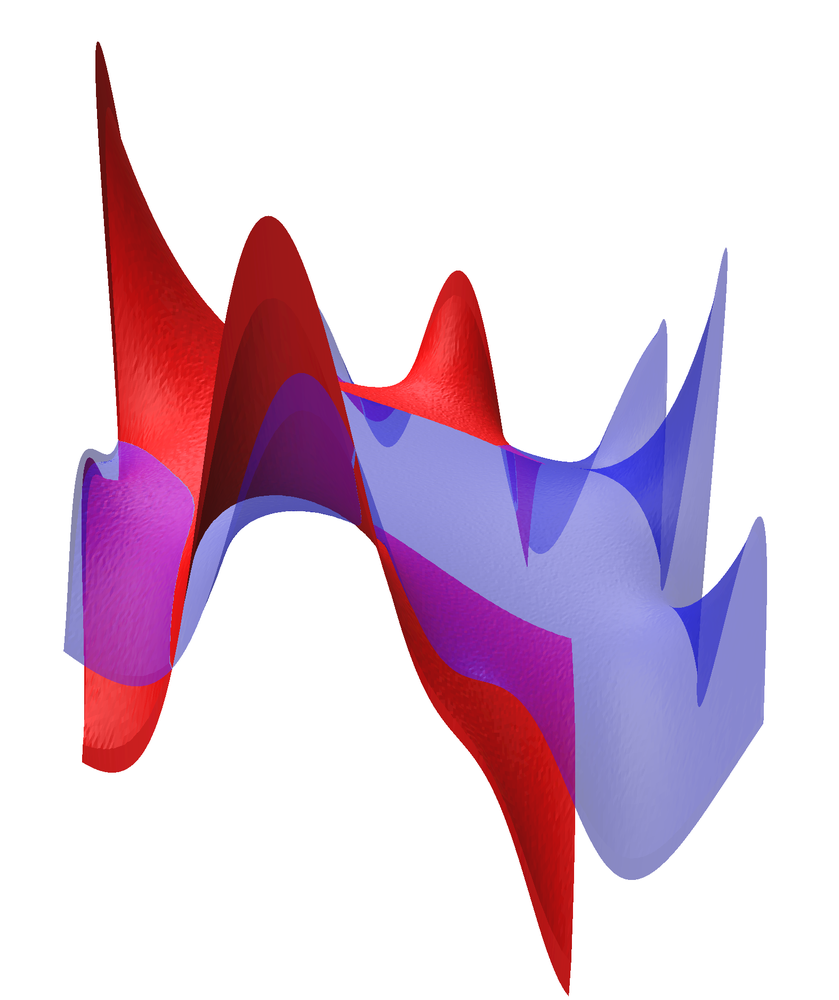}
\includegraphics[width=0.3\linewidth]{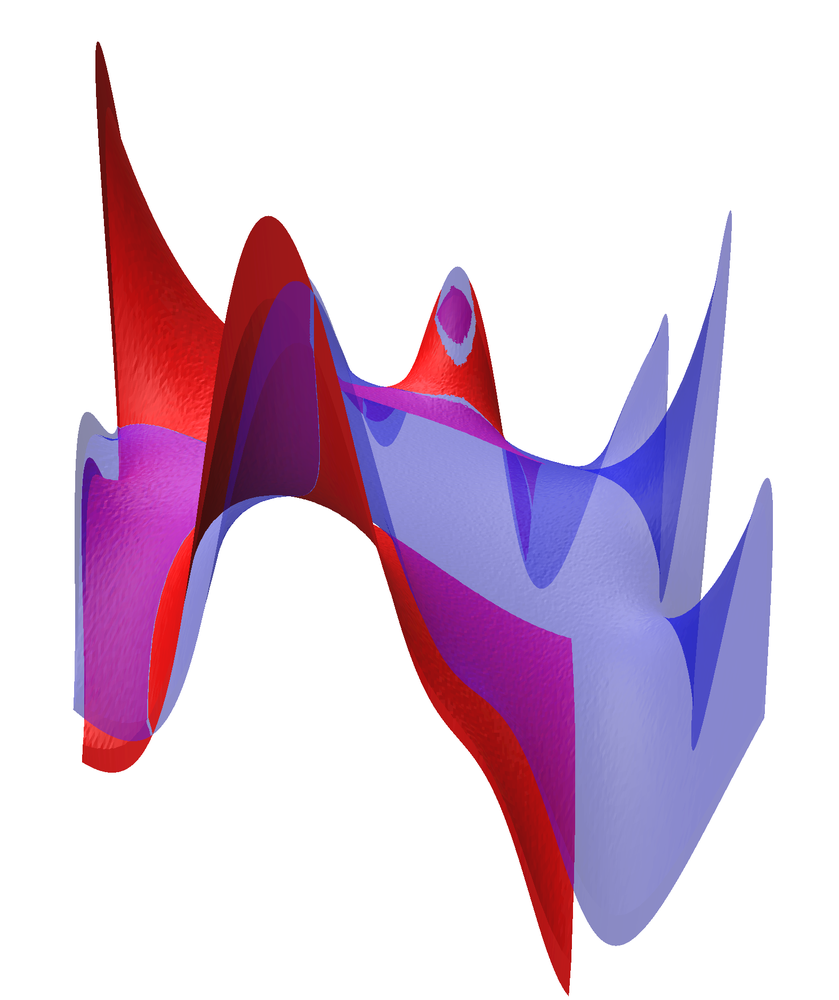}
\includegraphics[width=0.3\linewidth]{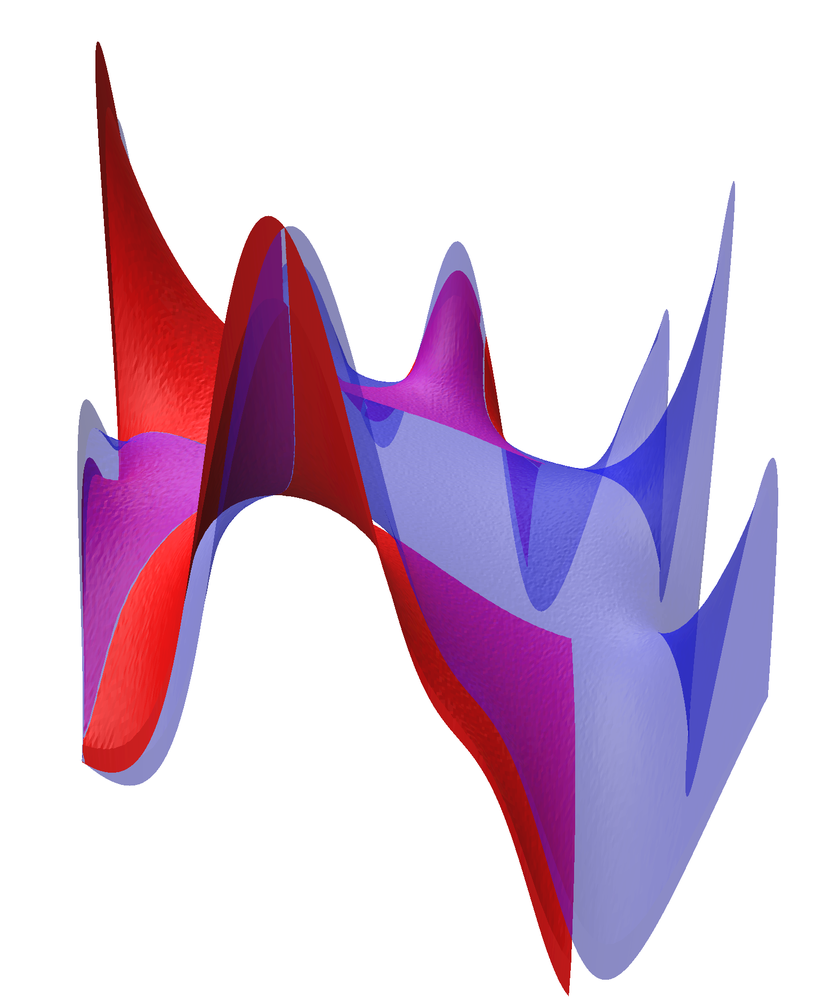}
\includegraphics[width=0.3\linewidth]{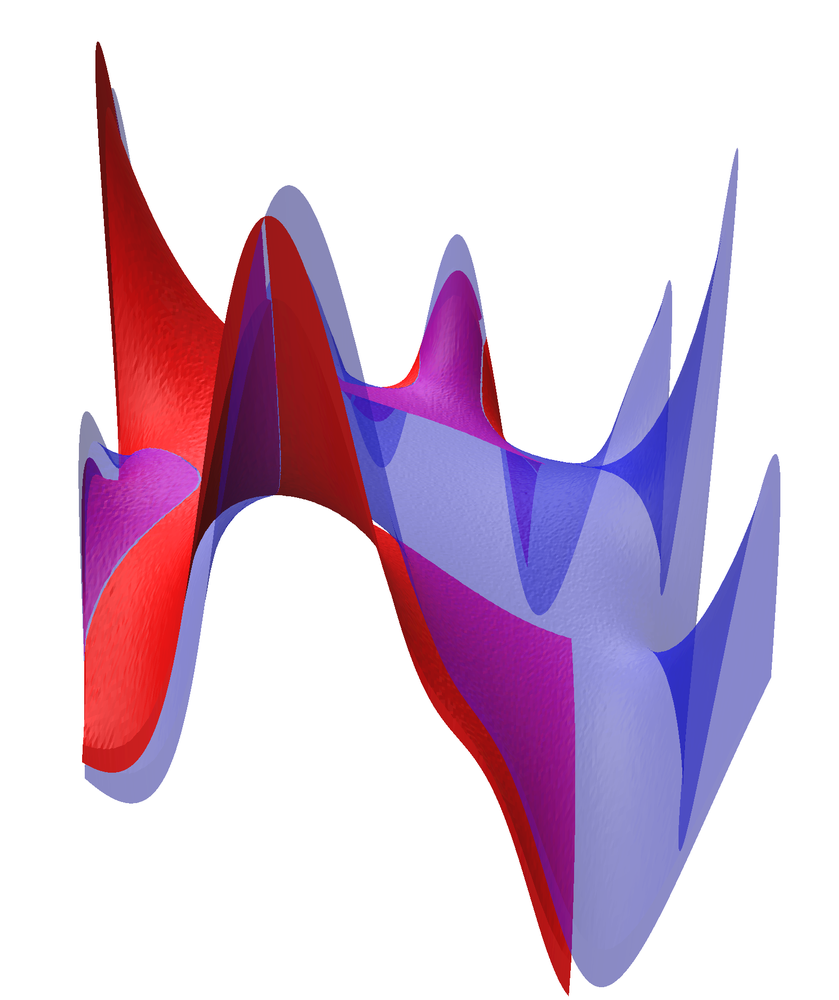}
\includegraphics[width=0.3\linewidth]{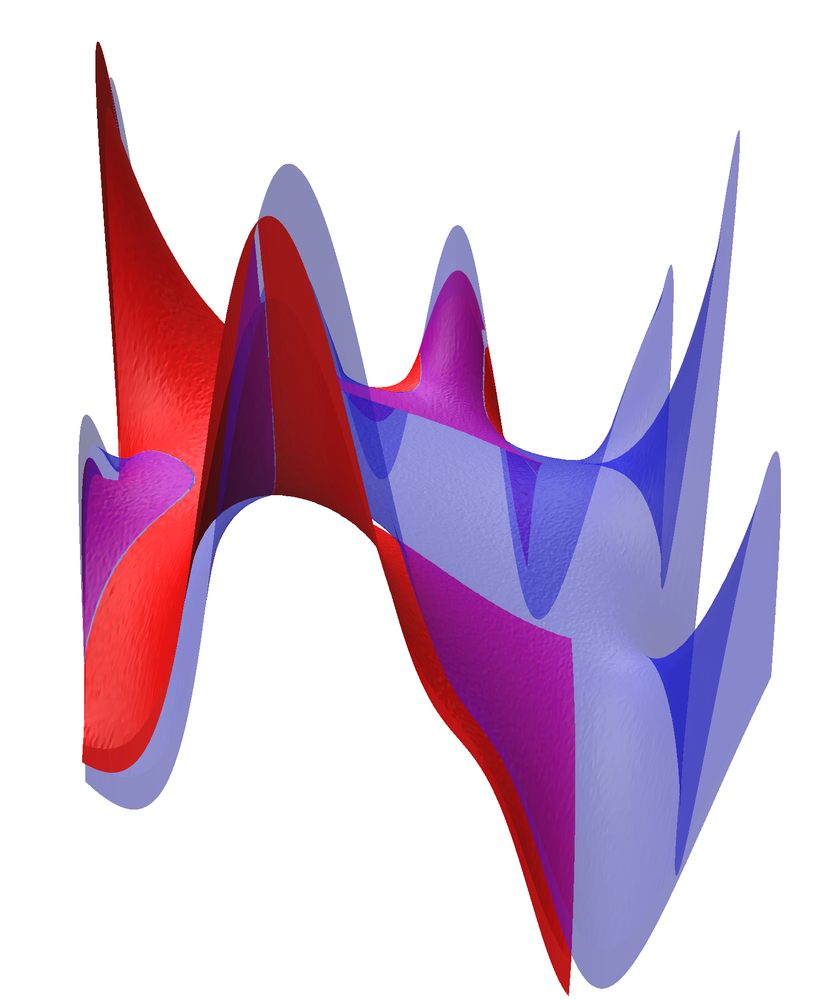}
\includegraphics[width=0.3\linewidth]{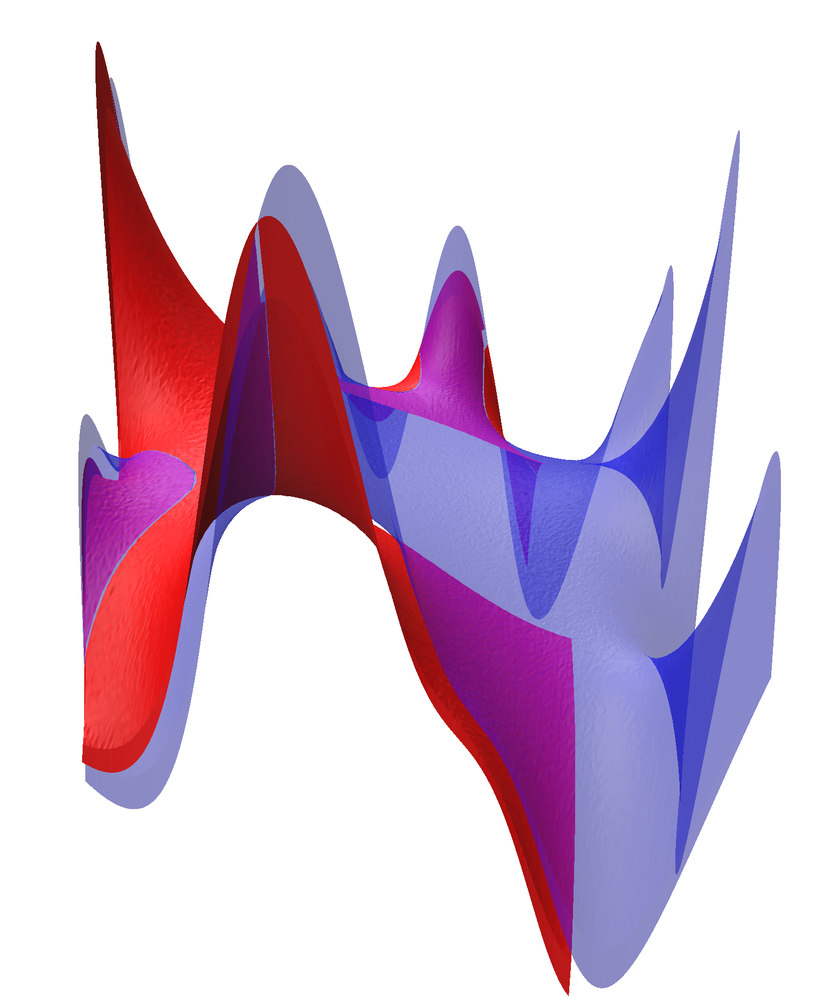}
\includegraphics[width=0.3\linewidth]{figures/invprobs_referencesolutions/nonlin_21coeffs_coeff13val10}
\caption{Optimization process over a 9-dimensional \emph{latent space} for a \emph{nonlinear} inverse problem with \emph{clean} data. Again, the operator network in the last row of Table~\ref{tab_ONtraininginfo_coeffs21} (21 input coefficients, 59049 output DoFs) is used, but here together with the ``sd = 0'' decoder from the right frame in Figure~\ref{fig_nonlindata_gauss_ldim3_gb7}. The decoder maps from a 9-dimensional latent space to a 21-dimensional coefficient space. The last frame shows $\omega$ and the reference solution used for the data which was obtained by taking $p_{14} = 10$ and all other $p_n$'s = 0. The penultimate frame shows the optimization's MSE-converged reconstruction of the reference solution. The MSE converged after 1481 iterations with the Adam optimizer with a step size = 0.1. This took 68.9 s on an Apple M1 CPU. The MSE's between the reference solution and the converged reconstruction are: on $\omega$ (used in optimization), $\text{MSE}_{\omega}$ = 2.22e-3; on the convex hull of $\omega$, $\text{MSE}_{\text{co}(\omega)}$ = 1.52e-3; and on its complement, $\text{MSE}_{\text{co}(\omega)^c}$ = 4.83e-3.}
\label{fig_nonlinnonlin_optprocess_unpertdec_unpertomega}
\end{figure}
\begin{figure}
\centering
\includegraphics[width=0.3\linewidth]{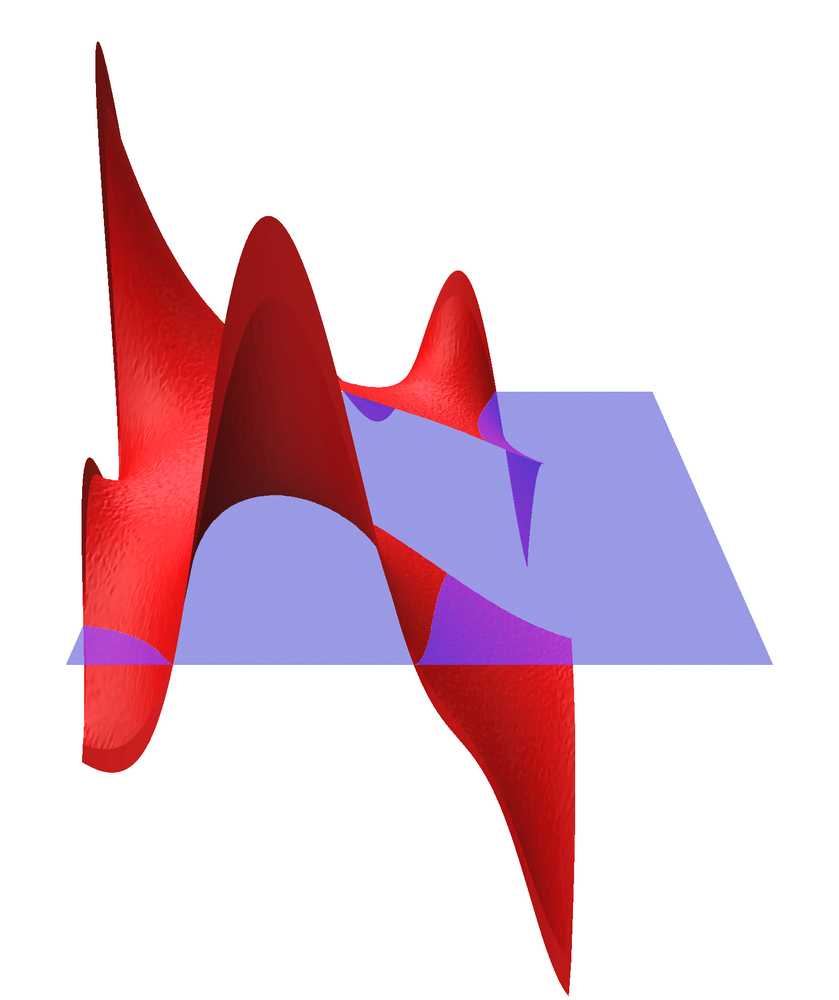}
\includegraphics[width=0.3\linewidth]{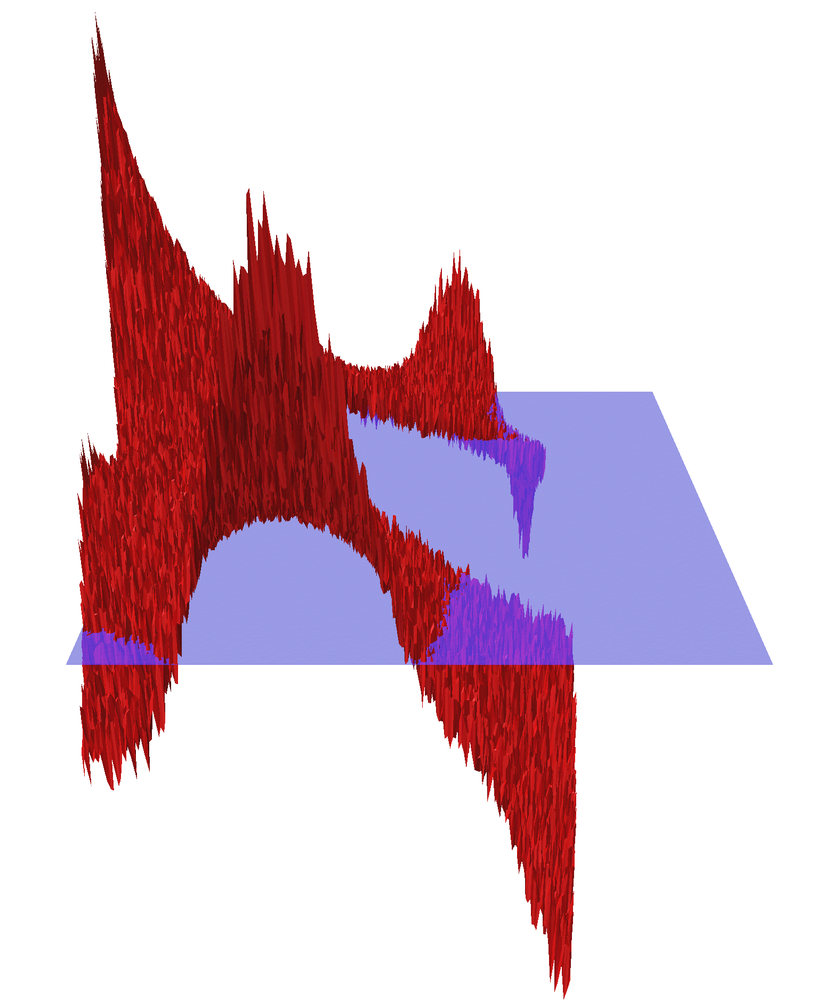}
\includegraphics[width=0.3\linewidth]{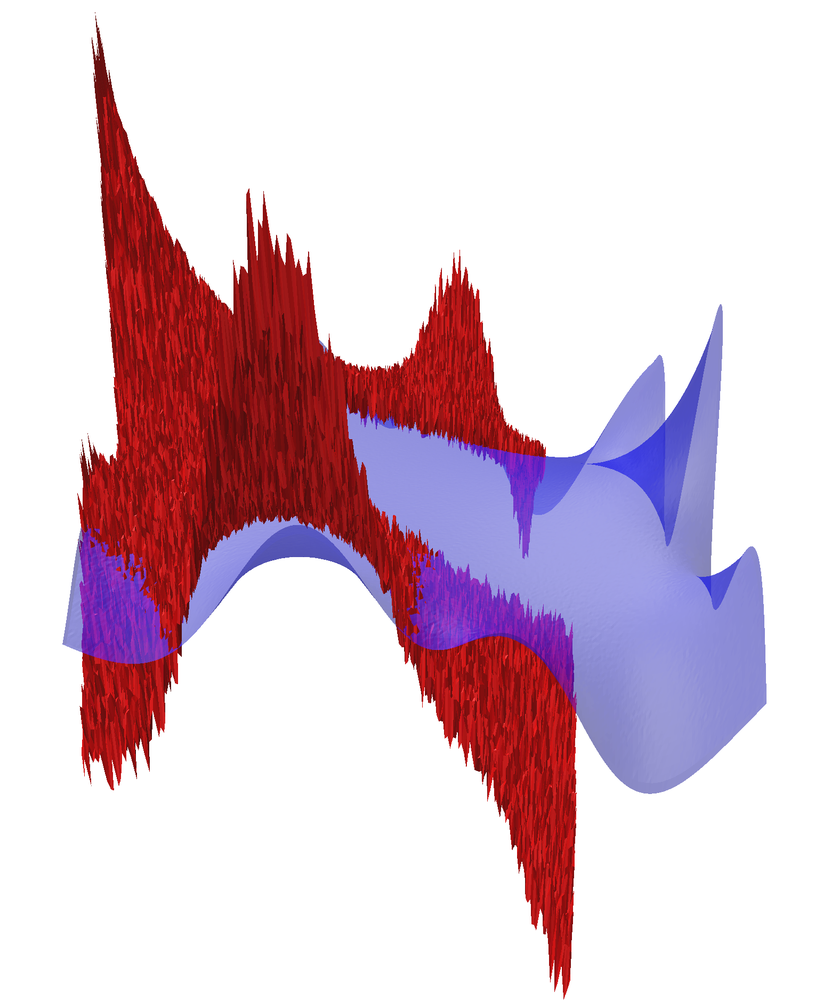}
\includegraphics[width=0.3\linewidth]{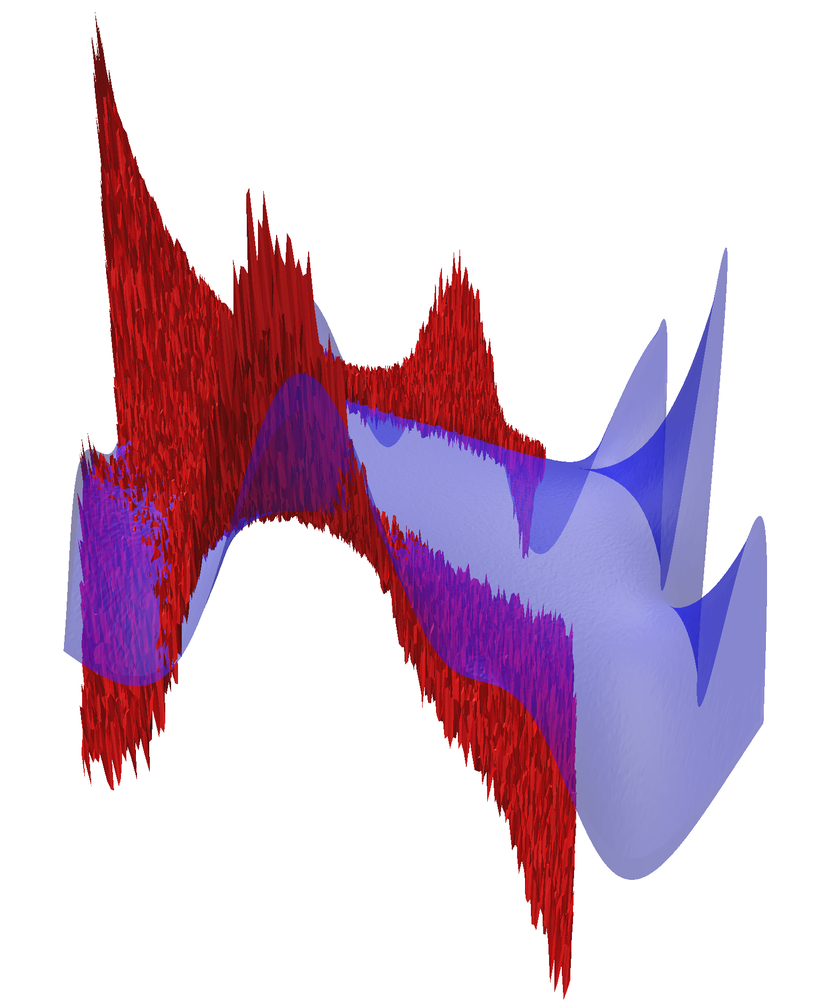}
\includegraphics[width=0.3\linewidth]{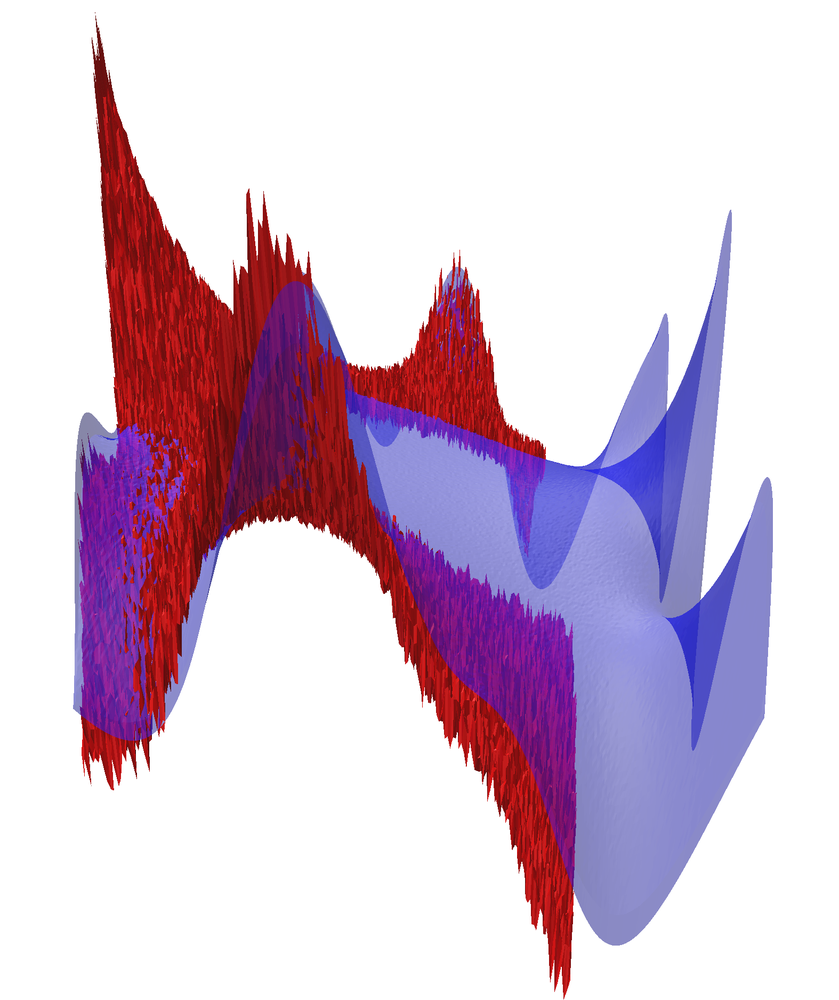}
\includegraphics[width=0.3\linewidth]{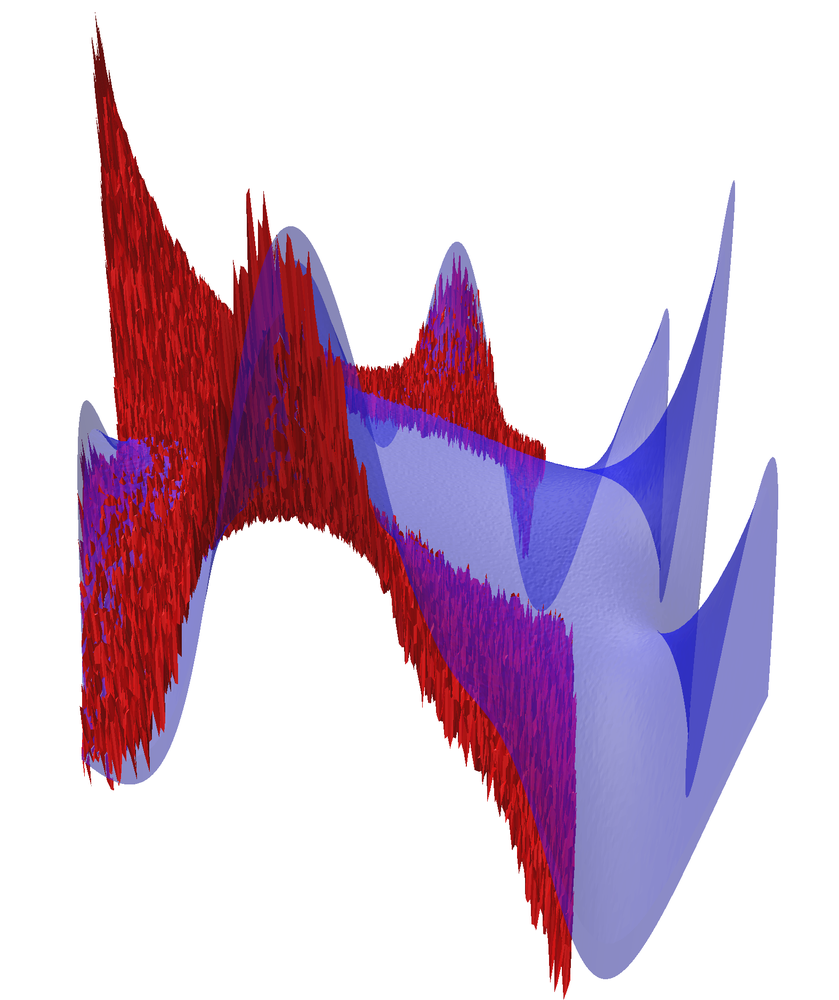}
\includegraphics[width=0.3\linewidth]{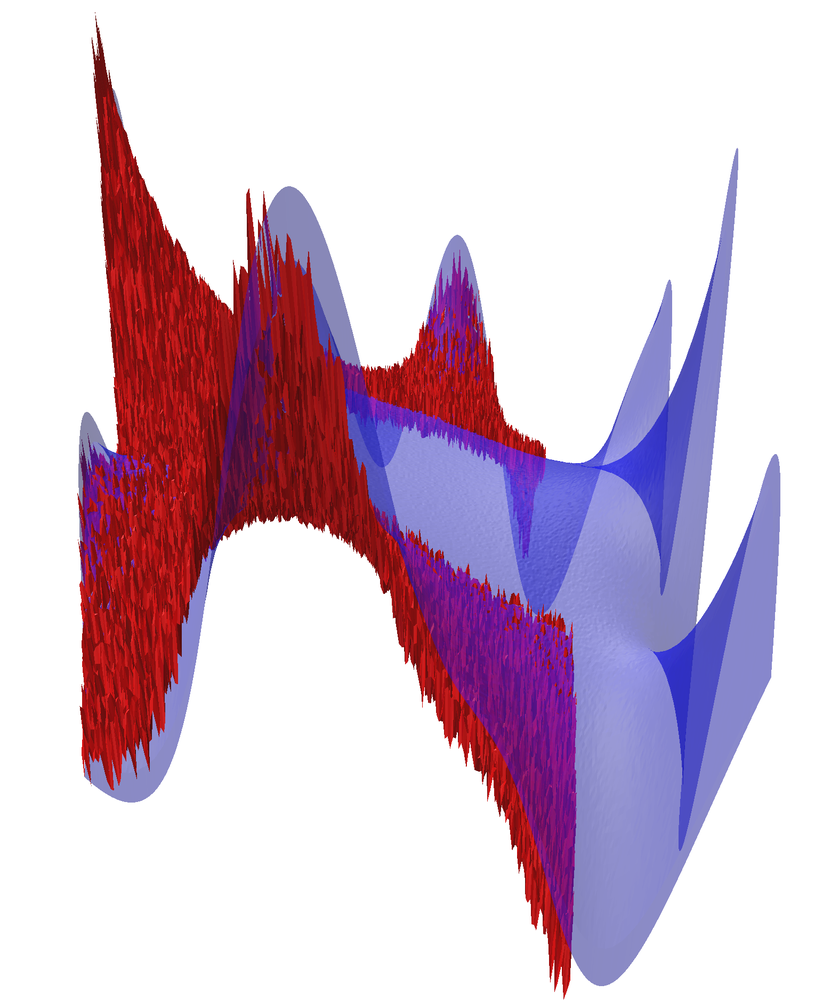}
\includegraphics[width=0.3\linewidth]{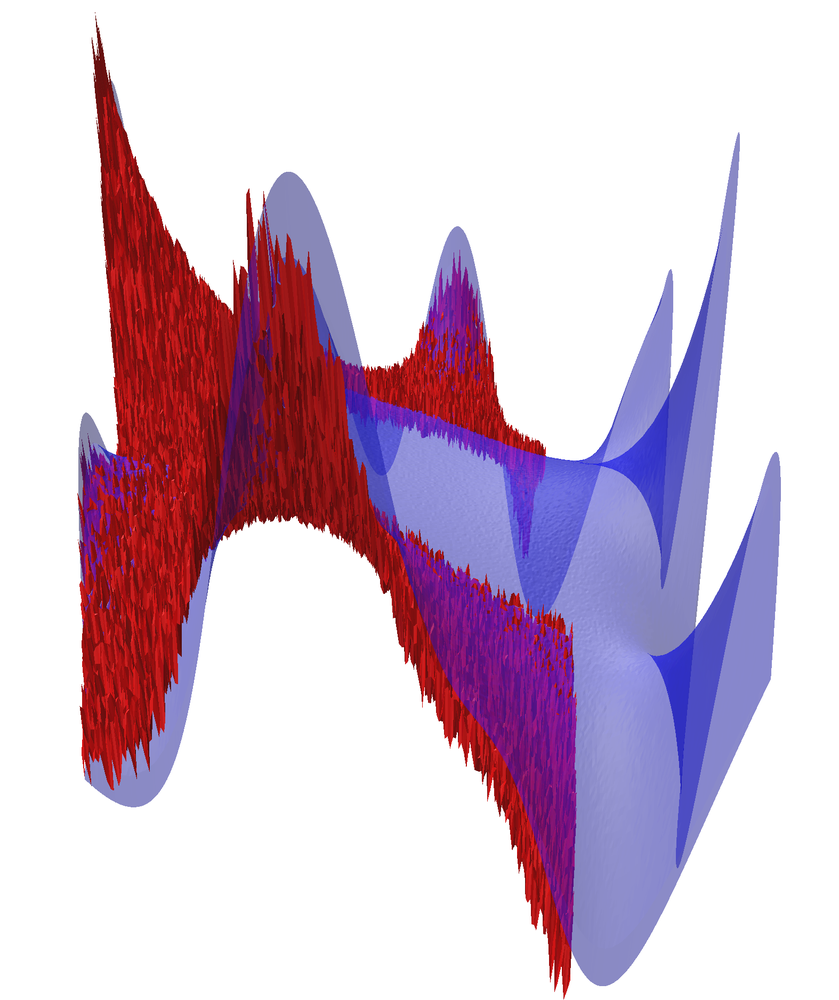}
\includegraphics[width=0.3\linewidth]{figures/invprobs_referencesolutions/nonlin_21coeffs_coeff13val10}
\caption{Optimization process over a 9-dimensional \emph{latent space} for a \emph{nonlinear} inverse problem with noisy data. Again, the operator network in the last row of Table~\ref{tab_ONtraininginfo_coeffs21} (21 input coefficients, 59049 output DoFs) is used together with the ``sd = 0'' decoder from the right frame in Figure~\ref{fig_nonlindata_gauss_ldim3_gb7}. The decoder maps from a 9-dimensional latent space to a 21-dimensional coefficient space. The unperturbed data is shown in the first frame. The second frame is the same as the first but with added noise sampled from $\mcU(-0.05, 0.05)$. The last frame shows $\omega$ and the reference solution used for the data which was obtained by taking $p_{14} = 10$ and all other $p_n$'s = 0. The penultimate frame shows the optimization's MSE-converged reconstruction of the reference solution. The MSE converged after 1018 iterations with the Adam optimizer with a step size = 0.1. This took 47.5 s on an Apple M1 CPU. The MSE's between the reference solution and the converged reconstruction are: on $\omega$ (used in optimization), $\text{MSE}_{\omega}$ = 3.07e-3; on the convex hull of $\omega$, $\text{MSE}_{\text{co}(\omega)}$ = 2.37e-3; and on its complement, $\text{MSE}_{\text{co}(\omega)^c}$ = 5.25e-3.}
\label{fig_nonlinnonlin_optprocess_unpertdec}
\end{figure}
\begin{figure}
\centering
\includegraphics[width=0.3\linewidth]{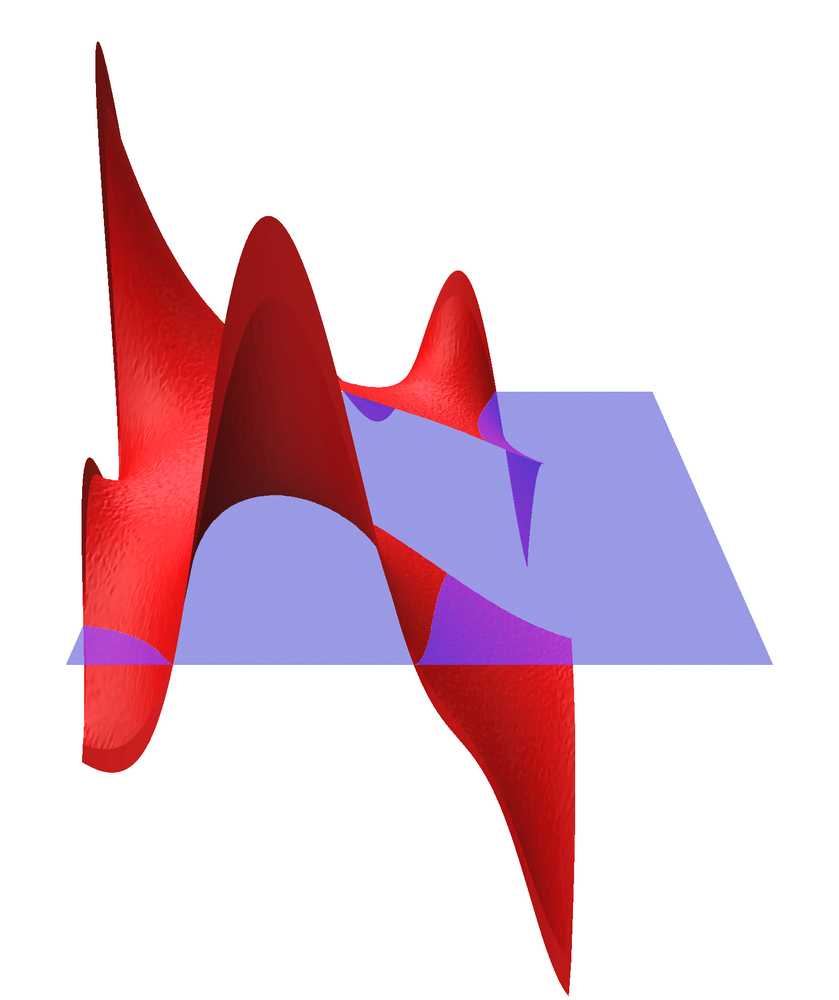}
\includegraphics[width=0.3\linewidth]{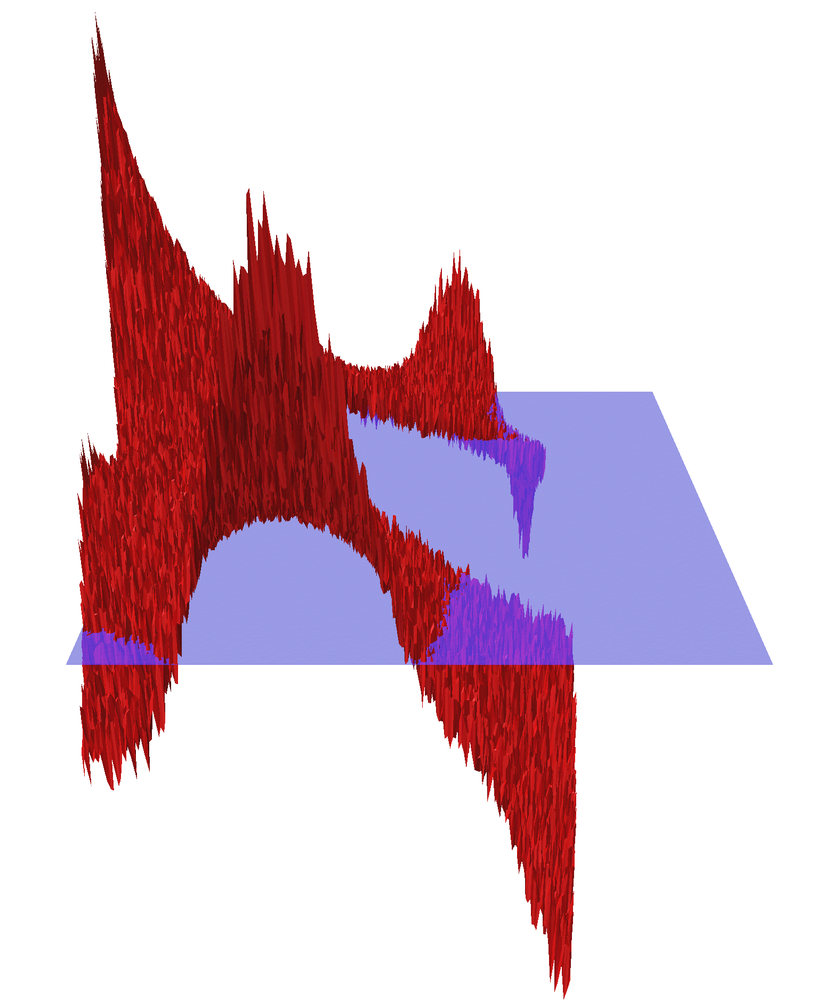}
\includegraphics[width=0.3\linewidth]{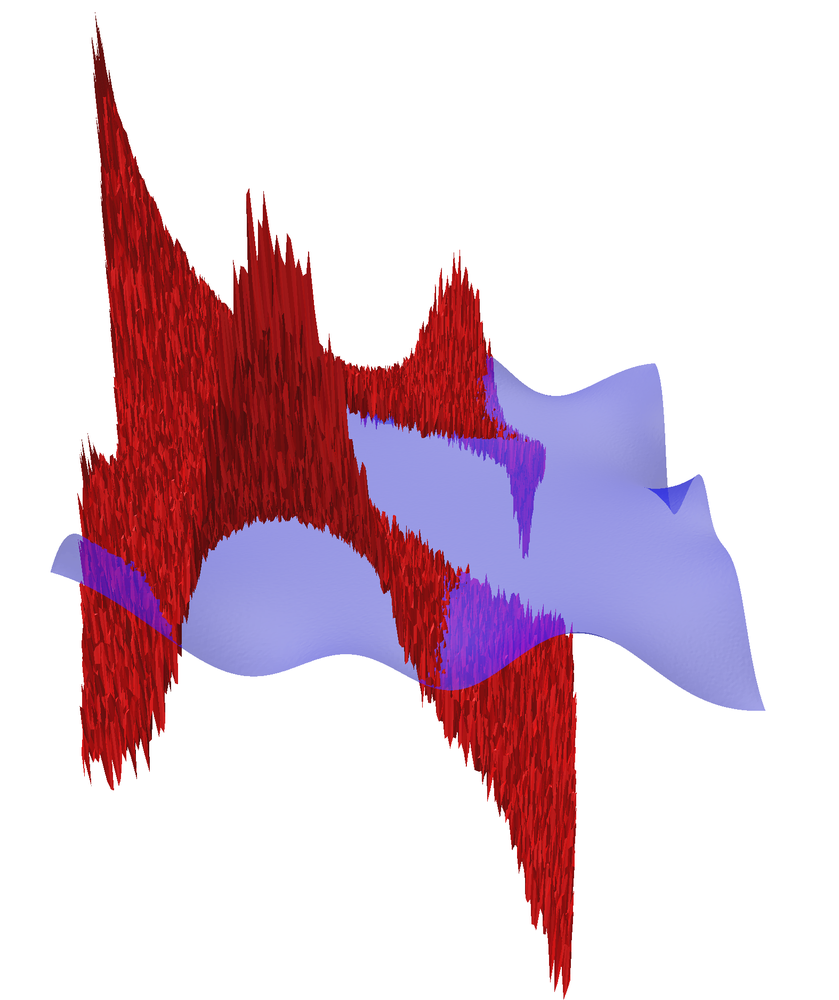}
\includegraphics[width=0.3\linewidth]{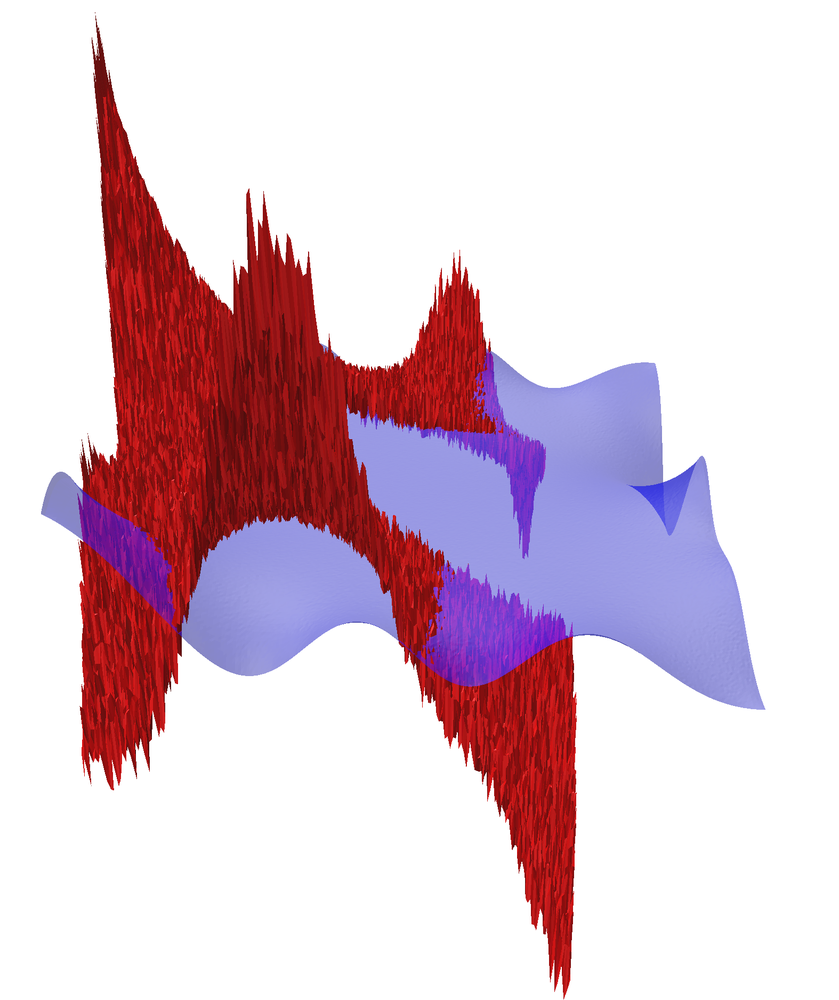}
\includegraphics[width=0.3\linewidth]{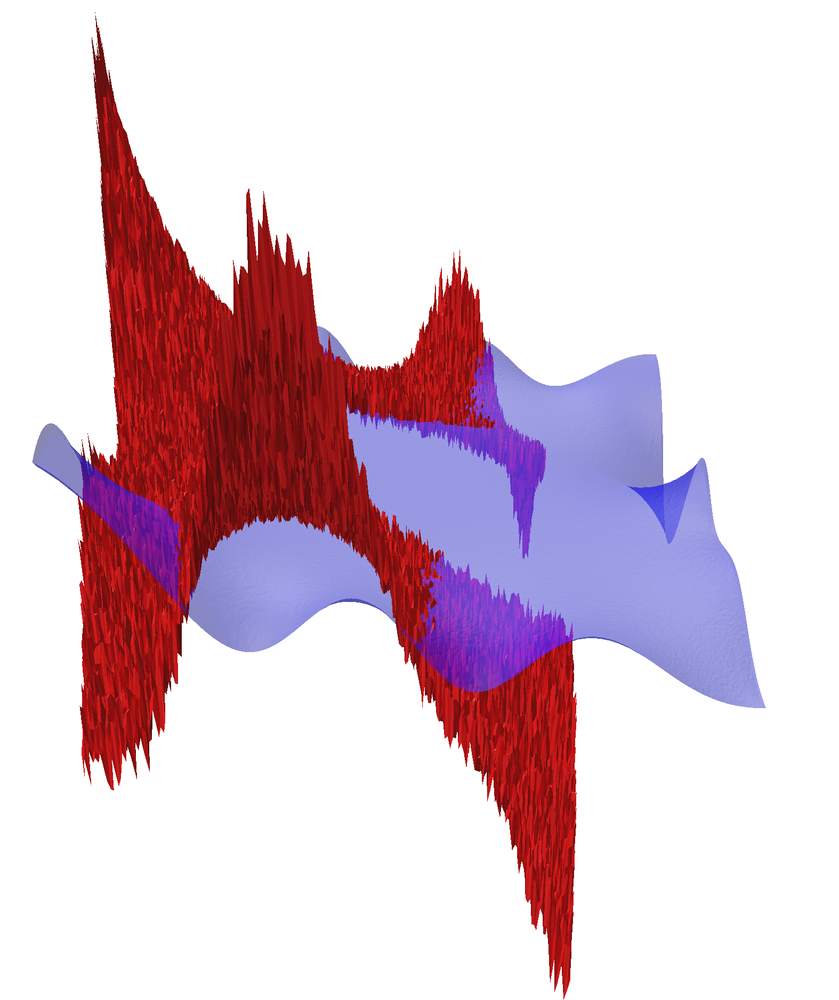}
\includegraphics[width=0.3\linewidth]{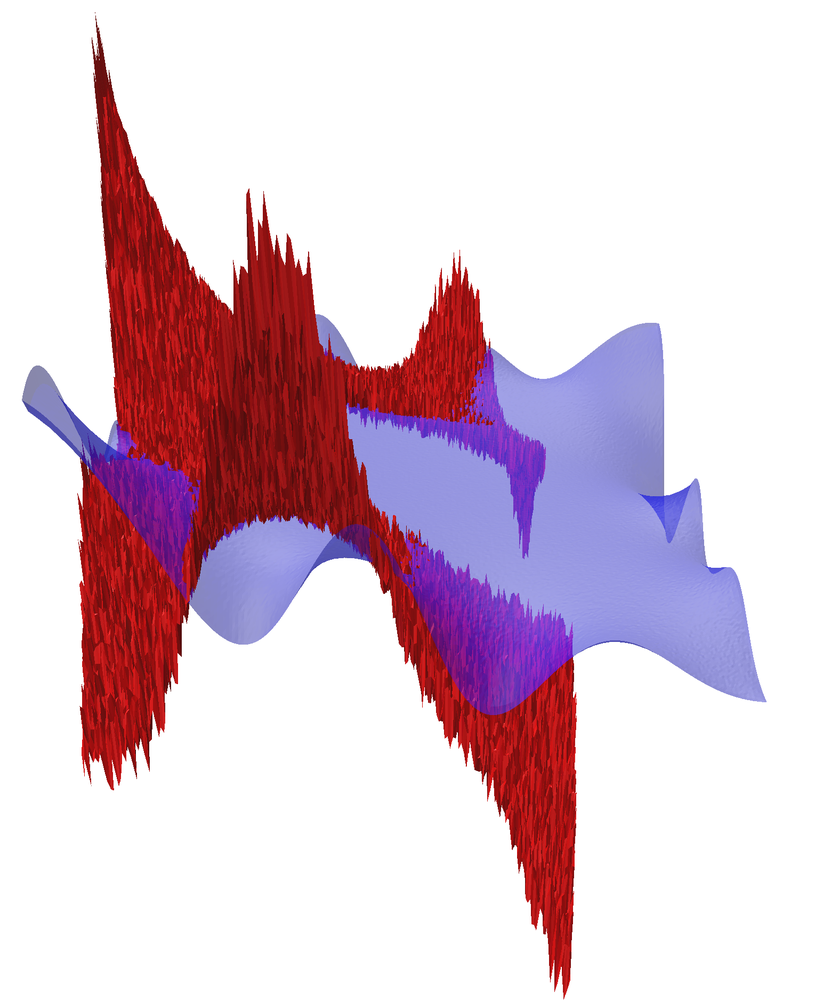}
\includegraphics[width=0.3\linewidth]{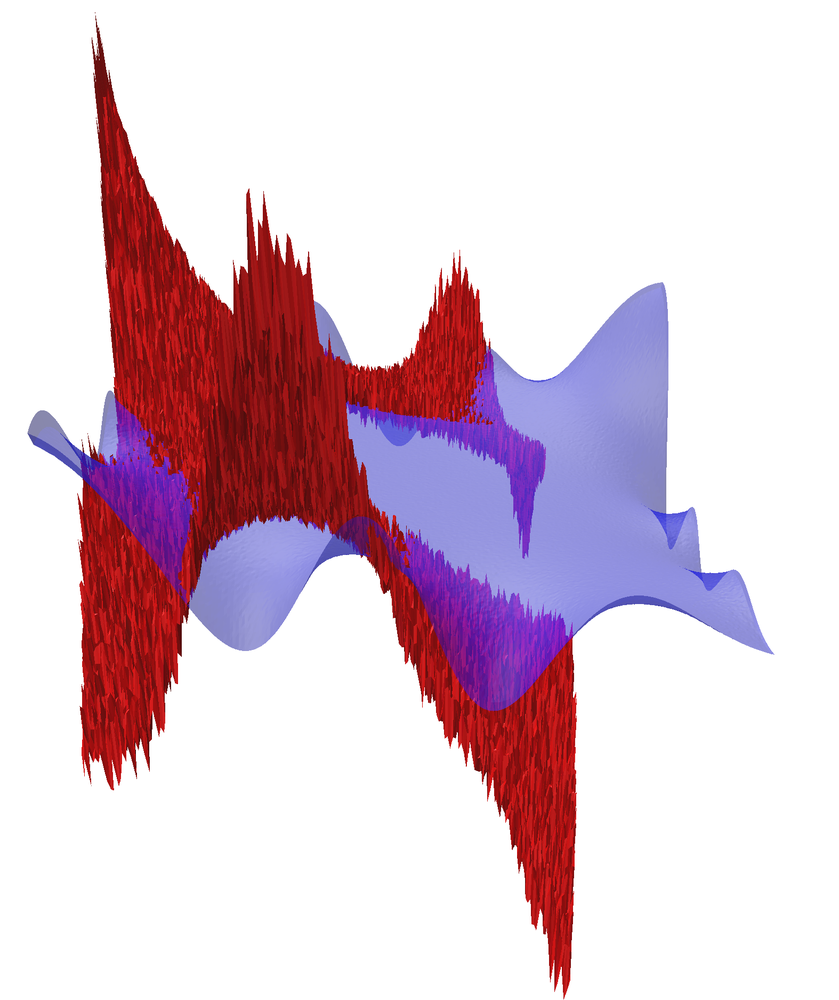}
\includegraphics[width=0.3\linewidth]{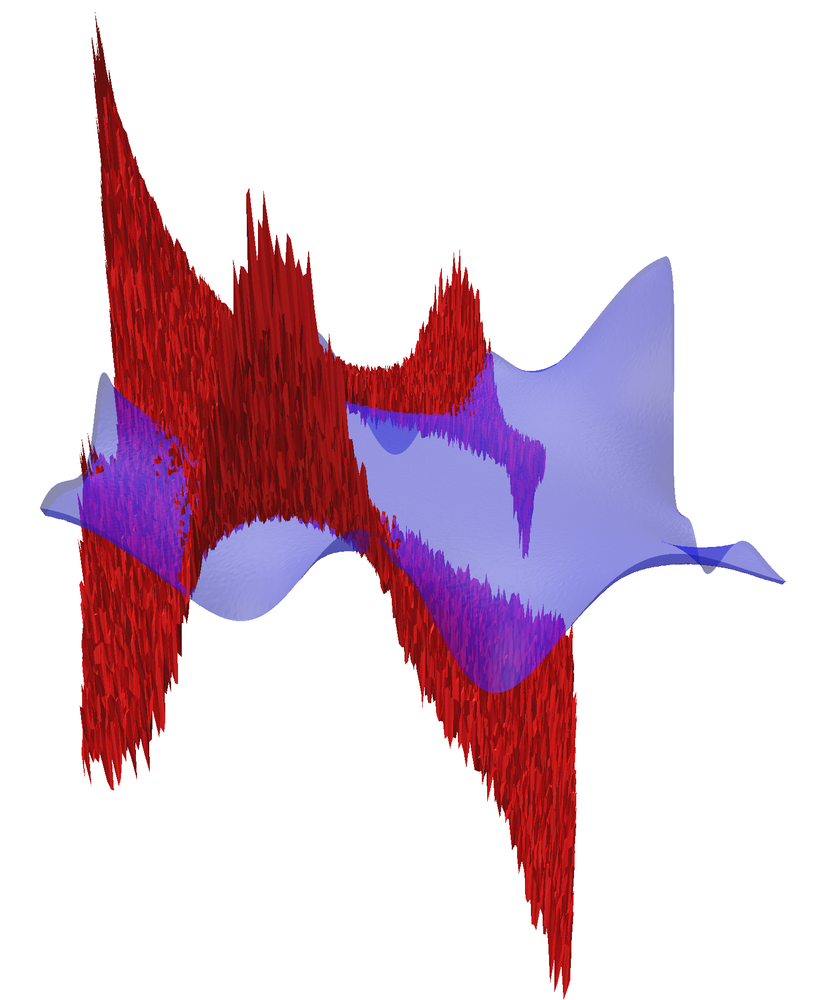}
\includegraphics[width=0.3\linewidth]{figures/invprobs_referencesolutions/nonlin_21coeffs_coeff13val10}
\caption{Optimization process over a 9-dimensional \emph{latent space} for a \emph{nonlinear} inverse problem with noisy data. Again, the operator network in the last row of Table~\ref{tab_ONtraininginfo_coeffs21} (21 input coefficients, 59049 output DoFs) is used, but here together with the ``sd = 0.15'' decoder from the right frame in Figure~\ref{fig_nonlindata_gauss_ldim3_gb7}. The decoder maps from a 9-dimensional latent space to a 21-dimensional coefficient space. The unperturbed data is shown in the first frame. The second frame is the same as the first but with added noise sampled from $\mcU(-0.05, 0.05)$. The last frame shows $\omega$ and the reference solution used for the data which was obtained by taking $p_{14} = 10$ and all other $p_n$'s = 0. The penultimate frame shows the optimization's MSE-converged reconstruction of the reference solution. The MSE converged after 212 iterations with the Adam optimizer with a step size = 0.1. This took 11.0 s on an Apple M1 CPU. The MSE's between the reference solution and the converged reconstruction are: on $\omega$ (used in optimization), $\text{MSE}_{\omega}$ = 1.99e-2; on the convex hull of $\omega$, $\text{MSE}_{\text{co}(\omega)}$ = 1.44e-2; and on its complement, $\text{MSE}_{\text{co}(\omega)^c}$ = 5.51e-2.}
\label{fig_nonlinnonlin_optprocess}
\end{figure}

\begin{table}
  \caption{Summary of optimization results for all \emph{nonlinear} inverse problems using an operator network. All problems have the same reference solution and use the operator network in the last row of Table~\ref{tab_ONtraininginfo_coeffs21} (21 input coefficients, 59049 output DoFs). The optimization processes for the problems are presented in Figures \ref{fig_nonlinlin_optprocess}, \ref{fig_nonlinnonlin_optprocess_unpertdec_unpertomega} -- \ref{fig_nonlinnonlin_optprocess}, respectively. In the table, ``Op'' means the operator network, ``dec'' means decoder, ``sd = x'' means what perturbation was added to the training data for the decoder, and ``$\delta_\omega$'' means that noisy data was used for the inverse problem.}
  \label{table:nonlinear_opt_res}
    \centering
    \begin{tabular}{lccccc}
    \toprule
    Configuration & Iterations & Avg iter time & $\text{MSE}_{\omega}$ & $\text{MSE}_{\text{co}(\omega)}$ & $\text{MSE}_{\text{co}(\omega)^c}$ \\
    \midrule
    Op, $\delta_{\omega}$ & 2843 & 4.93e-2 s & 8.36e-4 & 8.38e-4 & 1.91e-3 \\
    Op + dec ``sd = 0'' & 1481 & 4.65e-2 s & 2.22e-3 & 1.52e-3 & 4.83e-3 \\ 
    Op + dec ``sd = 0'', $\delta_{\omega}$ & 1018 & 4.67e-2 s & 3.07e-3 & 2.37e-3 & 5.25e-3 \\ 
    Op + dec ``sd = 0.15'', $\delta_{\omega}$ & 212 & 5.19e-2 s & 1.99e-2 & 1.44e-2 & 5.51e-2 \\ 
    \bottomrule
  \end{tabular}%
\end{table}

\section{Conclusions}
The regularization of severely ill-posed inverse problems using large data sets and stabilized finite element methods was considered and shown to be feasible both for linear and nonlinear problems. In the linear case, a fairly complete theory for the approach exists, and herein, we complemented previous work with the design and analysis of a reduced-order model. In the linear case, a combination of POD for the data reduction and reduced model method for the PDE-solution was shown to be a rigorous and robust approach that effectively can improve stability from logarithmic to linear in the case where the data is drawn from some finite-dimensional space of moderate dimension. To extend the ideas to nonlinear problems we introduced a machine learning framework, both for the data compression and the reduced model. After successful training, this resulted in a very efficient method for the solution of the nonlinear inverse problem. The main observations were the following:

\begin{enumerate}
\item The combination of analysis of the inverse problem, numerical analysis of finite element reconstruction methods, and data compression techniques allows for the design of robust and accurate methods in the linear case.
\item Measured data can be used to improve stability, provided a latent data set of moderate size can be extracted from the data cloud.
\item Machine learning can be used to leverage the observations in the linear case to nonlinear inverse problems and data assimilation and results in fast and stable reconstruction methods.
\end{enumerate}

The main open questions are related to how the accuracy of the machine learning approach can be assessed and controlled through network design and training. For recent work in this direction, we refer to \cite{MR4716388}.
\bigskip
\paragraph{Acknowledgements.} This research was supported in part by the Swedish Research
Council Grant, No. \ 2021-04925, and the Swedish
Research Programme Essence. EB acknowledges funding from EPSRC grants EP/T033126/1 and EP/V050400/1.

The GPU computations were enabled by resources provided by the National Academic Infrastructure for Supercomputing in Sweden (NAISS), partially funded by the Swedish Research Council through grant agreement No.\ 2022-06725.

\bibliographystyle{habbrv}
\footnotesize{
\bibliography{inverse}
}

\bigskip
\bigskip
\noindent
\footnotesize {\bf Authors' addresses:}

\smallskip
\noindent
Erik Burman,  \quad \hfill \addressuclshort\\
{\tt e.burman@ucl.ac.uk}

\smallskip
\noindent
Mats G. Larson,  \quad \hfill \addressumushort\\
{\tt mats.larson@umu.se}

\smallskip
\noindent
Karl Larsson,  \quad \hfill \addressumushort\\
{\tt karl.larsson@umu.se}

\smallskip
\noindent
Carl Lundholm, \quad \hfill \addressumushort\\
{\tt carl.lundholm@umu.se}

\end{document}